\newtheorem{theorem}{Theorem}[section]
\newtheorem{lemma}[theorem]{Lemma}
\newtheorem{proposition}[theorem]{Proposition}
\newtheorem{corollary}[theorem]{Corollary}
\newtheorem{predefinition}[theorem]{Definition}
\newenvironment{definition}{\begin{predefinition}\rm}{\end{predefinition}}
\newtheorem{preremark}[theorem]{Remark}
\newenvironment{remark}{\begin{preremark}\rm}{\end{preremark}}
\newtheorem{prenotation}[theorem]{Notation}
\newenvironment{notation}{\begin{prenotation}\rm}{\end{prenotation}}
\newtheorem{preexample}[theorem]{Example}
\newenvironment{example}{\begin{preexample}\rm}{\end{preexample}}
\newtheorem{preclaim}[theorem]{Claim}
\newtheorem{prequestion}[theorem]{Question}
\newtheorem{preapplication}[theorem]{Application}
\theoremstyle{remark}
\numberwithin{equation}{section}
\newcommand \ZZ {{\mathbb Z}}
\newcommand \QQ {{\mathbb Q}}
\newcommand \NN {{\mathbb N}}
\newcommand  \FF {{\mathbb F}}
\newcommand \GG {{\mathbb G}}
\newcommand \HH {{\mathbb H}}
\newcommand \EE {{\mathbb E}}
\newcommand \PP {{\mathbb P}^1}
\newcommand  \Frob {{\boldsymbol F}}
\newcommand  \Ver {{\boldsymbol V}}
\global\let\hom\undefined
\DeclareMathOperator{\hom}{Hom}
\global\let\ker\undefined
\DeclareMathOperator{\ker}{Ker}
\DeclareMathOperator{\Res}{Res}
\DeclareMathOperator{\GL}{GL}
\DeclareMathOperator{\Lie}{Lie}
\DeclareMathOperator{\disc}{disc}
\DeclareMathOperator{\perf}{perf}
\newcommand \CA {{\mathcal A}}
\newcommand \CM {{\mathcal M}}
\newcommand \cO {{\mathcal O}}
\newcommand \cS {{\mathcal S}}
\newcommand \CT {{\mathcal T}}
\newcommand \Sh {{\rm Sh}}
\newcommand \codim {{\rm codim}}
\newcommand \CX {{\mathcal X}}
\newcommand \frf {{\mathfrak f}}
\newcommand \CO{{\mathfrak O}}
\newcommand \p{{\mathfrak p}}
\newcommand \fd{{\mathfrak d}}
\newcommand \CC {{\mathbb C}}
\newcommand \RR {{\mathbb R}}
\newcommand \hh {{\mathfrak h}}
\newcommand \cf {{\mathfrak f}}
\newcommand \bA {{\mathbb A}}
\newcommand \QM {{\QQ[\mu_m]}}
\newcommand \sss {{\rm ss}}
\newcommand \co {{\mathfrak o}}
\newcommand \CJ {{\mathcal J}}
\newcommand{\bfp}{{\overline{\mathbb F}_p}}
\newcommand{\E}{{E}}
\title[Newton polygons from special families of cyclic covers]{Newton polygons arising from special families of cyclic covers of the projective line}
\date{}
\author{Wanlin Li}
\address{Department of Mathematics,
University of Wisconsin,
Madison, WI 53706, USA}
\email{wanlin@math.wisc.edu}
\author{Elena Mantovan}
\address{Department of Mathematics,
California Institute of Technology
Pasadena, CA 91125, USA}
\email{mantovan@caltech.edu}
\author{Rachel Pries}
\address{Department of Mathematics, 
Colorado State University, 
Fort Collins, CO 80523, USA}
\email{pries@math.colostate.edu}
\author{Yunqing Tang}
\address{Department of Mathematics,
Princeton University,
Princeton, NJ 08540, USA}
\email{yunqingt@math.princeton.edu}
\begin{document}

\begin{abstract}
By a result of Moonen, there are exactly 20 positive-dimensional families of cyclic covers of the projective line
for which the Torelli image is open and dense in the associated Shimura variety.
For each of these, we compute the Newton polygons, and the $\mu$-ordinary Ekedahl--Oort type, 
occurring in the characteristic $p$ reduction of the Shimura variety.
We prove that all but a few of the Newton polygons appear on the open Torelli locus.
As an application, we produce multiple new examples of Newton polygons and Ekedahl--Oort types
of Jacobians of smooth curves in characteristic $p$.
Under certain congruence conditions on $p$, these include:
the supersingular Newton polygon for genus $5,6,7$;
fourteen new non-supersingular Newton polygons for genus $5-7$; 
eleven new Ekedahl--Oort types for genus $4-7$ and,
for all $g \geq 6$, the Newton polygon with $p$-rank $g-6$ with slopes $1/6$ and $5/6$.

MSC10: primary 11G18, 11G20, 11M38, 14G10, 14G35; secondary 11G10, 14H10, 14H30, 14H40, 14K22


 










Keywords: curve, cyclic cover, Jacobian, abelian variety, Shimura variety, PEL-type, moduli space, reduction, $p$-rank, 
supersingular, Newton polygon, $p$-divisible group, Kottwitz method, Dieudonn\'e module, Ekedahl--Oort type.
\end{abstract}

\maketitle

\section{Introduction}

A fundamental problem in arithmetic geometry is to determine which abelian varieties arise as Jacobians of (smooth) curves. This question is equivalent to studying (the interior of) the Torelli locus in certain Siegel varieties.  
For $g=1,2,3$, the Torelli locus is open and dense in the moduli space 
${\mathcal A}_g$ of principally polarized abelian varieties of 
dimension $g$.
For any family $T$ of cyclic covers of the projective line, 
Deligne and Mostow construct the smallest PEL-type Shimura variety containing the image of $T$ under the Torelli morphism \cite{deligne-mostow}.  
In \cite{moonen}, Moonen shows that there are precisely twenty positive-dimensional families of cyclic covers of 
${\mathbb P}^1$ for which {the image of $T$ 
under the Torelli morphism} is open and dense in the associated Shimura variety; these families are called special. 

In positive characteristic $p$, the $p$-rank, Newton polygon, and Ekedahl--Oort type are 
discrete invariants of an abelian variety.  
It is a natural question to ask which of these invariants can be realized by the Jacobian of a smooth curve.  
As each discrete invariant yields a stratification of the reduction modulo $p$ of 
the Siegel variety, 
this question is equivalent to understanding which strata intersects the interior of the Torelli locus.  

Ultimately, the goal is to understand the geometry of the induced stratifications of the Torelli locus 
(e.g., the connected components of each stratum and their closures), in the same way 
that the geometry of these stratifications is understood for Siegel varieties.
For example, for any prime $p$, genus $g$ and integer $f$ such that $0 \leq f \leq g$, Faber and van der Geer prove in \cite{FVdG}
that the $p$-rank $f$ stratum is non-empty and has the expected codimension $g-f$ in the moduli space ${\mathcal M}_g$ of curves of genus $g$.
See also \cite{AP:mono}, \cite{AP:hyp}, and \cite{glasspries}.  

Beyond genus $3$, much less is known about the Newton polygon, 
more precisely the Newton polygon of the characteristic polynomial of Frobenius.  
In \cite[Expectation 8.5.4]{oort05}, for $g \geq 9$,
using a dimension count, Oort observed that the interior of the 
Torelli locus is unlikely to intersect the supersingular locus or other 
Newton polygon stratum of high codimension in ${\mathcal A}_g$.
This suggests that it is unlikely for all Newton polygons to occur for Jacobians of smooth curves for each prime $p$.

In our earlier work \cite{LMPT}, we studied Newton polygons of cyclic covers of the projective line branched at $3$ points.
In this case, the Jacobian is an abelian variety with complex multiplication and its Newton polygon can be computed using the Shimura--Taniyama theorem. 
We used this to find several new Newton polygons having $p$-rank $0$ 
which occur for Jacobians of smooth curves.

The Newton polygon stratification 
of PEL-type Shimura varieties is well understood 
by the work of Viehmann and Wedhorn \cite{viehmann-wedhorn}, based on earlier work of Kottwitz, Rapoport, and Richartz \cite{kottwitz-rapoport}, \cite{kottwitz1}, \cite{kottwitz2}, \cite{rapoport-richartz}.
In this paper, we demonstrate this theory by determining all the Newton polygons of Jacobians that occur for the 20 special families of cyclic covers from Moonen's paper \cite{moonen}.  Furthermore, using \cite{moonenST},
we compute the $\mu$-ordinary mod $p$ Dieudonn\'e modules (e.g., Ekedahl--Oort types) for the 20 special families.

We then investigate which Newton polygons and Dieudonn\'e modules on these lists arise for a smooth curve in the family. 
We conclude affirmatively in three distinct cases: \\
(i) for the $\mu$-ordinary Newton polygon and Dieudonn\'e module, Proposition~\ref{muord};\\
(ii) for any PEL-indecomposable Newton polygon, Proposition \ref{indecomp}; and \\
(iii) for the Newton polygon of the closed stratum (called the basic Newton polygon) if the family has dimension $1$
and if $p$ is sufficiently large, Proposition~\ref{basic}.

For (iii), we refer to Theorem \ref{prop_irredcomp} in the Appendix, where we prove, 
under some restrictions on the prime $p$,
that the number of geometrically irreducible components of the basic locus at $p$ of certain unitary Shimura variety grows to infinity with $p$. 
More precisely, Theorem \ref{prop_irredcomp} holds for unitary Shimura varieties of signature $(1,n-1)$ at one real place 
and definite signature at all other real places. In particular, our results hold for the simple Shimura varieties considered by Harris--Taylor in \cite{HT}. 
Earlier instances of such a statement are due to Vollaard \cite{vollaard}, and Vollaard--Wedhorn \cite{vollaardwedhorn}
who prove a similar result under the further assumption that the implicit CM field defining the unitary group is quadratic-imaginary, and to Liu--Tian--Xiao--Zhang--Zhu \cite {XZ2} at primes which are totally split in the CM field. Here, we only exclude the case of primes which are inert in the quadratic imaginary extension for $n$ even. 
We regard Theorem \ref{prop_irredcomp} to be of independent interest.

We calculate that these three criteria together with the de Jong--Oort purity result 
imply that all but a few of the Newton polygons for Moonen's special families 
occur for Jacobians of smooth curves. 
More precisely, we prove the following result.

\begin{theorem} (See Theorem \ref{onemore})
Let $(m,N,a)$ denote the monodromy datum for one of Moonen's special families from \cite[Table 1]{moonen}.  Assume $p \nmid m$.   
Let $\nu \in \nu(B(\mu_m, \cf))$ be a Newton polygon occuring on the closure in ${\mathcal A}_g$
of the image of the family under the Torelli morphism. Then $\nu$
occurs as the Newton polygon of the Jacobian of a smooth curve in the family unless either:
\begin{enumerate}
\item the dimension of the family is one, $\nu$ is supersingular, and $p$ is not sufficiently large; or
\item the dimension of the family is at least two and $\nu$ is supersingular.
\end{enumerate}
\end{theorem}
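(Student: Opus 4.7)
The strategy is to combine Propositions \ref{muord}, \ref{indecomp}, and \ref{basic} with the de Jong--Oort purity theorem to propagate the existence of Newton polygons from the $\mu$-ordinary stratum downward through the specialization poset on $\nu(B(\mu_m, \cf))$ until every non-basic polygon on the closure of the Torelli image has been realized by a smooth curve, leaving only the basic stratum to be treated directly.

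First, by Proposition \ref{muord}, the $\mu$-ordinary polygon $\nu_\mu$ occurs for the Jacobian of a smooth curve in each of Moonen's twenty special families; since $\nu_\mu$ is the unique maximal element of the specialization poset and corresponds to the generic point of the associated Shimura variety, it serves as the base case for a downward induction. For the inductive step, let $X$ denote the open Torelli locus, viewed inside the reduction modulo $p$ of the Shimura variety $\Sh(\mu_m,\cf)$. Since the monodromy datum is special, $X$ is open in this Shimura variety, so the two share a common Newton stratification on their common open part. The de Jong--Oort purity theorem then says that the complement of any open Newton stratum $X^{\nu'}$ inside its closure in $X$ is either empty or of pure codimension one; matching this codimension-one boundary against the explicit codimension-one covers of the Viehmann--Wedhorn stratification of $\Sh(\mu_m,\cf)$ yields the descent: if $\nu'$ appears on $X$ and $\nu$ is covered by $\nu'$ in the specialization poset, then $\nu$ also appears on $X$. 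Iterating from $\nu_\mu$, every non-basic $\nu \in \nu(B(\mu_m,\cf))$ lying on the closure of the Torelli image is reached in finitely many codimension-one steps.

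It remains to handle the basic polygon $\nu_b$. If $\nu_b$ is PEL-indecomposable, Proposition \ref{indecomp} puts it on the open Torelli locus. If the family has dimension one and $p$ is sufficiently large, Proposition \ref{basic}, resting on the irreducible-component count of Theorem \ref{prop_irredcomp}, does the same. Any other configuration falls into exception (1) or (2), for which the statement makes no claim. The main obstacle in this plan is the purity step: I must verify, for each of the twenty monodromy data $(m,N,a)$ in Moonen's table, that the specialization poset $\nu(B(\mu_m,\cf))$ is connected \emph{away from} $\nu_b$ by a chain of codimension-one covers, so the inductive descent never needs to pass through the basic stratum itself. This reduces to an explicit combinatorial check on the Kottwitz set $B(G,\mu)$ for the unitary group attached to $(\mu_m,\cf)$, using Moonen's tables together with the PEL-decomposition analysis carried out for Proposition \ref{indecomp}; carrying out this case-by-case verification, and confirming that it dovetails with the large-$p$ hypothesis in Proposition \ref{basic}, is the technical heart of the argument.
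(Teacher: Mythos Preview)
Your overall plan invokes the same three propositions and de Jong--Oort purity as the paper, but the way you deploy purity is backwards, and this is a genuine gap.

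You claim that purity lets you descend: ``if $\nu'$ appears on $X$ and $\nu$ is covered by $\nu'$ in the specialization poset, then $\nu$ also appears on $X$.'' This does not follow. Purity says the complement of the open Newton stratum $Z^0[\nu']$ inside its closure in $Z^0$ is \emph{either empty or} pure of codimension one; it does not rule out the empty case. Matching against the Viehmann--Wedhorn stratification of the ambient Shimura variety $S$ does not help: that stratification tells you each $S[\nu]$ is nonempty, but says nothing about whether $S[\nu]$ meets the open subset $Z^0$. Since the boundary $\partial Z=Z\setminus Z^0$ also has codimension one in $Z$, an entire codimension-one Newton stratum of $S$ can a priori lie inside $\partial Z$. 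This is precisely the obstruction that Proposition~\ref{basic} is designed to overcome in the one-dimensional case, and your inductive step faces the same obstruction at every level.

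The paper runs purity in the opposite direction. When $\dim Z\geq 2$ and $\nu_b$ is not supersingular (which by the tables happens exactly when $p\equiv 1\bmod m$), the paper first checks via Lemma~\ref{decomp} that $\nu_b$ is PEL-indecomposable, so Proposition~\ref{indecomp} puts $\nu_b$ on $Z^0$. With both $Z^0[\nu_o]$ and $Z^0[\nu_b]$ nonempty, and observing that $\nu(B(\mu_m,\cf))$ is totally ordered with $\codim(Z^0[\nu_b],Z^0)=\#\nu(B(\mu_m,\cf))-1$, purity then forces every intermediate stratum to be nonempty: it interpolates between known endpoints rather than extrapolating downward from the top. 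When $\dim Z\geq 2$ and $\nu_b$ is supersingular, the tables show the only family with more than two polygons is $M[10]$ at $p\equiv 2\bmod 3$; there the single intermediate polygon $(1/4,3/4)$ is itself PEL-indecomposable, so Proposition~\ref{indecomp} handles it directly and no purity step is needed. The case-by-case work you anticipate is therefore not about the shape of the Kottwitz poset but about checking PEL-indecomposability of the \emph{basic} (or, for $M[10]$, the intermediate) polygon so that purity has an anchor at the bottom.
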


In cases (1) and (2), we also expect that there exists a smooth curve in the family which is supersingular;
(we prove this for large $p$ in \cite[Theorem 6.1]{LMPT3}).

We deduce the following applications.

First, we verify the existence of supersingular curves of genus $5-7$
for primes satisfying certain congruence conditions.  See \cite[Theorem 1.1]{LMPT} for a complementary result.

\begin{theorem}[See Theorem \ref{Tapp1}]\label{Tintro1}
There exists a smooth supersingular curve of genus $g$ defined over $\overline{\FF}_p$ 
for all sufficiently large primes satisfying the given congruence condition:
\\
$g=5$ when $p \equiv 7 \bmod 8$; $g=6$ when $p \equiv 2,3,4 \bmod 5$; and $g=7$ when $p \equiv 2 \bmod 3$.
\end{theorem}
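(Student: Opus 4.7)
The plan is to deduce this from the main theorem by exhibiting, for each of the three genera, a one-dimensional family on Moonen's list whose basic (closed) Newton polygon stratum is supersingular, and then invoking criterion (iii) from the earlier discussion.

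First, I would locate in \cite[Table 1]{moonen} the special families of dimension one whose generic Jacobian has genus $5$, $6$, and $7$. A dimension-one special family corresponds to a cover branched at $N=4$ points, with the genus of a smooth cover computed from the monodromy datum $(m,N,a)$ via Riemann--Hurwitz. For $g=5$, I would take the family with $m=8$; for $g=6$, the family with $m=5$; and for $g=7$, the family with $m=3$. The congruence conditions in the statement are precisely the conditions under which $p$ is inert (or otherwise has maximally non-split Frobenius action) in $\mathbb{Z}[\mu_m]$, namely $p \equiv 7 \bmod 8$, $p \equiv 2,3,4 \bmod 5$, and $p \equiv 2 \bmod 3$ respectively.

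Second, I would compute the Newton polygon stratification of the associated Shimura variety via the Kottwitz method applied to the local datum attached to $(m,N,a)$ at $p$. Under the stated congruence on $p$, I expect to verify that the basic Newton polygon in $\nu(B(\mu_m,\cf))$ is supersingular, i.e., consists entirely of slopes $1/2$. This is a finite combinatorial check, using that inertness of $p$ in $\mathbb{Z}[\mu_m]$ forces the Frobenius orbits on the set of characters indexing the Newton slopes to mix the $\cf$-signature symmetrically.

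Third, I would apply criterion (iii), namely Proposition~\ref{basic} (which relies on Theorem~\ref{prop_irredcomp} of the Appendix), to conclude that for $p$ sufficiently large the basic Newton polygon, being supersingular in these cases, is realized by the Jacobian of a smooth curve in the family. Since the Jacobian of such a smooth curve in the family has the prescribed genus $g$, this produces the desired smooth supersingular curve over $\overline{\FF}_p$.

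The main obstacle is the combinatorial verification in the second step: one must identify the correct monodromy datum for each of the three genera and confirm that the basic stratum is indeed supersingular under precisely the stated congruence condition. The rest is essentially bookkeeping once the right families have been pinned down and the main theorem is in hand.
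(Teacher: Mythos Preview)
Your overall strategy is exactly the one the paper uses: locate a one-dimensional special family (so $N=4$) of the right genus, check from the tables that the basic Newton polygon is supersingular under the stated congruence, and then apply Proposition~\ref{basic}. However, two of your three family identifications are wrong, and the error stems from the heuristic that the congruence condition reflects $p$ being inert in $\ZZ[\mu_m]$.

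For $g=6$ there is no one-dimensional special family with $m=5$: the only $m=5$ families on Moonen's list are $M[11]$ (with $N=4$, $g=4$) and $M[16]$ (with $N=5$, $g=6$, dimension $2$). The paper uses $M[18]$, which has $m=10$; the condition $p\equiv 2,3,4\bmod 5$ for odd $p$ is exactly $p\equiv 3,7,9\bmod 10$, and the tables show the basic polygon is $ss^6$ in each of these classes. For $g=7$ there is no special family with $m=3$ of genus~$7$ (the $m=3$ families have $g\le 4$). The paper uses $M[19]$ with $m=9$ (and also cites $M[20]$ with $m=12$): the condition $p\equiv 2\bmod 3$ is the same as $p\equiv 2,5,8\bmod 9$, and for these classes the basic polygon of $M[19]$ is $ss^7$. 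Note also that your inertness heuristic is not quite right even qualitatively: for instance $p\equiv 4\bmod 5$ has order~$2$, not~$4$, in $(\ZZ/5\ZZ)^*$, and $p\equiv 7\bmod 8$ has order~$2$ in $(\ZZ/8\ZZ)^*$; the actual mechanism is that the Frobenius orbits and signatures conspire to make $\nu_b$ supersingular, which one reads off from the tables rather than from inertness alone.
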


The second application is in Theorems \ref{TmuordNP} and \ref{TbasicNP}: 
under certain congruence conditions on $p$, 
we verify that 14 new non-supersingular Newton polygons and 11 new Dieudonn\'e modules occur 
for Jacobians of smooth curves of genus $4-7$.

Every abelian variety is isogenous to a factor of a Jacobian.  
Thus every $\lambda \in [0,1] \cap {\mathbb Q}$ is known to occur as a 
slope for the Newton polygon of the Jacobian of a smooth curve. 
Usually, however, there is no control over the other slopes.
In the third application, for $p \equiv 2,4 \bmod 7$ and $g \geq 6$, we show that the slopes $1/6$ and $5/6$ occur 
for a smooth curve of genus $g$ defined over $\overline{\FF}_p$ with the other slopes of the Newton polygon 
all $0$ and $1$, Theorem~\ref{Tapp2}.

As a final application, we consider one non-special family of curves in Section \ref{Snonsp}.
Consider a prime $m \equiv 3 \bmod 4$ (with $m \not = 3$) and a prime $p$ which is a quadratic 
non-residue modulo $m$. 
In Theorem \ref{last}, we find smooth curves of genus $g_m=(m-5)(m-1)/4$ 
defined over $\overline{\FF}_p$ whose Newton polygon only has slopes $0$, $1/2$, and $1$
and such that the multiplicity of the slope $1/2$ is at least $2\sqrt{g_m}$.

In future work, we solve questions about Newton polygons for curves of arbitrarily large genera 
using a new induction argument for Newton polygons 
of cyclic covers of ${\mathbb P}^1$.
We use the Newton polygons from this paper as base cases in that induction process. 

\subsection*{Organization of the paper} \mbox{ } \\

Section \ref{sec_prelim} recalls basic definitions and facts about group algebras, families of cyclic covers of ${\mathbb P}^1$, Newton polygons, and mod $p$ Dieudonn\'e modules.

Section \ref{SShimura} contains information about Shimura varieties and positive-dimensional special families of cyclic covers
of the projective line.

In Section \ref{sec_posdim}, we study the Newton polygon stratification of PEL-type Shimura varieties. 
We demonstrate the general theory by computing the Newton polygons for several prototypical 
examples from \cite{moonen} in Section \ref{ScomputeNP}.  
In Section \ref{EO}, we determine the $\mu$-ordinary Dieudonn\'e modules (Ekedahl--Oort types) for the 
special families.  

Section \ref{sec_smooth} discusses which Newton polygon strata intersect the smooth Torelli locus.

Section \ref{sec_table} contains tables of data, 
including the Newton polygons and the $\mu$-ordinary Dieudonn\'e modules 
for each of the twenty special families of cyclic covers 
from \cite{moonen}.

Section \ref{Sapplication} contains the proofs of the main theorems. 

In the appendix, we compute a lower bound for the number of irreducible components of the basic locus of simple Shimura varieties.

\subsection*{Acknowledgements} \mbox{ } \\
This project began at the \emph{Women in Numbers 4} workshop at the Banff International Research Station.
Pries was partially supported by NSF grant DMS-15-02227. We thank Liang Xiao, Xinwen Zhu, and Rong Zhou for discussions about the appendix and thank Liang Xiao for the detailed suggestions on the writing of the appendix.  We would like to thank the referee for many helpful comments.

\section{Notation and background} \label{sec_prelim} 

\subsection{The group algebra $\QM$}\label{prelim_gpalg} \mbox{ } \\

For an integer $m\geq 2$, let $\mu_m:=\mu_m(\CC)$ denote the group of $m$-th roots of unity in $\CC$. 
For each positive integer $d$, we fix a primitive $d$-th root of unity $\zeta_d=e^\frac{2\pi i}{d}\in\CC$.
Let $K_d=\QQ(\zeta_d)$ be the $d$-th cyclotomic field over $\QQ$ of degree $\phi(d)$.

Let $\QM$ denote the group algebra of $\mu_m$ over $\QQ$.  
Then $\QM=\prod_{0<d\mid m}K_d$.
The involution $*$ on $\QM$ induced by the inverse map on $\mu_m$ preserves each cyclotomic factor $K_d$, and for each $d\mid m$, the restriction of $*$ to $K_d$ agrees with complex conjugation.

Set $\CT:={\rm Hom}_\QQ(\QM,\CC)$.
Each $(\QM\otimes_\QQ\CC)$-module $W$ has an eigenspace decomposition $W=\oplus_{\tau\in\CT} W_\tau$, where $W_\tau$ is the subspace of $W$ on which $a\otimes 1\in\QM\otimes_\QQ\CC$ acts as $\tau(a)$.
We fix an identification $\CT=\ZZ/m\ZZ$ by defining, for all $n\in \ZZ/m\ZZ$, \[\tau_n(\zeta):=\zeta^n, \text{ for all }\zeta\in\mu_m.\]
For all $n\in\ZZ/m\ZZ$, and $a\in\QM$, ${\tau}_{-n}(a)=\tau_{n}(a^*)$.
We write $\tau_n^*:=\tau_{-n}$.

For $\tau = \tau_n \in\CT$, the {\em order of} $\tau$ is the
order of $n$ in $\ZZ/m\ZZ$.
The homomorphism $\tau:\QM\to\CC$ factors via
$K_d$ if and only if $\tau$ has order $d$.

For each rational prime $p$, we fix an algebraic closure $\overline{\QQ}_p^{\rm alg}$of $\QQ_p$, 
and an identification $\iota: \CC\simeq \CC_p$, where $\CC_p$ denotes the $p$-adic completion of  $\overline{\QQ}_p^{\rm alg}$.  
Let
$\overline{\QQ}_p^{\rm un}$ denote the maximal unramified extension of $\QQ_p$ in  $\overline{\QQ}_p^{\rm alg}$, 
and by $\sigma$ the Frobenius of $\overline{\QQ}_p^{\rm un}$. 

If $p \nmid m$, then $\QM$ is unramified at $p$. 
Via $\iota$, we identify
$\CT={\rm Hom}_\QQ(\QM,\overline{\QQ}_p^{\rm un})$.
Hence, there is a natural action of $\sigma$ on $\CT$, defined by $\tau\mapsto \tau^\sigma:=\sigma\circ \tau$. 
Note that $\tau_n^\sigma=\tau_{pn}$ for all $n\in\ZZ/m\ZZ$. 
For each $\sigma$-orbit $\co\subseteq \CT$, the order of $\tau$ is the same for all $\tau\in \co$. 
We write $d_\co$ for the order of $\tau\in \co$.

Let $\CO$ be the set of $\sigma$-orbits $\co$ in $\CT$.  For each $\tau\in\CT$, let $\co_\tau$ denote
its $\sigma$-orbit. 
Each $\tau: \QM\rightarrow \overline{\QQ}_p^{\rm un}$ determines a prime $\p$ of $\QM$ above $p$, depending only on the orbit $\co_\tau$. Hence,
the set $\CO$ is in bijection with the set of primes $\p$ of $\QM$ above $p$. 
For an orbit $\co \in \CT$, 
we write $\p_\co$ for the associated prime above $p$ and
and $K_{d_\co, \p_\co}$ for the completion of $K_{d_\co}$ along the prime $\p_\co$.

\subsection{Families of cyclic covers of the projective line}\label{prelim_curve} \mbox{ } \\

We follow \cite[\S\S2,3]{moonen}.
Fix integers $m\geq 2, N\geq 3$ and an $N$-tuple of positive integers $a=(a(1),\dots, a(N))$.
Then $a$ is an {\em inertia type} for $m$ and  
$(m,N,a)$ is a {\em monodromy datum} if
\begin{enumerate}
\item $a(i)\not\equiv 0\bmod m$, for all $i=1, \dots, N$, 
\item $\gcd(m, a(1),\dots, a(N))=1$, and
\item $\sum_ia(i)\equiv 0 \bmod m$.
\end{enumerate}

Fix a monodromy datum $(m,N,a)$. 
Working over $\ZZ[1/m]$, let $U\subset ({\mathbb A}^1)^N$ be the complement of the weak diagonal, 
which consists of points where any two of the coordinates are the same. 
Consider the smooth projective (relative) curve $C$ over $U$ whose fiber at each point $t=(t(1),\dots, t(N))\in U$ has affine model 
\begin{equation} \label{EformulaC}
y^m=\prod_{i=1}^N(x-t(i))^{a(i)}.
\end{equation}
The function $x$ on $C$ yields a map $C \to {\mathbb P}^1_U$
and there is a $\mu_m$-action on $C$ over $U$ given by $\zeta\cdot (x,y)=(x,\zeta\cdot y)$ for all $\zeta\in\mu_m$.
Thus $C \to {\mathbb P}^1_U$ is a $\mu_m$-Galois cover. 

For a point $t\in U$, the cover $C_t\to \PP$ is a $\mu_m$-Galois cover, branched at $N$ points $t(1),\dots , t(N)$ in $\PP$, and 
with local monodromy $a(i)$ at $t(i)$.
By the hypotheses on the monodromy datum, the fibers of $C\to U$ are geometrically irreducible curves of genus $g$, where
\begin{equation} \label{Egenus}
g=g(m,N,a)=1+\frac{(N-2)m-\sum_{i=1}^N\gcd(a(i),m)}{2}.
\end{equation}

Let $\CM_g$ denote the moduli space of smooth projective curves of genus $g$
and let $\CA_g$ denote the Siegel moduli space of principally polarized abelian varieties of dimension $g$ over $\ZZ$.\footnote{When we talk about universal families on the stacks, it means that we pass to a suitable level structure for such universal objects to exist.}
The composition of the Torelli map with the morphism $U\to {\mathcal M}_g$ defined by the curve $C\to U$ yields a morphism over $\ZZ[1/m]$ denoted by
\[j=j(m,N,a):U\to {\mathcal M_g}\to \CA_g.\] 

\begin{definition} \label{DZmNa}
If $(m,N,a)$ is a monodromy datum, let $Z^0(m,N,a)$ be the image of $j(m,N,a)$ in $\CA_g$ and let
$Z(m,N,a)$ be its closure in $\CA_g$. 
\end{definition}

\begin{remark}
By definition, $Z(m,N,a)$ is a closed, reduced substack of $\CA_g$.  
It is also irreducible \cite[Corollary 7.5]{fultonhur}, \cite[Corollary 4.2.3]{wewersthesis}.

The substack $Z(m,N,a)$ depends uniquely on the equivalence class of the monodromy datum $(m,N,a)$, where 
$(m,N,a)$ and $(m',N',a')$ are equivalent if $m=m'$, $N=N'$, 
and the images of $a,a'$ in $(\ZZ/m\ZZ)^N$ are in the same orbit under
$(\ZZ/m\ZZ)^*\times {\rm Sym}_N$. 
\end{remark}

Fix a point $t\in U(\CC)$ and let $V$ be the first Betti cohomology group $H^1(C_t(\CC), \QQ)$. 
Then $V$ has a Hodge structure of type $(1,0)+(0,1)$ with the $(1,0)$ piece given by $H^0(C_t(\CC), \Omega^1_{C_t})$ 
via the Betti--de Rham comparison.   
Let $V^+$ (resp.\ $V^-)$ denote the $(1,0)$ (resp.\ $(0,1)$) piece. 

Then $V$, $V^+$, and $V^-$ are $\QM$-modules and there are decompositions
\[V\otimes_{\QQ}\CC=\oplus_{\tau \in \CT}V_\tau, \quad V^+=\oplus_{\tau\in \CT} V^+_\tau, \quad V^-=\oplus_{\tau\in \CT} V^-_\tau.\]
Set: 
\begin{equation} \label{Edefgftau}
g(\tau):=\dim_\CC V_\tau, \quad \cf(\tau):=\dim_{\CC} V^+_\tau. 
\end{equation}

For any $x\in \QQ$, let $\langle x\rangle$ denote the fractional part of $x$. 
By \cite[Lemma 2.7, \S3.2]{moonen} (or \cite{deligne-mostow}), 
\begin{equation}\label{DMeqn}
\cf(\tau_n)=\begin{cases} -1+\sum_{i=1}^N\langle\frac{-na(i)}{m}\rangle \text{ if $n\not\equiv 0 \bmod m$}\\
0\text{ if $n\equiv 0 \bmod m$}.\end{cases}
\end{equation}
The dimension $\cf(\tau_n)$ is independent of the choice of $t\in U$. 
The \emph{signature type} of the monodromy datum $(m,N,a)$ is 
\[\cf=(\cf(\tau_1), \ldots, \cf(\tau_{m-1})).\]

\begin{remark}\label{rmk_dimV}
For all $\tau\in \CT$, $\dim_\CC V^+_{\tau^*}=\dim_\CC V^-_{\tau}$ and thus $g(\tau)=\cf(\tau)+\cf(\tau^*)$. 
The value $g(\tau)$ is constant on the $\sigma$-orbit of $\tau$, 
so we sometimes write $g(\co)=g(\tau)$ for $\tau \in \co$.
\end{remark}

\subsection{Newton polygons}\label{prelim_NP} \mbox{ } \\

Let $X$ denote a $g$-dimensional abelian scheme over an algebraically closed field $\FF$ of positive characteristic $p$ (e.g., $X=\CX_x$, where $\CX$ is the universal abelian scheme on $\CA_g$, and $x\in \CA_g(\FF)$). 

If $\FF$ is an algebraic closure of $\FF_p$, the finite field of $p$ elements, then there exists a finite subfield $\FF_0\subset \FF$ such that $X$ is isomorphic to the base change to $\FF$ of an abelian scheme $X_0$ over $\FF_0$. Let $W(\FF_0)$ denote the Witt vector ring of $\FF_0$. 
Consider the action of Frobenius $\varphi$ on the crystalline cohomology group $H^1_{\rm cris}(X_0/W(\FF_0))$. There exists an integer $n$ such that $\varphi^n$, the composite of $n$ Frobenius actions, is a linear map on $H^1_{\rm cris}(X_0/W(\FF_0))$.
The {\em Newton polygon} $\nu(X)$ of $X$ is defined as the multi-set of rational numbers $\lambda$, called the {\em slopes},
such that $n\lambda$ are the valuations at $p$ of the eigenvalues of Frobenius for this action. Note that the Newton polygon is independent of the choice of $X_0$, $\FF_0$, and $n$. 
For a more general $\FF$, one can use the Dieudonn\'e--Manin classification to define the Newton polygon.

The {\em $p$-rank} of $X$ is the multiplicity of the slope $0$ in $\nu(X)$; it equals $\dim_{\FF_p}\hom(\mu_p,X)$.

If $\CA$ is an abelian variety or $p$-divisible group defined over a local field of mixed characteristic $(0,p)$, 
we write $\nu(\CA)$ for the Newton polygon of its special fiber.

For $\lambda \in \QQ \cap [0,1]$, the multiplicity $m_\lambda$ is the multiplicity of $\lambda$ 
in the multi-set; if
$c,d\in\NN$ are relatively prime integers such that $\lambda=c/(c+d)$, then $(c+d)$ divides $m_\lambda$. 
The Newton polygon is {\em symmetric} if $m_\lambda=m_{1-\lambda}$ for every $\lambda\in \QQ\cap [0,1]$. 
The Newton polygon is typically drawn as a lower convex polygon, 
with slopes equal to the values of $\lambda$ occurring with multiplicity $m_\lambda$.
The Newton polygon of a $g$-dimensional abelian variety $X$ is symmetric and, when drawn as a polygon,  
it has endpoints $(0,0)$ and $(2g,g)$ and integral break points. 

Let $ord$ denote the Newton polygon  $\{0,1\}$ and $ss$ denote the Newton polygon $\{1/2,1/2\}$.
Thus an ordinary (resp.\ supersingular) abelian variety of dimension $g$
has Newton polygon $ord^g$ (resp.\ $ss^g$).
For $s,t\in \NN$, 
with $s \leq t/2$ and ${\rm gcd}(s,t)=1$, let $(s/t, (t-s)/t)$ denote the Newton polygon with slopes $s/t$ and  $(t-s)/t$, each with multiplicity $t$. 

We denote the union of multi-sets by $\oplus$, and for any multi-set $\nu$, and $n\in\NN$, we write $\nu^n$ for $\nu\oplus \cdots \oplus \nu$, 
$n$-times. 
For convex polygons, we write $\nu_1\geq \nu_2$ if $\nu_1,\nu_2$ share the same endpoints and $\nu_1$ lies below $\nu_2$. 
Under this convention, the ordinary Newton polygon is maximal and the supersingular Newton polygon is minimal. 

\subsection{Dieudonn\'e modules modulo $p$ and Ekedahl--Oort types} \label{SnotDM}  \mbox{ } \\

The $p$-torsion $X[p]$ of $X$ is a symmetric ${\rm BT}_1$-group scheme (of rank $2g$) annihilated by $p$.
The \emph{$a$-number} of $X$ is ${\rm dim}_{\mathbb F}(\alpha_p, X[p])$, where $\alpha_p$ is the kernel of Frobenius on $\GG_a$.

Let ${\mathbb E}$ denote the non-commutative ring over $\overline{\FF}_p$,
generated by a $\sigma$-linear $F$ and $\sigma^{-1}$-linear $V$ with $FV=VF=0$.
The mod $p$ Dieudonn\'e module of $X$ is an $\EE$-module of finite dimension ($2g$).
Over $\FF$, 
there is an equivalence of categories between finite commutative group schemes of rank $2g$ annihilated by
$p$ and $\EE$-modules of dimension $2g$.  These can equivalently be classified by Ekedahl--Oort types.

Let $\EE(w)$ denote the left ideal of ${\mathbb E}$ generated by $w$.
The mod $p$ Dieudonn\'e module for an ordinary elliptic curve is isomorphic to
$L:=\EE/\EE(F, V-1) \oplus \EE/\EE(V, F-1)$; 
for a supersingular elliptic curve, it is isomorphic to $N_{1,1}:=\EE/\EE(F-V)$.

\section{Shimura varieties} \label{SShimura}

Let $(m,N,a)$ be a monodromy datum with $N \geq 4$, and $\cf$ the associated signature type given by (\ref{DMeqn}). In \cite{deligne-mostow}  Deligne and Mostow construct the smallest PEL-type Shimura variety containing $Z(m,N,a)$, which we will denote by $\Sh(\mu_m,\cf)$.
In this section, we recall the basic setting for PEL-type Shimura varieties, and the construction of \cite{deligne-mostow}.  We follow \cite{moonen}.

\subsection{Shimura datum for the moduli space of abelian varieties}\label{sec_Sh_GSp} \mbox{ } \\

Let $V=\QQ^{2g}$, 
and let $\Psi:V\times V\to\QQ$ denote the standard symplectic form.\footnote{In Section \ref{prelim_curve}, we use $V$ to denote $H^1(C_x, \QQ)$ for $x\in U(\CC)$. Here, by the convention of Deligne, $V$ is identified with $H_1(\CX_x,\QQ)$ for any $x\in \CA_g(\CC)$. The symplectic form will be identified with the one induced by the polarization on $\CX_x$. This symplectic form, along with Poincar\'e duality, induces a natural isomorphism $H^1(C_x, \QQ)(1)\cong H_1(C_x, \QQ)$. Hence the only difference between them is given by the Tate twist and we use the same notation $V$ for both.}
Let $G:={\rm GSp}(V,\Psi)$ denote the group of symplectic similitudes over ${\mathbb Q}$. 
Let $\hh$ denote the space of
homomorphisms $h:{\mathbb S}={\rm Res}_{\CC/\RR} \GG_m \to G_\RR$ which define a Hodge structure of type (-1,0)+(0,-1) on $V_\ZZ$ such that $\pm(2\pi i)\Psi$ is a polarization on $V$.
The pair $(G,\hh)$ is the Shimura datum for $\CA_g$.

Let $H\subset G$ be an algebraic subgroup over $\QQ$ such that the subspace
\[\hh_H:=\{h\in\hh\mid h \text{ factors through } H_\RR\}\] is non-empty. Then $H(\RR)$ acts on $\hh_H$ by conjugation, and for each $H(\RR)$-orbit $Y_H\subset \hh_H$, the Shimura datum $(H,Y_H)$ defines an algebraic substack $\Sh(H,Y_H)$ of $\CA_g$. 
In the following, for $h\in Y_H$, we sometimes write $(H,h)$ for the Shimura datum $(H,Y_H)$.
For convenience, we also write $\Sh(H,\hh_H)$ for the finite union of the Shimura stacks $\Sh(H,Y_H)$, as $Y_H$ varies among the $H(\RR)$-orbits in $\hh_H$.

\subsection{Shimura data of PEL-type}\label{sec_Sh_PEL} \mbox{ } \\

Now we focus on Shimura data of PEL-type. 
Let $B$ be a semisimple $\QQ$-algebra, together with an involution $*$. Suppose there is an action of $B$ on $V$ such that 
$\Psi(bv,w)=\Psi(v,b^*w)$, for all $b\in B$ and all $v,w\in V$.
Let
\[H_B:={\rm GL}_B(V)\cap {\rm GSp}(V,\Psi).\]  
We assume that $\hh_{H_B}\neq \emptyset$.

For each $H_B(\RR)$-orbit $Y_B:=Y_{H_B}\subset \hh_{H_B}$, the associated Shimura stack 
$\Sh(H_B,Y_B)$ arise as moduli spaces of polarized abelian varieties 
endowed with a $B$-action, and are called of PEL-type. In the following, we also write $\Sh(B):=\Sh(H_B,\hh_{H_B})$.

Each homomorphism $h\in Y_B$ defines a decomposition of $B_\CC$-modules \[V_\CC=V^+\oplus V^-\]
where $V^+$ (respectively, $V^-$) is the subspace of $V_\CC$ on which $h(z)$ acts by $z$ (respectively, by $\bar{z}$).
The isomorphism class of the $B_\CC$-module $V^+$ depends only on $Y_B$. Moreover, $Y_B$ is determined by the isomorphism class of $V^+$ as a $B_\CC$-submodule of $V_\CC$.
In the following, we prescribe $Y_B$ in terms of the $B_\CC$-module $V^+$. 
By construction, $\dim_\CC V^+=g$.

\subsection{Shimura subvariety attached to a monodromy datum} \label{sec_Sh_cyclic} \mbox{ } \\

We consider cyclic covers of the projective line branched at more than three points; fix a monodromy datum $(m,N,a)$ with $N\geq 4$. Take $B=\QQ[\mu_m]$ with involution $*$ as in Section~\ref{prelim_gpalg}.

As in Section \ref{prelim_curve}, let $C \to U$ denote the universal family of $\mu_m$-covers of ${\mathbb P}^1$ branched at $N$ points with inertia type $a$; let $j=j(m, N, a): U \to \CA_g$ be the composition of the Torelli map with the morphism $U \to \CM_g$.
From Definition \ref{DZmNa}, recall that $Z=Z(m,N,a)$ is the closure in $\CA_g$ of the image of $j(m,N,a)$. 

The pullback of the universal abelian scheme $\CX$ on $\CA_g$ via $j$ is the relative Jacobian $\CJ$ of $C\rightarrow U$. Since $\mu_m$ acts on $C$, there is a natural action of the group algebra $\ZZ[\mu_m]$ on $\CJ$. 
We also use $\CJ$ to denote the pullback of $\CX$ to $Z$.  The action of $\ZZ[\mu_m]$ extends naturally to $\CJ$ over $Z$.
Hence the substack $Z=Z(m,N,a)$ is contained in $\Sh(\QQ[\mu_m])$ for an appropriate choice of a structure of $\QQ[\mu_m]$-module on $V$.
More precisely, fix $x\in Z(\CC)$, and let $(\CJ_x,\theta)$ denote the corresponding Jacobian with its principal polarization $\theta$.  
Choose a symplectic similitude, meaning an isomorphism
\[\alpha: (H_1(\CJ_x,\QQ), \psi_\theta)\to (V ,\Psi),\]
such that the pull back of the symplectic form $\Psi$ to $H_1(\CJ_x, \QQ)$ 
is a scalar multiple of $\psi_\theta$,
where $\psi_\theta$ denotes the Riemannian form on $H_1(\CJ_x,\QQ)$ corresponding to the polarization $\theta$.
Via $\alpha$, the $\QQ[\mu_m]$-action on $\CJ_x$ induces an action on $V$. This action satisfies \[\hh_{\QQ[\mu_m]}\neq \emptyset, \text{ and }\Psi(bv,w)=\Psi(v,b^*w),\] for all $b\in \QQ[\mu_m]$, all $v,w\in V$,
and $Z\subset \Sh(\QQ[\mu_m])$.

The isomorphism class of $V^+$ as a $\QM\otimes_\QQ\CC$-module  is determined by and determines 
the signature type $\{\cf(\tau)=\dim V^+_\tau\}_{\tau \in \CT}$.
By \cite[2.21, 2.23]{deligne-mostow} (see also \cite[\S\S 3.2, 3.3, 4.5]{moonen}), 
the $H_\QM(\RR)$-orbit $Y_{\QQ[\mu_m]}$ in $\hh_{H_\QM}$ such that \[Z\subset \Sh(H_\QM, Y_\QM) \] corresponds to the isomorphism class of $V^+$ with $\cf$ given by \eqref{DMeqn}. From now on, since $\Sh(H_\QM, Y_\QM)$ depends only on $\mu_m$ and $\cf$, we denote it by $\Sh(\mu_m,\cf)$.

The irreducible component of $\Sh(\mu_m,\cf)$ containing $Z$ is the largest closed, reduced and irreducible substack $S$ of $\CA_g$ containing $Z$ such that the action of $\ZZ[\mu_m]$ on $\CJ$ extends to the universal abelian scheme over $S$. To emphasis the dependence on the monodromy datum, we denote this irreducible substack by $S(m,N,a)$.

\section{Newton polygons for special Shimura varieties}\label{sec_posdim}

In this section, we determine the Newton polygons and the $\mu$-ordinary Dieudonn\'e modules that occur 
for the special families of \cite[Table 1]{moonen}.
We refer to \cite[Section 3]{LMPT} for a review of the Shimura--Taniyama method for computing the Newton polygon of a cyclic cover of the projective line branched at exactly three points. 

\subsection{Newton polygon stratification for PEL-type Shimura varieties}\label{sec_VW} \mbox{ } \\

We recall some of the key results about the Newton polygon stratification for PEL-type Shimura varieties 
at unramified primes of good reduction from \cite{viehmann-wedhorn}.
Similar results are now known for abelian-type Shimura varieties by \cite{shen-zhang} and we refer to 
\cite{HeRapoport} for a survey of previous work.

Let $(H,h)$ denote a Shimura datum of PEL-type, with $H$ connected. Assume $p$ is an unramified prime of good reduction for $(H,h)$, 
then the Shimura variety $\Sh(H,h)$ has a smooth canonical integral model at $p$, and
we denote by $\Sh(H,h)_{\overline{\FF}_p}$ its special fiber (base changed to $\overline{\FF}_p$).
\footnote{More precisely, for $p$ an unramified prime for the datum $(H,h)$, the group 
$H_{\QQ_p}$ is the generic fiber of some reductive group $\mathcal{H}$ over $\ZZ_p$ with connected geometric fibers,
and we assume that the level at $p$ of the Shimura variety $\Sh(H,h)$ is the hyperspecial subgroup $\mathcal{H}(\ZZ_p)\subset H(\QQ_p)$. The condition 
for $p$ to be an unramified prime of good reduction for
$(H,h)$ and the definition of the canonical integral model of $\Sh(H,h)$ at $p$ can be found in  \cite[\S1]{viehmann-wedhorn}.}
For the Shimura data introduced in Section \ref{sec_Sh_cyclic}, $H$ is always connected and $p$ is an unramified prime of good reduction if $p \nmid m$. 

There exists a $p$-adic cocharacter $\mu=\mu_h$ which factors through $H(\overline{\QQ}_p)$ and is conjugate 
by an element in $H$ to the Hodge cocharacter induced by $h$.  
Given the local datum $(H_{\QQ_p}, \mu)$, in \cite[\S 5]{kottwitz1} 
and \cite[\S 6]{kottwitz2}, Kottwitz introduced a partially ordered set $B(H_{\QQ_p},\mu)$, and showed that $B(H_{\QQ_p},\mu)$ 
has a canonical {maximal} 
element (called the \emph{$\mu$-ordinary} element)
and a minimal one (called \emph{basic}).
For the Shimura datum $(G={\rm GSp}(V,\Psi),h)$ as in Section \ref{sec_Sh_GSp}, 
the set $B({\rm G_{\QQ_p}},\mu)$ is canonically isomorphic (as a partially ordered set) to 
the set of all symmetric convex polygons, with endpoints $(0,0)$ and $(2g,g)$, integral breakpoints, and slopes in $\QQ\cap [0,1]$
(see \cite[\S 4]{kottwitz-rapoport}). Under this identification, the $\mu$-ordinary and basic elements are respectively the ordinary and supersingular Newton polygons.

Furthermore, for a Shimura datum  $(H,h)\subseteq (G,h)$, Kottwitz constructed a canonical map (the Newton map)
\[\nu :B(H_{\QQ_p},\mu)\to B(G_{\QQ_p},\mu).\]

For any $b\in B(H_{\QQ_p},\mu)$, consider the Newton polygon stratum
\[\Sh(H,h)_{\overline{\FF}_p}(b):=\{x\in |\Sh(H,h)_{\overline{\FF}_p}| \mid \nu(\CX_x)=\nu(b)\},\]
where, as in Section \ref{prelim_NP}, $\CX$ denotes the universal abelian scheme over $\CA_g$.
Viehmann and Wedhorn prove:

\begin{theorem} \label{TNPset} \cite[Theorem 1.6]{viehmann-wedhorn} 
For any $b\in B(H_{\QQ_p},\mu)$ and any irreducible component $S$ of $\Sh(H,h)$, there exists 
$x\in |S_{\overline{\FF}_p}|$ such that $\nu(\CX_x)=\nu(b)$. 
\end{theorem}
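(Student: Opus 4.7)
The statement splits naturally into two sub-claims: \textbf{(a)} for every $b \in B(H_{\QQ_p},\mu)$, the Newton stratum $\Sh(H,h)_{\overline{\FF}_p}(b)$ is non-empty; and \textbf{(b)} every such non-empty stratum meets every irreducible component of $\Sh(H,h)_{\overline{\FF}_p}$. Earlier work of Kottwitz, Rapoport, and Richartz essentially handles (a), while (b) is the additional global input that distinguishes this theorem.

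For (a), the plan is to induct downward along the partial order on $B(H_{\QQ_p},\mu)$, starting from the $\mu$-ordinary element $b_\mu$, whose stratum is known to be open and dense in $\Sh(H,h)_{\overline{\FF}_p}$ (hence non-empty). Given $b<b_\mu$, I would work in the Rapoport--Zink formal neighborhood of a chosen closed point: by Serre--Tate and Grothendieck--Messing, this neighborhood is identified with the universal deformation space of the PEL $p$-divisible group $\CX_x[p^\infty]$, and its local Newton stratification is governed by the affine Deligne--Lusztig varieties $X_\mu(b)$ in the affine flag variety of $H_{\QQ_p}$. Non-emptiness of $X_\mu(b)$ for every $b \in B(H_{\QQ_p},\mu)$---the Kottwitz conjecture, established by Rapoport--Richartz and in greater generality by Gashi, Hamacher, and others---immediately produces a deformation with the prescribed Newton polygon, completing the induction.

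For (b), the plan is to combine two global ingredients: the almost-product decomposition of Newton strata developed by Oort and Mantovan (after a finite cover, a Newton stratum is isomorphic to the product of a central leaf with an Igusa variety / truncated Rapoport--Zink component), together with the transitivity of the prime-to-$p$ Hecke action on the set of irreducible components of $\Sh(H,h)_{\overline{\FF}_p}$. Since Hecke correspondences away from $p$ preserve the Newton polygon of $\CX_x$, one can transport a point in a non-empty Newton stratum on some irreducible component to any prescribed component $S$ via a suitable Hecke operator, producing the required $x \in |S_{\overline{\FF}_p}|$.

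The hard part is (b). Non-emptiness reduces to essentially local Lie-theoretic constructions in the affine flag variety, but propagating a single stratum point into every irreducible component requires global connectedness and transitivity statements---notably the irreducibility of Igusa towers in the almost-product decomposition and the component structure of $\Sh(H,h)_{\overline{\FF}_p}$ under the prime-to-$p$ Hecke action---and these are the genuinely new ingredients in Viehmann--Wedhorn's argument.
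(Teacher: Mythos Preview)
The paper does not prove this statement at all: Theorem~\ref{TNPset} is quoted verbatim from \cite[Theorem~1.6]{viehmann-wedhorn} and used as a black box. There is therefore nothing in the paper to compare your proposal against; you have written a sketch of how one might reprove Viehmann--Wedhorn's theorem, whereas the paper simply cites it.

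That said, as a sketch of the Viehmann--Wedhorn argument your outline has the right global shape but some local confusion. In part~(a) you conflate two different objects: the Rapoport--Zink space attached to a basic $p$-divisible group (which parametrizes $p$-divisible groups together with a quasi-isogeny to the fixed one) and the universal deformation space of a point (Serre--Tate/Grothendieck--Messing). These are not the same, and Viehmann--Wedhorn do not argue by ``downward induction via deformations from the $\mu$-ordinary locus.'' Rather, the link between $B(H_{\QQ_p},\mu)$ and Newton strata goes through Rapoport--Zink uniformization, and non-emptiness of each stratum is equivalent to non-emptiness of the corresponding affine Deligne--Lusztig variety $X_\mu(b)$; this is what Gashi (building on Kottwitz--Rapoport) supplies. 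Your invocation of the Kottwitz conjecture is the correct endpoint, but the route you describe to get there is not.

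For part~(b) you are closer: transitivity of the prime-to-$p$ Hecke action on geometric connected components, combined with the fact that Hecke correspondences preserve Newton polygons, is indeed the mechanism. The almost-product/Igusa decomposition is not needed for this step and is something of a red herring here.
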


\begin{definition} \label{DdefNPset}
Let $(m, N,a)$ be a monodromy datum with signature $\cf$.
For the associated Shimura datum $(H_{\QM},Y_\QM)$
as defined in Section \ref{sec_Sh_cyclic}, we write $B(\mu_m,\cf)$ in place of $B((H_{\QM})_{\QQ_p},\mu(\cf))$
and denote by $\nu(B(\mu_m,\cf))$ its image in $B(G_{\QQ_p},\mu)$.
\end{definition}

We note that the sets $B(\mu_m,\cf)$ and $\nu(B(\mu_m,\cf))$ depend on $p$, specifically on the congruence of $p$ modulo $m$.

By Theorem \ref{TNPset}, $\nu(B(\mu_m,\cf))$ is the set of Newton polygons appearing on 
the irreducible component $S=S(m, N,a)$. 
 We refer to
 the images under $\nu$ of the $\mu$-ordinary and basic elements of  $B(\mu_m,\cf)$ respectively as the $\mu$-ordinary and basic Newton polygons.

\subsection{The $\mu$-ordinary Newton polygon}\label{muordformula} \mbox{ } \\

For a PEL-type Shimura variety, Moonen \cite{moonenST} explicitly computes the slopes of the
$\mu$-ordinary Newton polygon at an unramified prime of good reduction in terms of the signature type.
Here, for convenience, we recall the formula in the case of  the Shimura variety $\Sh(\mu_m,\cf)$, 
following \cite[\S2.8]{eischenmantovan}.

Recall from Sections \ref{prelim_gpalg}-\ref{prelim_curve} that $\sigma$ is the Frobenius action on $\CT=\hom(\QQ[\mu_m], \CC)$ and $\CO$ is the set of $\sigma$-orbits in $\CT$. The signature type $\cf$ is given by \eqref{DMeqn}.
For $\tau \in \CT$, recall that $\cf(\tau)$ is the dimension of the eigenspace of $V^+$ for $\tau$ under the $\mu_m$-action.

For a $\sigma$-orbit $\co\in \CO$, let $s(\co)$ be the number of distinct values of 
$\{\cf(\tau)\mid\tau\in\co\}$ in $[1,g(\co)-1]$.
Let $E(1), \ldots, E(s(\co))$ denote these distinct values,
ordered such that 
\[g(\co) > E(1)> E(2)>\cdots > E(s(\co))>0.\] 
For convenience, we also write $E(0):=g(\co)$ and $E(s(\co)+1):=0$.
 
\begin{proposition}
The $\mu$-ordinary Newton polygon for $\Sh(H,h)$ is given by $\nu_o=\oplus_{\co\in \CO}\mu(\co)$
where the polygon $\mu(\co)$ has exactly $s(\co)+1$ distinct slopes, 
\[0\leq \lambda(0) < \lambda(1) <\cdots <\lambda(s(\co))\leq 1,\]
and, for each integer $t$ such that $0\leq t\leq s(\co)$, the slope
\begin{equation}\label{slope}
\lambda(t)=\frac{1}{\#\co}\sum_{i=0}^{t} \#\{\tau\in \co \mid\cf(\tau)=E(i)\}
\end{equation}
occurs in $\mu(\co)$ with multiplicity
\begin{equation}\label{multiplicity}
m(\lambda(t))=(\#\co)\cdot(E(t)-E(t+1)).
\end{equation}
\end{proposition}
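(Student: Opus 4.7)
The plan is to apply Moonen's general formula \cite{moonenST} for the $\mu$-ordinary Newton polygon of a PEL-type Shimura variety to the specific datum $(H_{\QM}, Y_{\QM})$, and to unpack the combinatorial recipe in terms of the signature $\cf$ and the $\sigma$-orbits $\co \in \CO$. The first step is to use the prime decomposition $\QM \otimes_\QQ \QQ_p = \prod_{\co \in \CO} K_{d_\co, \p_\co}$, which induces a decomposition of the universal $p$-divisible group at a $\mu$-ordinary point $x$ as $\CX_x[p^\infty] = \oplus_{\co \in \CO} \CX_x[\p_\co^\infty]$ compatible with the $\QM$-action. Since the Newton polygon is additive under such isoclinic decompositions, it suffices to show that the summand attached to $\co$ contributes exactly $\mu(\co)$ to $\nu_o$.

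For each orbit $\co$, the second step is to analyze the local datum at $\p_\co$. The completed local field $K_{d_\co, \p_\co}$ is unramified over $\QQ_p$ of residue degree $\#\co$, and the eigenspaces of the Dieudonn\'e module of the $\p_\co$-part are indexed by the characters $\tau \in \co$, cyclically permuted by the Frobenius $\sigma$; the signature values $\{\cf(\tau)\}_{\tau \in \co}$ determine the Hodge filtration inside each eigenspace. The $\mu$-ordinary Dieudonn\'e module over $\overline{\FF}_p$ is then characterized up to isogeny by Moonen's Serre--Tate theory as the unique element of $B(\mu_m, \cf)$ (restricted to $\co$) which is maximal for the partial order, or equivalently, the one whose Newton polygon is a slope-by-slope concatenation of standard Lubin--Tate--type isocrystals along $\co$. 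I would then cite or re-derive the fact, as in \cite[\S 2.8]{eischenmantovan}, that the Dieudonn\'e module admits a basis in which Frobenius acts as $F \cdot e_\tau = p^{\cf(\tau)} e_{\sigma\tau}$ (with suitable conventions), so that the characteristic polynomial of $F^{\#\co}$ on the $\co$-part is $\prod_{\tau \in \co}(T - p^{\cf(\tau)})$ up to units.

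The third step is the direct computation of slopes and multiplicities from this Frobenius action. Grouping the characters of $\co$ by the value of $\cf$, for each distinct value $E(i)$ the subspace $W_i := \oplus_{\tau : \cf(\tau) \geq E(i+1)} (\text{basis vector})$ is Frobenius-stable and gives a slope filtration; the quotient $W_i/W_{i-1}$ is an isoclinic piece of dimension $(\#\co)(E(i)-E(i+1))$ whose Frobenius-slope, computed over the residue degree $\#\co$ extension, is exactly the average $(1/\#\co)\sum_{j \leq i} \#\{\tau \in \co : \cf(\tau) = E(j)\}$. Reindexing with $t$ running over $0, \ldots, s(\co)$ produces the claimed slopes \eqref{slope} with multiplicities \eqref{multiplicity}, and the ordering $E(0) > E(1) > \cdots$ ensures $\lambda(0) < \lambda(1) < \cdots < \lambda(s(\co))$ with values in $[0,1]$.

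The main obstacle is the second step: identifying the explicit shape of the $\mu$-ordinary Dieudonn\'e module on each orbit, since \emph{a priori} Moonen's construction produces the $\mu$-ordinary element abstractly inside $B(\mu_m,\cf)$. One must carefully check that the completely slope-divisible $p$-divisible group constructed from the Frobenius recipe above does lie in the correct Newton stratum \emph{and} in $B(\mu_m, \cf)$, i.e.\ that its Hodge cocharacter is $\mu(\cf)$ and that it dominates every other element of $B(\mu_m,\cf)$; maximality follows from the observation that any other element must arise from a non-trivial permutation of the eigenvalues $\{p^{\cf(\tau)}\}$ within $\co$, which can only flatten the Newton polygon. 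Once maximality is established, the slope formula follows from the explicit Frobenius action, completing the proof.
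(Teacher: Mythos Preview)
The paper does not actually prove this proposition: it is stated as a recollection of Moonen's formula \cite{moonenST}, reformulated as in \cite[\S2.8]{eischenmantovan}, and no argument is given beyond the citations. So there is no ``paper's proof'' to match; what matters is whether your sketch stands on its own.

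Your outline has the right architecture (decompose by $\sigma$-orbits, identify the $\mu$-ordinary isocrystal on each orbit, read off slopes), but the heart of the computation in steps~2 and~3 contains a dimensional error. You write that the $\mu$-ordinary Dieudonn\'e module on $\co$ has a basis $\{e_\tau\}_{\tau\in\co}$ with $F\cdot e_\tau = p^{\cf(\tau)} e_{\sigma\tau}$, and that the characteristic polynomial of $F^{\#\co}$ is $\prod_{\tau\in\co}(T-p^{\cf(\tau)})$. This treats each $\tau$-eigenspace as one-dimensional, but in fact $\dim V_\tau = g(\co)$, so the $\co$-part of the Dieudonn\'e module has rank $\#\co\cdot g(\co)$, not $\#\co$. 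With your formula every basis vector would have the \emph{same} slope $\frac{1}{\#\co}\sum_{\tau\in\co}\cf(\tau)$, and there would be no room for the $s(\co)+1$ distinct slopes in the statement. Correspondingly, the ``filtration'' $W_i=\oplus_{\tau:\cf(\tau)\geq E(i+1)} e_\tau$ you propose in step~3 is not $F$-stable under your Frobenius rule, since $\sigma$ does not preserve the level sets of $\cf$.

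The fix is exactly what the paper records later in \S\ref{EO}: the $\mu$-ordinary module on $\co$ is a direct sum $\bigoplus_{t=0}^{s(\co)} N_t(\co)^{E(t)-E(t+1)}$, where each $N_t(\co)$ is a rank-$\#\co$ isocrystal with a basis $\{e_\tau\}_{\tau\in\co}$ on which $F e_\tau = p\, e_{\tau^\sigma}$ if $\cf(\tau)>E(t+1)$ and $F e_\tau = e_{\tau^\sigma}$ otherwise. Then $F^{\#\co}$ acts on $N_t(\co)$ by $p^{k(t)}$ with $k(t)=\#\{\tau\in\co:\cf(\tau)>E(t+1)\}=\sum_{i=0}^t\#\{\tau:\cf(\tau)=E(i)\}$, giving slope $\lambda(t)$ as in \eqref{slope}, and the multiplicity $(\#\co)(E(t)-E(t+1))$ comes from the exponent on $N_t(\co)$. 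If you rewrite steps~2--3 with this decomposition, your argument goes through; as written, it does not.
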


\begin{example} \label{Esiglen1}
If $\#\co =1$ with signature $(f_1)$, then $\mu(\co)$ has two slopes: $0$ with multiplicity $g(\co)-f_1$ and $1$ with multiplicity $f_1$.
\end{example}

\begin{example} \label{Esiglen2}
If $\co =\{\tau, \tau^*\}$ with signature $(f_1,f_2)$ and $f_1 \leq f_2$, then $\mu(\co)=ord^{2f_1} \oplus ss^{f_2-f_1}$.
\end{example}

\begin{proof}
If $f_1 \not = 0$ and $f_1 \not = f_2$, then $s(\co)=2$; also $E(0)=f_1+f_2$, $E(1)=f_2$, and $E(2)=f_1$.
Then $\lambda(0)=0$ with $m(0)=2f_1$; $\lambda(1)=1/2$ with $m(1/2)=2(f_2-f_1)$; and $\lambda(2)=1$ with $m(1)=2f_1$.
If $f_1=0$ or $f_1=f_2$, then a similar calculation applies.
In all cases, $\nu_o=ord^{2f_1} \oplus ss^{f_2-f_1}$.
\end{proof}

A similar calculation applies for orbits with $\#\co=2$ that are not self-dual, 
again showing that $\mu(\co)$ has slopes in $\{0, 1/2, 1\}$. 
Hence, we focus on the orbits of length $>2$ when computing the $\mu$-ordinary Newton polygon.

\subsection{The Kottwitz method}\label{sec_Kottwitz} \mbox{ } \\

Recall that $S=S(m,N,a)$ is an irreducible component of the smallest Shimura subvariety of $\CA_g$ containing $Z(m,N,a)$.
In \cite[Table 1]{moonen}, Moonen lists the 20 monodromy data $(m,N,a)$ (up to equivalence) for which $Z(m,N,a)=S(m,N,a)$.  
He obtains the list by comparing the 
dimensions of $Z$ and $S$:
on one hand, $\dim(Z)=N-3$ since $3$ out of the $N$ branch points can be fixed via fractional linear transformations;
on the other hand,
the signature type $\cf$ determines $\dim(S)$ by \cite[Equation (3.3.1)]{moonen}. 

As above, $p$ is a rational prime such that $p\nmid m$. We now give an explicit description of the set
$\nu(B(\mu_m,\cf))$ of Newton polygons from Definition \ref{DdefNPset}.
Let $M$ be the (contravariant) Dieudonn\'e module of an abelian variety $X=\CX_x$ for any $x\in\Sh(H_{\QM},\mu)(\overline{\FF}_p)$. 
The $\QM$-action on $X$ induces a $\QM\otimes_\QQ \QQ_p$-action on $X[p^\infty]$ and thus a canonical decomposition
\[X[p^\infty]=\bigoplus_{\co \in \CO} X[\p_\co^\infty].\]
There is an analogous decomposition of Dieudonn\'e modules
\[M=\bigoplus_{\co\in \CO} M_\co.\] 
Let $\nu(\co)$ denote the Newton polygon of $X[\p_\co^\infty]$.
By \eqref{DMeqn}, for $\co=\{\tau_0\}$, we have $X[\p_\co^\infty]=0$ and hence $M_\co=0$. For convenience, we set $\nu(\{\tau_0\})=0$.

\begin{theorem}[special case of {\cite[Theorem 1.6]{viehmann-wedhorn}}] \label{thm_VW} 
The Newton polygons that occur on $S(m,N,a)_{\overline{\FF}_p}$ are the Newton polygons $\oplus_{\co\in \CO} \nu(\co)$ 
satisfying conditions (D), (WA), and (M) below.
\end{theorem}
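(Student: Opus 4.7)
The plan is to deduce this as a direct unpacking of Theorem \ref{TNPset} applied to the PEL datum $(H_{\QM}, Y_\QM)$. Since $p \nmid m$, the group $H_\QM$ is unramified at $p$ and $S(m,N,a)$ is an irreducible component of $\Sh(\mu_m,\cf)$, so by Theorem \ref{TNPset} a Newton polygon $\nu$ occurs on $S(m,N,a)_{\overline{\FF}_p}$ if and only if $\nu$ lies in the image $\nu(B(\mu_m,\cf))$ of the Kottwitz set under the Newton map. The task is therefore to describe this image explicitly in terms of the orbit decomposition.

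First I would use the factorization $\QM \otimes_\QQ \QQ_p = \prod_{\co \in \CO} K_{d_\co, \p_\co}$ to decompose $V_{\QQ_p}$ as a product of $(\QM \otimes \QQ_p)$-isotypic pieces indexed by $\co$, paired under the involution $\co \leftrightarrow \co^*$ coming from $*$. This induces compatible decompositions $M = \bigoplus_\co M_\co$ of Dieudonn\'e modules, a product decomposition of the group $H_\QM$ over $\QQ_p$ (a similitude factor plus Weil-restricted $\GL$-factors for the non-self-dual orbits and unitary factors for the self-dual ones), and hence a product decomposition of $B(\mu_m, \cf)$ into local Kottwitz sets $B_\co$. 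Under the Newton map, each element of $B_\co$ corresponds to a polygon $\nu(\co)$ of width $(\#\co)\cdot g(\co)$ and total height determined by the signature $\{\cf(\tau)\}_{\tau \in \co}$.

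Next I would invoke the classical description of Kottwitz sets for $\GL_n$-type and unitary-type local data: an element is determined by a Newton polygon whose slopes have denominators dividing $\#\co$, whose endpoint is determined by $\sum_{\tau \in \co} \cf(\tau)$, and which lies on or above the Hodge polygon read off from the signature. These three requirements are precisely conditions (M) (slope denominators / integrality of breakpoints, giving the admissible lattice of polygons), (WA) (Mazur's inequality, i.e.\ Newton $\geq$ Hodge orbit-by-orbit), and (D) (the duality $\nu(\co^*) = \nu(\co)^\vee$ imposed by the polarization pairing, which identifies $M_\co$ with the dual of $M_{\co^*}$ under the symplectic form $\Psi$ restricted to the $*$-pairing of eigenspaces, and in particular forces symmetry on self-dual orbits).

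The main obstacle is checking that the polygons satisfying (D), (WA), (M) exhaust $\nu(B(\mu_m,\cf))$ and not just a subset. One direction---necessity---follows formally from the general properties of the Newton map together with the Hodge--Newton comparison over each local factor. The reverse direction---realizing every combinatorially admissible $\oplus_\co \nu(\co)$ by an element of the Kottwitz set---reduces to the corresponding statement for each local factor, which is the content of Kottwitz's classification \cite{kottwitz1}, \cite{kottwitz2} for $\GL$ and unitary groups and the compatibility of the Newton map with the product decomposition. Once this dictionary between the abstract Kottwitz set and the triple of explicit conditions is established, the theorem follows by invoking Theorem \ref{TNPset} on each irreducible component of $\Sh(\mu_m,\cf)$ containing $Z(m,N,a)$.
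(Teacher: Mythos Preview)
Your proposal is correct and matches the paper's approach: the paper does not give a self-contained proof either, but simply cites the result as a special case of \cite[Theorem 1.6]{viehmann-wedhorn} and then, exactly as you do, explains where each of the three conditions comes from --- (D) from the polarization and Rosati involution, (WA) from the Mazur/Rapoport--Richartz inequality \cite[Theorem 4.2]{rapoport-richartz}, and (M) from the $K_{d_\co,\p_\co}$-action on $M_\co$. Your write-up is somewhat more detailed about the product decomposition of $(H_\QM)_{\QQ_p}$ and the resulting factorization of the Kottwitz set, but the substance is the same.
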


The polarization on $X$ induces a prime-to-$p$ isogeny $X\rightarrow X^\vee$ and the Rosati involution acts on $\QM$ via the involution $*$.
Thus there is an isomorphism $M\cong M^\vee(1)$ compatible with the decomposition and this isomorphism induces certain restrictions on $\nu(\co)$ in the following sense:

\emph{condition (D)}: 
\begin{enumerate}
\item if $\co^*\neq \co$, then $M_\co\cong M_{\co^*}^\vee(1)$, i.e., $\nu(\co)$ determines $\nu(\co^*)$. More precisely, if $\nu(\co)$ has slopes $\lambda_1,\dots, \lambda_s$ with multiplicities $m_1,\dots, m_s$, then $\nu(\co^*)$ has slopes $1-\lambda_1,\dots, 1-\lambda_s$ with multiplicities $m_1,\dots, m_s$.
\item if $\co^*=\co$, then $M_\co\cong M_\co^\vee(1)$, i.e., $\nu(\co)$ is symmetric. In other words, if $\lambda$ is a slope of $\nu(\co)$ of multiplicity $m_\lambda$, then $1-\lambda$ is a slope of $\nu(\co)$ of multiplicity $m_\lambda$.
\end{enumerate}

By \cite[Theorem 4.2]{rapoport-richartz}, for any $\co\in \CO$, the Newton polygon $\nu(\co)$ satisfies
the weakly admissible condition:
\begin{center}
\emph{condition (WA)}: $\nu(\co)\geq \mu(\co)$.
\end{center}

Recall that $\p_{\co}$ is a prime of $K_{d_\co}$ with inertia degree $\# \co$. Note that $K_{d_\co, \p_\co}$ acts on $M_\co$, which yields:
\begin{center}
\emph{condition (M)}: the multiplicities of slopes in $\nu(\co)$ are divisible by $\# \co$.
\end{center}

\subsection{Computation} \label{ScomputeNP} \mbox{ } \\

We compute all possible Newton polygons for the $20$ families from \cite[Table 1]{moonen};
the data can be found in the tables in \S\ref{sec_table}.  Here, the label $M[r]$ denotes the $r$th label in  \cite[Table 1]{moonen}.
Since the computations are similar for all $20$ cases, we give complete details 
for two examples:
first, family $M[17]$ where $m=7$ is prime; and second, family $M[19]$ where $m=9$ is not a prime. 
 
Set $\CO'=\CO-\{\{\tau_0\}\}$; from now on, $\co\in \CO'$.
The Newton polygon for $X$ is given by $\oplus_{\co\in \CO'} \nu(\co)$.
In the following two examples, since non-symmetric Newton polygons arise, we use 
different notation from \S\ref{prelim_NP}: when ${\rm gcd}(a,b)=1$, with small abuse of notation,
we use $G_{a,b}$ instead of $\nu(G_{a,b})$
to denote the Newton polygon of pure slope $a/(a+b)$ with multiplicity $a+b$.

\begin{example} \label{M17muordNP}
We consider family $M[17]$ where $m=7$, $N=4$, and $a=(2,4,4,4)$. 
The signature type is $(1,2,0,2,0,1)$.
Since $N=4$, the Shimura variety is $1$-dimensional. 
For each congruence class of $p \not = 7$, there are exactly two possible Newton polygons. 

\begin{enumerate}





 
\item If $p \equiv 2, 4 \bmod 7$, then $\#\co=3$ and $\#\CO=2$. In this case, $\CO=\{\co_1,\co_2\}$, where $\co_1=\{\tau_1,\tau_2, \tau_4\}$ and $\co_2=\{\tau_3,\tau_5,\tau_6\}$. 
Since $\co_1^*=\co_2$, 
by (D), $M_{\co_1}\cong M_{\co_2}^\vee(1)$. 
By Section \ref{muordformula}, $\mu(\co_2)=G_{0,1}^3\oplus G_{1,2}$. By (WA) and (M), if $\nu(\co_2)\neq \mu(\co_2)$, then $\nu(\co_2)=G_{1,5}$. Therefore, by (D),

$$\nu_o = G_{0,1}^3 \oplus G_{1,2} \oplus G_{2,1} \oplus G_{1,0}^3,  \ {\rm and} \  \nu_b = G_{1,5} \oplus G_{5,1}.$$

\item If $p \equiv 3,5 \bmod 7$, then $\#\co=6$ and $\#\CO=1$ and there is only one orbit $\co=\CT$. 

By Section \ref{muordformula},
$$\nu_o=\mu(\co) = G_{1,2}^2 \oplus G_{2,1}^2.$$ By (M), every eigenspace of the Frobenius action on 
the Dieudonn\'e module $M$ 
has dimension divisible by $6$. Since the interval $(1/3,1/2)$ contains no rational number with denominator $6$, the basic Newton Polygon
$\nu_b$ is supersingular.

\end{enumerate}

\end{example}

\begin{example}
We compute all possible Newton polygons for family $M[19]$ where $m=9$, $N=4$ and $a=(3,5,5,5)$. 
The signature is $(1,2,0,2,0,1,0,1)$. 
Then $\tau_3,\tau_6$ have order $3$ and $\tau_i$ have order $6$ for $i=1,2,4,5,7,8$. 
By \eqref{DMeqn} and Remark \ref{rmk_dimV}, $g(\tau_3)=g(\tau_6)=0+1=1$ and $g(\tau_i)=2$ for $i=1,2,4,5,7,8$. 

To illustrate the idea, we show the computation for $p\equiv 4,7 \bmod 9$. In this case, $\# \CO'=4$, with orbits $\co_1=\{\tau_1,\tau_4,\tau_7\}, \co_2=\{\tau_2,\tau_5,\tau_8\}, \co_3=\{\tau_3\}, \co_4=\{\tau_6\}$.

By Section \ref{muordformula},
$\mu(\co_3)=G_{0,1}$ and by (WA), this is the only possibility for the Newton polygon of $M_{\co_3}$. By (D), the only possible Newton polygon on $M_{\co_3}\oplus M_{\co_4}$ is $G_{0,1}\oplus G_{1,0}$. Again by Section \ref{muordformula}, $\mu(\co_1)=G_{1,2}\oplus G_{2,1}$ on $M_{\co_1}$. By (M), every slope in the Newton polygon of $M_{\co_3}$ has multiplicity $3$ or $6$ and hence the denominator divides $6$. By (WA), the first slope lies in the interval $[1/3,1/2]$.
So, there are two possible Newton polygons 
\[
\nu_o(\co_1)=G_{1,2}\oplus G_{2,1},  \ {\rm and} \  \nu_{b}(\co_1)=G_{1,1}^3.
\]
Then, by (D), the only possible Newton polygons on $M_{\co_1}\oplus M_{\co_2}$ are 
\[
\nu_{o}(\co_1,\co_2)=G_{1,2}^2\oplus G_{2,1}^2, \ {\rm and} \  \nu_{b}(\co_1,\co_2)=G_{1,1}^6.
\]
In conclusion, the two possible Newton polygons of the Dieudonn\'e module $M$ are:
\[
\nu_{o}=G_{0,1}\oplus G_{1,2}^2\oplus G_{2,1}^2\oplus G_{1,0},  \ {\rm and} \  \nu_{b}=G_{0,1}\oplus G_{1,1}^6\oplus G_{1,0}.
\]

\end{example}

\subsection{The $\mu$-ordinary Ekedahl--Oort type}\label{EO}\mbox{}\\

The Ekedahl--Oort type is a combinatorial invariant classifying the structure of the reduction modulo $p$ of the Dieudonn\'e module.
In  \cite[Theorem 1.3.7]{moonenST}, 
Moonen shows that the largest Ekedahl--Oort stratum coincides with the $\mu$-ordinary Newton stratum.  
We recall the $\EE$-module structure of the mod $p$ Dieudonn\'e module for the $\mu$-ordinary locus, given in \cite[\S 1.2.3]{moonenST}. 

Fix $\co\in\CO$ and  $t\in\{0,\dots, s(\co)\}$. 
We define the $\EE$-module: 
\[N_t(\co):=\oplus_{\tau\in\co}\bfp e_\tau,\] by 
\begin{eqnarray*}
F(e_\tau) & = & e_{\tau^\sigma} \ {\rm if } \ \cf(\tau)\leq \E(t+1), \ 0 \ {\rm otherwise},\\
V(e_{\tau^{\sigma}}) & =& e_\tau \ {\rm  if} \  \cf(\tau)>\E(t+1), \ 0 \ {\rm otherwise}.
\end{eqnarray*}

Then $N_t(\co)$ arises as the mod $p$-reduction of an isoclinic Dieudonn\'e module of slope $\lambda(t)$.  
 
The reduction modulo $p$ of the $\mu$-ordinary Dieudonn\'e module of ${\rm Sh}(H,h)_\bfp$ is 
\begin{equation} \label{Nomerge}
N_o=\bigoplus_{\co\in\CO} N(\co),\text{ where } N(\co)=\bigoplus_{t=0}^{s(\co)} N_t(\co)^{E(t)-E(t+1)}.
\end{equation}

This agrees with the formula in \cite[\S 1.2.3]{moonenST}:
note that Moonen defines an $\EE$-module for each $j\in \{1, \ldots, g(\co)\}$;
if $g(\co)-E(t+1)< j \leq g(\co)-E(t)$, then the $\EE$-modules have the same structure, 
which determines the multiplicity of each factor $N_t(\co)$ in \eqref{Nomerge}.

Recall from Section \ref{SnotDM} that 
$L$ denotes the $\EE$-module of an ordinary elliptic curve, and $N_{1,1}$ that of a supersingular elliptic curve.

\begin{example} \label{Edieumod2}
If $\co =\{\tau,\tau^*=\tau^\sigma\}$ with signature $(f_1,f_2)$ with $f_1 \leq f_2$, 
then $N(\co) \simeq L^{2f_1} \oplus N_{1,1}^{f_2-f_1}$.
\end{example}

Note that Example \ref{Edieumod2} is not automatic from Example \ref{Esiglen2} since several Dieudonn\'e modules can occur 
with Newton polygon $ss^{f_2-f_1}$; 
the one in Example \ref{Edieumod2} has $a$-number $f_2-f_1$; it
decomposes into dimension $2$ subspaces stable under $F$ and $V$.

\begin{proof}
If $f_1 \not = 0$ and $f_1 \not = f_2$, then by the computation in Example \ref{Esiglen2} and \eqref{Nomerge}
 $N_0(\co)$ is generated by $e_\tau,e_{\tau^\sigma}$ with $F={\rm id}$, the identity map, and $V=0$; $N_2(\co)$ is generated by $e_\tau,e_{\tau^\sigma}$ with $V={\rm id}$ and $F=0$; and $N_1(\co)$ is generated by $e_\tau,e_{\tau^\sigma}$ with  $F(e_\tau)=V(e_\tau) = e_{\tau^\sigma}$ and 
$F(e_{\tau^\sigma}) = V(e_{\tau^\sigma}) = 0$.
A similar calculation works when $f_1=f_2$ or $f_1=0$.
In all cases, $N(\co) \simeq L^{2f_1} \oplus N_{1,1}^{f_2-f_1}$.
\end{proof}

A similar calculation applies for all orbits with $\#\co =2$.
Hence, we focus on orbits of length $>2$ when computing the $\mu$-ordinary mod $p$ Dieudonn\'e module.

\begin{example} \label{EOM17}
We compute the $\mu$-ordinary mod $p$ Dieudonn\'e module $N_o$ for family $M[17]$ where $m=7$. 
The signature is $(1,2,0,2,0,1)$ and $g(\tau)=2$.
\begin{enumerate}


\item If $p \equiv 2,4 \bmod 7$, then the $p$-rank is $3$, and
the local-local 
part of $N_o$ has 6 generators with action of $F$ and $V$ given by:
\[\begin{array}{|c|c|c|c|}
\hline
t=0, E(t+1)=1 & e_1 & e_2 & e_4 \\
\hline
F & e_2 & 0 & 0 \\
\hline
V & e_4 & 0 & e_2 \\
\hline
\end{array}
\text{ and }
\begin{array}{|c|c|c|c|}
\hline
t=1, E(t+1)=0 & e'_3 & e'_5 & e'_6\\
\hline
F & e'_5 & e'_6 & 0 \\
\hline
V &  e'_6 & 0 & 0\\
\hline
\end{array}
\]
Thus $N_o \simeq L^3 \oplus (\EE/\EE(F-V^2))e_1 \oplus (\EE/\EE(F^2-V))e'_3$.
One can compute that $N_o$ has Ekedahl-Oort type $[1,2,3,3,4,4]$.

\item If $p \equiv 3,5 \bmod 7$, then $N_o$ has 12 generators
with the actions of $F$ and $V$ given by:

\begin{small}
\[\begin{array}{|c|c|c|c|c|c|c|}
\hline
t=0, E(t+1)=1 & e_1 & e_3 & e_2 & e_6 & e_4 & e_5\\
\hline
F &e_3 & e_2 & 0 & e_4& 0 & e_1\\
\hline
V & 0 & 0 & 0 & e_2 & 0 & e_4\\
\hline
\end{array}
\text{ and }
\begin{array}{|c|c|c|c|c|c|c|}
\hline
t=1, E(t+1)=0 & e'_1 & e'_3 & e'_2 & e'_6 & e'_4 & e'_5\\
\hline
F &0 & e'_2 & 0 & 0 & 0 & e'_1\\
\hline
V & 0 & e'_1 & 0 & e'_2 & e'_6 & e'_4\\
\hline
\end{array}\]

\end{small}
Thus 
$N_o = \EE\langle e_5,e_6 \rangle/\EE(F^3 e_5 -V e_6, V e_5 - F e_6) 
\oplus \EE\langle e'_3, e'_5 \rangle/\EE(Fe_3'- V^3 e_5', Ve_3'-Fe_5')$.
In total, $N_o$ has $4$ generators, thus $a$-number $4$.
One can compute that $N_o$ has Ekedahl-Oort type $[0,1,2,2,2,2]$.
\end{enumerate}
\end{example}

\begin{example} \label{EOM19}
We compute the $\mu$-ordinary mod $p$ Dieudonn\'e module $N_o$ for family $M[19]$ where $m=9$.  
The signature is $(1,2,0,2,0,1,0,1)$ and $g(\tau_1)=2$ and $g(\tau_3)=1$.
\begin{enumerate}
\item If $p \equiv 2, 5 \bmod 9$, there are two orbits $\co_3$ and $\co_1$. 
By Example \ref{Edieumod2}, $N(\co_3) \simeq (\EE/\EE(F-V)) e_3$.
For the orbit $\co_1$ of $\tau_1$, we see that
$N(\co_1)$
has $12$ generators with the actions of $F$ and $V$ given by:

\begin{small}
\[\begin{array}{|c|c|c|c|c|c|c|}
\hline
t=0, E(t+1)=1 & e_1 & e_2 & e_4 & e_8 & e_7 & e_5\\
\hline
F & e_2 & 0 & 0 & e_7 & e_5 & e_1\\
\hline 
V & 0 & 0 & e_2 & e_4 & 0 & 0 \\
\hline
\end{array}
\text{ and }
\begin{array}{|c|c|c|c|c|c|c|}
\hline
t=1, E(t+1)=0 & e'_1 & e'_2 & e'_4 & e'_8 & e'_7 & e'_5\\
\hline
F & 0 & 0 & 0 & 0 & e'_5 & e'_1\\
\hline 
V & 0 & e'_1 & e'_2 & e'_4 & e'_8 & 0 \\
\hline
\end{array}\]
\end{small}

Then $N(\co_1) = (\EE/\EE(F^4-V^2))e_8 \oplus (\EE/\EE(V^4-F^2))e'_7$.
The Ekedahl-Oort type of $N(\co_1)$ is $[0,1,2,2,3,4]$.
In total, $N_o$ has 3 generators, thus $a$-number $3$.

\item If $p \equiv 4,7 \bmod 9$, there are four orbits.  We check that:
\begin{eqnarray*}
N(\co_3) \oplus N(\co_6) & \simeq & L\\
N(\co_1) & \simeq & (\EE/\EE(F^2-V))e_7 \oplus (\EE/\EE(F-V^2)) e'_7\\
N(\co_2) & \simeq & (\EE/\EE(F^2-V))e_8 \oplus (\EE/\EE(F-V^2)) e'_5.
\end{eqnarray*}

\end{enumerate}

One interesting feature is that the Newton polygon $(1/3, 2/3)^2$ matches with two different Dieudonn\'e modules in
Examples \ref{EOM17}(2) and \ref{EOM19}(2).
\end{example}

\section{Newton polygons of smooth curves} \label{sec_smooth}

Let $(m,N,a)$ be a monodromy datum.  If it is the monodromy datum of one of 
the twenty special families from \cite[Table~1]{moonen}, we call it \emph{special}.
Fix a prime $p$ with $p \nmid m$.

For a Newton polygon $\nu \in \nu(B(\mu_m,\cf))$,
by definition $\nu$ occurs as the Newton polygon of an abelian variety represented by a point of $Z(m,N,a)\subset\CA_g$.
This abelian variety is the Jacobian of a curve \emph{of compact type}.
One can ask whether $\nu$ occurs as the Newton polygon of the Jacobian of a curve that is \emph{smooth}.
By Definition~\ref{DZmNa}, it is equivalent to ask whether $\nu$ occurs as the Newton polygon for a point of $Z^0(m,N,a)$. 
In general, this is a subtle and difficult question. 
In this section, we provide an answer in three settings: when the Newton polygon $\nu$ is $\mu$-ordinary in Section~\ref{Smuord},
PEL-indecomposable in Section~\ref{SPELind}, or basic in Section~\ref{Ssmbasic}.  
In Section~\ref{conclusion}, we use these three criteria and the purity theorem
to prove that all of the Newton polygons for the special families in \cite[Table 1]{moonen} occur for Jacobians of smooth curves
except for a few of the supersingular ones. 

When there is no risk of confusion, we write $Z$ and $Z^0$ for $Z(m,N,a)$ and $Z^0(m,N,a)$ respectively.

\subsection{The $\mu$-ordinary case} \label{Smuord} \mbox{}\\

\begin{proposition} \label{muord}
For any special monodromy datum $(m,N,a)$, and any prime $p$ with $p \nmid m$, 
the $\mu$-ordinary Newton polygon $\nu_o\in \nu(B(\mu_m,\cf))$  
and the $\mu$-ordinary mod $p$ Dieudonn\'e module $N_o$ occur for the Jacobian of a smooth curve in $Z(m,N,a)$. 
\end{proposition}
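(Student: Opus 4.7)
The plan is to reduce this to the classical fact that the $\mu$-ordinary stratum is open and dense in the reduction modulo $p$ of the Shimura variety. Since the monodromy datum $(m,N,a)$ is special in the sense of Moonen, we have by construction that $Z(m,N,a)$ equals the full irreducible component $S(m,N,a)$ of $\Sh(\mu_m,\mathfrak{f})$; in particular $Z$ is irreducible and smooth of the expected dimension at $p$ (using $p\nmid m$, which is the good reduction condition for the PEL datum). So I can work inside a single irreducible component of the Shimura variety.

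First, I would observe that $Z^0(m,N,a)$ is Zariski open in $Z(m,N,a)$ by Definition \ref{DZmNa}: it is the image of $U\to \CA_g$ under $j(m,N,a)$, and $Z$ is defined as its closure, so the complement $Z\setminus Z^0$ is a proper closed substack. This openness is preserved on special fibers, so $Z^0_{\overline{\FF}_p}$ is a non-empty open substack of $Z_{\overline{\FF}_p}=S(m,N,a)_{\overline{\FF}_p}$.

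Next, I would invoke the fact that on a PEL-type Shimura variety at an unramified prime of good reduction, the $\mu$-ordinary Newton stratum $\Sh(H,h)_{\overline{\FF}_p}(\nu_o)$ is open and dense in each irreducible component. Moreover, by Moonen's theorem \cite[Theorem 1.3.7]{moonenST}, the $\mu$-ordinary Ekedahl--Oort stratum coincides with the $\mu$-ordinary Newton stratum, so points of this locus have mod $p$ Dieudonn\'e module isomorphic to $N_o$ from \eqref{Nomerge}. Intersecting this open dense subset with the non-empty open $Z^0_{\overline{\FF}_p}$ inside the irreducible $S(m,N,a)_{\overline{\FF}_p}$ yields a non-empty substack, and any geometric point of the intersection corresponds to a smooth curve $C_t$ in the family whose Jacobian has Newton polygon $\nu_o$ and mod $p$ Dieudonn\'e module $N_o$.

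There is essentially no hard step, because the specialness of the monodromy datum has been arranged precisely to identify $Z$ with an irreducible component of the Shimura variety, collapsing the problem to the two standard density statements above. The only point that deserves care is the irreducibility of $S(m,N,a)_{\overline{\FF}_p}$, which one can extract from the general theory of canonical integral models of PEL-type Shimura varieties at unramified primes together with the irreducibility of $Z$ over $\CC$ noted in the excerpt; alternatively, one can argue component-by-component, since it suffices to find one irreducible component of $Z_{\overline{\FF}_p}$ meeting $Z^0_{\overline{\FF}_p}$ in the $\mu$-ordinary locus, and each such component does so by openness and density of the $\mu$-ordinary stratum in each component.
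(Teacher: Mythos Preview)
Your proposal is correct and takes essentially the same approach as the paper: both argue that $Z^0$ and the $\mu$-ordinary stratum are open and dense in $Z$, hence intersect. Your version supplies more detail (invoking the specialness hypothesis to identify $Z$ with $S(m,N,a)$, citing Moonen's identification of the $\mu$-ordinary Newton and Ekedahl--Oort strata, and worrying about irreducibility of the special fiber), whereas the paper's proof is a single sentence asserting density of both loci.
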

\begin{proof}
Both $Z^0$ and the $\mu$-ordinary Newton polygon
(or Ekedahl--Oort) stratum $Z(\nu_o)$ are open and dense in $Z$, thus their intersection is non-empty.
\end{proof}

\subsection{PEL-indecomposable Newton polygons} \label{SPELind} \mbox{}\\

Suppose $C$ is a curve of genus $g$ of compact type.
If the Newton polygon of ${\rm Jac}(C)$ is indecomposable as a symmetric Newton polygon of height $2g$,
(e.g., $G_{1,g-1}\oplus G_{g-1,1}$), then $C$ is necessarily smooth. 
In this section, we refine that observation for curves of compact type which are $\mu_m$-Galois covers of the projective line.

\begin{remark} \label{Raddmiss}
The following material is based on information about the boundary of the Hurwitz space of 
cyclic covers of the projective line, see e.g., \cite[Chapter 4]{wewersthesis} and \cite{Ekedahl94}. 
A point $\xi$ of $Z(m,N,a)-Z^0(m,N,a)$ represents the Jacobian of a singular curve $Y$.  
This curve $Y$ is of compact type since ${\rm Jac}(Y)$ is an abelian variety.
Furthermore, there is a $\mu_m$-cover $\psi: Y \to X$ where $X$ is a singular curve of genus $0$.
The cover $\psi$ is the join of two $\mu_m$-covers $\psi_1:Y_1 \to X_1$ and $\psi_2:Y_2 \to X_2$, 
where the curves are clutched together in ordinary double points.
Since $\xi$ is in the closure of $Z^0(m,N,a)$, the cover $\psi$ is \emph{admissible}, meaning that
at each clutching point the canonical generators of inertia for $\psi_1$ and $\psi_2$ are inverses.
Since $Y$ is connected, either $Y_1$ or $Y_2$ is connected;
without loss of generality, say $Y_1$ is connected and let $r$ be the number of components of $Y_2$.
Then $\psi_2$ is induced from a $\mu_{m/r}$-cover $\psi_2^{\rm co}:Y_2^{\rm co} \to X_2$, where $Y_2^{\rm co}$ 
is isomorphic to a connected component of $Y_2$.
Finally, the fact that $Y$ is of compact type implies that each component of $Y_2$ is 
clutched together with $Y_1$ at exactly one point.
\end{remark}

In the situation of Remark \ref{Raddmiss}, the monodromy data for the covers satisfy certain 
numerical conditions as follows.

\begin{notation} \label{Nadddeg}
Let $(m, N, a)$ be the monodromy datum for $\psi$.
Let $(m, N_1, \alpha_1)$ be the monodromy datum for $\psi_1$.
Let $(m/r, N_2, \alpha_2)$ be the monodromy datum for $\psi_2^{\rm co}$.
Write \[\alpha_1=(\alpha_1(1), \ldots, \alpha_1(N_1)), \ \alpha_2=(\alpha_2(1), \ldots, \alpha_2(N_2)).\]
Then $N_1+N_2 = N+2$.
Let $\tilde{\alpha}_2 = {\rm Ind}_{m/r}^{m} \alpha_2 := (r\alpha_2(1), \ldots, r\alpha_2(N_2))$; 
we call it the monodromy datum for $\psi_2$, even if $r \not = 1$. 
The admissible condition is that $\alpha_1(N_1) \equiv - r \alpha_2(1) \bmod m$.
Then (possibly after rearranging) 
\[a = (\alpha_1(1), \ldots, \alpha_1(N_1-1), r\alpha_2(2), \ldots, r\alpha_2(N_2)).\] 
Finally, $r={\rm gcd}(m, \alpha_1(N_1))$.
\end{notation}

\begin{definition} \label{Ddegct}
We say that the pair $\alpha_1,\tilde{\alpha}_2$ is a degeneration of compact type of the inertia type $a$
if the numerical conditions in Notation~\ref{Nadddeg} are satisfied.
\end{definition}

Suppose that $\alpha_1,\tilde{\alpha}_2$ is a degeneration of compact type of $a$.
For $i=1,2$, write $Z_1:=Z(m,N_1,\alpha_1)\subset \CA_{g_1}$ and $Z_2 :=Z(m/r, N_2, \alpha_2)\subset \CA_{g_2}$.
The numerical conditions imply that $g_1+rg_2=g$.
We define $\oplus:\CA_{g_1}\times \CA_{g_2}\to\CA_g$, by $(A_1,A_2)\mapsto A_1\oplus A_2^r$.
Let $\cf$ (resp.\ $\cf_1$, $\cf_2$) be the signature for $a$ (resp.\ $\alpha_1$, $\alpha_2$).
Consider ${\rm Sh}(\mu_m,\cf_1)$ (resp.\ ${\rm Sh}(\mu_{m/r},\cf_2)$),
the smallest PEL-type Shimura stack containing $Z_1$ (resp.\ $Z_2$) 
as introduced in Section \ref{sec_Sh_cyclic}. 
It follows from the definitions that 
\[\oplus \left({\rm Sh}(\mu_m,\cf_1) \times {\rm Sh}(\mu_{m/r},\cf_2)\right)\subseteq {\rm Sh}(\mu_m,\cf).\]

\begin{definition} \label{Dpeldec}
Let $(m, N, a)$ be a monodromy datum and let $\cf$ be the corresponding signature.
A Newton polygon $\nu\in \nu(B(\mu_m,\cf))$ is {\em PEL-decomposable} if 
$\nu = \nu_1 \oplus \nu_2^r$ for some 
$\nu_1 \in \nu(B(\mu_m,\cf_1))$ and $\nu_2 \in \nu(B(\mu_{m/r},\cf_2))$, 
for some pair $\cf_1, \cf_2$ of signatures arising 
from a degeneration of compact type of $a$.
\end{definition}

We note that the condition of being PEL-decomposable depends on $p$ modulo $m$.

\begin{proposition}\label{dec}\label{indecomp}
For a special monodromy datum $(m,N,a)$, and a prime $p$ with $p \nmid m$, if $\nu\in \nu(B(\mu_m,\cf))$ 
is not PEL-decomposable,
then $\nu$ occurs as the Newton polygon of the Jacobian of a smooth curve in $Z(m,N,a)$. 
\end{proposition}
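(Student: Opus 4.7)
The plan is a contrapositive argument: if $\nu \in \nu(B(\mu_m,\cf))$ fails to be realized on the open locus $Z^0 := Z^0(m,N,a)$, I will show that $\nu$ must be PEL-decomposable in the sense of Definition~\ref{Dpeldec}, contradicting the hypothesis.

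First, by Theorem~\ref{thm_VW} the Newton polygon $\nu$ is attained at some point $\xi \in Z = Z(m,N,a)$; if $\xi \in Z^0$ there is nothing to prove, so I may assume $\xi \in Z \setminus Z^0$. Then Remark~\ref{Raddmiss} identifies the abelian variety at $\xi$ with the Jacobian of a singular compact-type curve $Y$ equipped with an admissible $\mu_m$-cover $\psi : Y \to X$. This $\psi$ is the join of a connected cyclic cover $\psi_1 : Y_1 \to X_1$ with monodromy datum $(m,N_1,\alpha_1)$ and a cover $\psi_2 : Y_2 \to X_2$ induced from a connected $\mu_{m/r}$-cover $\psi_2^{\rm co} : Y_2^{\rm co} \to X_2$ with monodromy datum $(m/r,N_2,\alpha_2)$. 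The compact-type hypothesis forces each of the $r$ components of $Y_2$ to meet $Y_1$ in a single node, so
\[
{\rm Jac}(Y) \ \cong \ {\rm Jac}(Y_1) \,\times\, {\rm Jac}(Y_2^{\rm co})^r,
\]
which gives $\nu = \nu_1 \oplus \nu_2^r$ with $\nu_1 := \nu({\rm Jac}(Y_1))$ and $\nu_2 := \nu({\rm Jac}(Y_2^{\rm co}))$.

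To close the argument I would verify two bookkeeping points. First, the numerical identities of Notation~\ref{Nadddeg} (including the congruence $\alpha_1(N_1) \equiv -r\alpha_2(1) \bmod m$) are exactly those extracted from an admissible clutching, so $(\alpha_1, \tilde\alpha_2)$ is a degeneration of compact type of $a$ as in Definition~\ref{Ddegct}. Second, ${\rm Jac}(Y_1) \in Z(m,N_1,\alpha_1) \subset \Sh(\mu_m,\cf_1)$ and ${\rm Jac}(Y_2^{\rm co}) \in Z(m/r,N_2,\alpha_2) \subset \Sh(\mu_{m/r},\cf_2)$, so a second application of Theorem~\ref{thm_VW} to each ambient Shimura variety places $\nu_1 \in \nu(B(\mu_m,\cf_1))$ and $\nu_2 \in \nu(B(\mu_{m/r},\cf_2))$. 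Together these exhibit $\nu = \nu_1 \oplus \nu_2^r$ as PEL-decomposable, giving the desired contradiction.

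The main obstacle is the appeal to Remark~\ref{Raddmiss}: I must know that \emph{every} point of $Z \setminus Z^0$ arises, via the clutching construction, from a pair of smaller cyclic-cover families whose monodromy data satisfy both the admissibility and the compact-type conditions, and that the induced $\mu_m$-action on the Jacobian matches the one coming from the Shimura-theoretic decomposition. This is the content of the stable-reduction theory for $\mu_m$-Galois covers of $\PP$ recorded in \cite[Chapter~4]{wewersthesis} and \cite{Ekedahl94}; once granted, the remainder of the proof is a direct translation between the geometric decomposition of a degenerate Jacobian and the combinatorial notion of PEL-decomposability.
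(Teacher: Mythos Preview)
Your argument is correct and matches the paper's approach exactly: both proceed by contrapositive, using that for a special datum $Z=S$ so Theorem~\ref{thm_VW} places $\nu$ at some point of $Z$, and then invoking Remark~\ref{Raddmiss} (together with Notation~\ref{Nadddeg} and Definitions~\ref{Ddegct}, \ref{Dpeldec}) to see that any boundary point yields a PEL-decomposable Newton polygon. You have simply fleshed out the two steps the paper leaves implicit---the product decomposition ${\rm Jac}(Y)\cong{\rm Jac}(Y_1)\times{\rm Jac}(Y_2^{\rm co})^r$ and the membership $\nu_i\in\nu(B(\mu_{m_i},\cf_i))$---which is fine.
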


\begin{proof}
For a special monodromy datum, every Newton polygon $\nu\in \nu(B(\mu_m,\cf))$ 
occurs for a point of $Z(m,N,a)$.
By Remark \ref{Raddmiss}, if this point is the Jacobian of a singular curve, then 
$\nu$ is PEL-decomposable by Notation \ref{Nadddeg} and Definitions \ref{Ddegct} and \ref{Dpeldec}.
\end{proof}

We compute all PEL-decomposable Newton polygons, for families $M[1-20]$ 
from \cite[Table~1]{moonen}; the data for families $M[3-20]$
can be found in Lemma \ref{degen}.  
The computations are similar in all cases, so we give complete details only for $M[15]$.

\begin{example} \label{Em15add}
	For family $M[15]$ where $m=8$, $N=4$, and $a=(2,4,5,5)$; the signature is $\cf=(1,1,0,0,2,0,1)$.  
	There are two degenerations of compact type of $a$: 
	\[\alpha_1=(2,5,1), \ \alpha_2=(7,4,5) \ {\rm or} \  \alpha'_1=(5,5,6), \ \tilde{\alpha}'_2={\rm Ind}_4^8(1,2,1).\]
	The corresponding partitions of the signature $\cf$ are respectively:
	\begin{eqnarray*}
	\cf_1 & = & (1,1,0,0,1,0,0), \ \cf_2=(0,0,0,0,1,0,1);\\
	\cf'_1 & = & (0,1,0,0,1,0,1), \ \tilde{\cf}'_2=(1,0,0,0,1,0,0).
	\end{eqnarray*}
	Thus, for each non-trivial congruence class of $p \bmod  8$, 
	there are at most two PEL-decomposable Newton polygons $\eta,\eta'$. 
	We compute $\eta,\eta'$ using the Kottwitz method. 
	\begin{enumerate}
		\item If $p\equiv 1 \bmod 8$: $\eta=\eta'=\nu_o=G_{0,1}^5\oplus G_{1,0}^5$.
		
		\item If $p\equiv 3 \bmod 8$: $\eta=\eta'=\nu_o=G_{0,1}^2 \oplus G_{1,1}^3 \oplus G_{1,0}^2$.
		
		\item If $p\equiv 5 \bmod 8$: $\eta=\eta'=\nu_o=G_{0,1}^3 \oplus G_{1,1}^2 \oplus G_{1,0}^3$.
		
		\item If $p\equiv 7 \bmod 8$: $\eta=\eta'=\nu_b=G_{1,1}^5$.	
	\end{enumerate}
	
	We deduce that the following Newton polygons are not PEL-decomposable and thus occur as
	the Newton polygon of the Jacobian of a smooth curve in $Z^0(8,4,a)$: 
	\begin{enumerate}
		\item for $p\equiv 1 \bmod 8$, $\nu_b=G_{0,1}^3\oplus G_{1,1}^2\oplus G_{1,0}^3$; 
		\item for $p\equiv 3 \bmod 8$, $\nu_b=G_{1,3}\oplus G_{3,1}\oplus G_{1,1}$;
		\item for $p\equiv 5 \bmod 8$, $\nu_b= G_{0,1} \oplus G_{1,3}\oplus G_{3,1}\oplus G_{1,0}$.
		\item for $p\equiv 7\bmod 8$,  $\nu_o=G_{0,1}^2 \oplus G_{1,1}^3 \oplus G_{1,0}^2$.
	\end{enumerate}
	Thus, combining Propositions \ref{muord} and \ref{indecomp}, in this example we conclude that all Newton polygons  $\nu\in B(\mu_m,\cf)$ occur on $Z^0(8,4,a)$,  thus occur  as
	Newton polygons of the Jacobian of a smooth curve, except for $p\equiv 7\bmod 8$ and $\nu=G_{1,1}^5$. This last case is 
addressed by Proposition \ref{basic}.
	\end{example}

\subsection{Basic Newton polygons} \label{Ssmbasic} \mbox{}\\

Consider a special monodromy datum $(m,N,a)$, and a prime $p$ with $p \nmid m$, such that the set $\nu(B(\mu_m,\cf))$ 
contains exactly two Newton polygons, namely the $\mu$-ordinary polygon $\nu_o$ and the basic polygon $\nu_b$.  
In other words, assume that the closed (basic) Newton stratum has codimension $1$.
Note that this condition depends on $p$, as seen in the tables. 
For the special families of \cite[Table~1]{moonen}, this condition implies that the dimension of $Z=Z(m,N,a)$ is either $1$ or $2$, or equivalently $N=4,5$.  

Under this condition, we prove that the basic polygon $\nu_b$ occurs as the Newton polygon of a smooth curve in $Z$.

\begin{proposition}\label{basic}
For any special monodromy datum $(m,N,a)$ with $N=4$, and any sufficiently large prime $p$ with $p\nmid m$, the basic polygon $\nu_b\in
\nu(B(\mu_m,\cf))$ 
occurs as the Newton polygon of the Jacobian of a smooth curve in $Z(m,N,a)$. 
\end{proposition}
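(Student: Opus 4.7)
The plan is to compare the growth of the basic Newton stratum $Z_b \subset Z = Z(m,N,a)$ with a $p$-independent bound on the boundary $Z \setminus Z^0$. Since $N=4$, $Z$ has dimension $1$; if the basic and $\mu$-ordinary Newton polygons coincide then Proposition~\ref{muord} applies, so I assume they differ. The basic stratum $Z_b \subsetneq Z$ is then closed of codimension at least one, hence a finite set of closed points, and it suffices to produce one basic point lying in $Z^0$.

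First I would show $|Z_b| \to \infty$ as $p \to \infty$. Because the datum is special, $Z$ is the irreducible component $S(m,N,a)$ of the PEL-type Shimura variety $\Sh(\mu_m, \cf)$. The signature decomposition of $\cf$ in the $N=4$ Moonen families produces exactly one orbit $\co_0 \in \CO$ whose complex place contributes signature $(1, n-1)$ with $n = g(\co_0) \geq 2$, while every remaining orbit contributes definite signature. This places $Z$ into the exact framework of Theorem~\ref{prop_irredcomp} of the appendix, which asserts that the number of geometrically irreducible components of the basic locus grows without bound with $p$ (outside the excluded inert-even case). Since each component of the basic locus corresponds to a distinct closed point of $Z_b$, we conclude $|Z_b| \to \infty$.

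Next I would bound $|Z \setminus Z^0|$ independently of $p$. By Remark~\ref{Raddmiss} and Notation~\ref{Nadddeg}, every boundary point of $Z$ arises from an admissible compact-type degeneration $(\alpha_1, \tilde{\alpha}_2)$ of the inertia datum $a$. The set of such admissible pairs is finite and depends only on $(m,N,a)$. Moreover, $N_1 + N_2 = N + 2 = 6$ together with $N_i \geq 3$ forces $N_1 = N_2 = 3$, so each boundary stratum is a product of zero-dimensional loci $Z_1 \times Z_2$, contributing finitely many points bounded uniformly in $p$. Comparing the two estimates yields $|Z_b| > |Z \setminus Z^0|$ for $p$ sufficiently large, so at least one basic point must lie in $Z^0$ and gives a smooth curve with Newton polygon $\nu_b$.

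The main obstacle lies in step one: translating the appendix's component count for the unitary Shimura curve to a count of basic points of $Z_b$, and verifying (by a case analysis of the twenty Moonen families) that the signature structure of the $N=4$ cases always exhibits a single $(1, n-1)$ orbit and avoids the inert-even exclusion of Theorem~\ref{prop_irredcomp}. Step two is an elementary combinatorial enumeration, and the final comparison is automatic once the two bounds are in hand.
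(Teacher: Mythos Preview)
Your approach is essentially the paper's: bound the boundary $Z\setminus Z^0$ by a $p$-independent constant, show the basic locus has unboundedly many components via the appendix, and compare. Two points to tighten. First, the indefinite signature $(1,n-1)$ is attached to a real place $\{\tau,\tau^*\}$ of the CM field, not to a Frobenius orbit $\co\in\CO$ (which depends on $p$); and since $\dim Z=N-3=1$ forces $\sum_{\{\tau,\tau^*\}}\cf(\tau)\cf(\tau^*)=1$, the unique indefinite place has $\cf(\tau)=\cf(\tau^*)=1$, so $n=2$ always---this is what makes the inert case of Theorem~\ref{prop_irredcomp} go through ($n$ even), not a case-by-case check. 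Second, because $\QQ[\mu_m]=\prod_{d\mid m}K_d$, the Shimura variety $\Sh(\mu_m,\cf)$ is in general a product of unitary pieces (e.g.\ $M[19]$ has a $K_3$-factor and a $K_9$-factor), so you must invoke Corollary~\ref{cor_irredcomp} rather than Theorem~\ref{prop_irredcomp} directly, exactly as the paper does.
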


The key input of the proof is that the number of irreducible components of the basic locus in the $\bmod p$ reduction of a 
simple unitary Shimura variety is unbounded as $p$ goes to $\infty$. 
We prove this statement in Theorem \ref{prop_irredcomp} in the appendix.

\begin {proof}
For $N=4$ (dimension $1$), the set $\nu(B(\mu_m,\cf))$ contains exactly two Newton polygons for any prime $p\nmid m$.
We argue by contradiction; let us assume that the basic Newton stratum $Z_p(\nu_b)$ is contained in the mod $p$ fiber $\partial Z_p$ of the boundary $\partial Z:=Z-Z^0$.
Since the closed substack $Z_p(\nu_b)$ has codimension 1 in $Z_p$, if $ Z_p(\nu_b)\subset \partial Z_p$, then it is equal to a union of irreducible components of $\partial Z_p$, which represent the singular curves in the family. 
On one hand, the number of irreducible components of the mod $p$ fiber $\partial Z_p$ has an natural upper bound which is independent of $p$. 

 On the other hand, the number of irreducible components 
of the basic Newton stratum of the $\bmod p$ reduction of any irreducible component of the unitary Shimura variety attached to the special monodromy datum $(m, 4, a)$ grows to infinity with $p$ (see appendix Theorem \ref{prop_irredcomp}, Corollary \ref{cor_irredcomp}).  Hence the contradiction, for $p$ sufficiently large.
\end{proof}

\begin{example}
For family $M[17]$, the Shimura variety $S(7,4, (2,4,4,4))$ has dimension $1$. 
We deduce that for $p$ sufficiently large, and $p\equiv 3,5,6 \bmod 7$, the basic Newton polygon $\nu_b=G_{1,1}^6$ occurs as the Newton polygon of the Jacobian of a smooth curve in $Z$. 
\end{example}

\begin{remark}
When $N=5$ (dimension $2$), to apply Theorem \ref{prop_irredcomp} 
we have to further assume $p$ is split in the quadratic imaginary extension.  
This condition excludes primes $p\equiv -1\bmod m$ for the special families $M[6]$, $M[8]$, and $M[14]$, 
and primes $p\not\equiv 1\bmod 5$ for $M[16]$. 
\end{remark}

\subsection{Conclusion}\label{conclusion}\mbox{}\\

We apply the three criteria above to the special families in \cite[Table 1]{moonen} 
to prove that all but a few of the Newton polygons occur for the Jacobian of a smooth curve in the family.

Let $M[r]$ denote one of the special families from \cite[Table 1]{moonen} and $(m,N,a)$ its monodromy datum.
Section \ref{sec_table} contains the list of all the Newton polygons $\nu \in \nu(B(\mu_m, \cf))$.
Note that $\dim(Z(m,N,a))$ equals $3$ for the family $M[r]$ when $r=2,10$, 
equals $2$ for $r= 6, 8, 14, 16$, and equals $1$ otherwise.  

\begin{theorem} \label{onemore}
Let $(m,N,a)$ denote the monodromy datum for one of the special families from \cite[Table 1]{moonen}.  Assume $p \nmid m$.   
Let $\nu \in \nu(B(\mu_m, \cf))$ be a Newton polygon occuring on $Z(m,N,a)$. Then $\nu$ occurs on $Z^0(m,N,a)$, 
meaning that $\nu$ occurs as the Newton polygon of the Jacobian of a smooth curve in the family unless either:
\begin{enumerate}
\item $\dim(Z(m,N,a))=1$, $\nu$ is supersingular, and $p$ is not sufficiently large; or
\item $\dim(Z(m,N,a))\geq 2$, and $\nu$ is supersingular.
\end{enumerate}
\end{theorem}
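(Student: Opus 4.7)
The plan is to verify the theorem by a finite case check over the twenty special monodromy data of \cite[Table~1]{moonen} and the residue classes of $p$ modulo $m$. For each family the tables of Section~\ref{sec_table} enumerate $\nu(B(\mu_m,\cf))$ explicitly, so the task reduces to confirming, polygon by polygon, that each listed $\nu$ (apart from the stated exceptions) is realized by a smooth curve in the family.

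For each $\nu \in \nu(B(\mu_m,\cf))$, I would try three criteria in order. First, if $\nu = \nu_o$ is the $\mu$-ordinary polygon, Proposition~\ref{muord} gives the result directly. Second, if $\nu$ is PEL-indecomposable in the sense of Definition~\ref{Dpeldec}, Proposition~\ref{indecomp} applies. Third, if $\dim Z = 1$, $\nu = \nu_b$ is the basic polygon, and $p$ is sufficiently large, Proposition~\ref{basic} applies. Inspection of the tables (combined with the catalogue of admissible degenerations of $a$ provided by Lemma~\ref{degen}) shows that in the $\dim Z = 1$ families every non-$\mu$-ordinary polygon is either PEL-indecomposable or supersingular-basic, so these three criteria together dispatch every case except those excluded in (1).

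The remaining polygons occur only for families with $\dim Z \geq 2$ and are PEL-decomposable, non-$\mu$-ordinary, and non-supersingular. For these I would invoke the de~Jong--Oort purity theorem for the Newton polygon stratification together with the geometric description of the boundary from Remark~\ref{Raddmiss} and Section~\ref{SPELind}. The boundary divisor $\partial Z := Z - Z^0$ decomposes into finitely many components, each corresponding to an admissible degeneration of compact type; on such a component only Newton polygons of the form $\nu_1\oplus\nu_2^r$ compatible with the degeneration can appear. For a given PEL-decomposable $\nu$ one identifies the finite list of compatible boundary components (via Lemma~\ref{degen}), and then compares codimensions: purity forces the codimension of $Z(\nu)$ in $Z$ to be bounded by the poset-theoretic distance from $\nu_o$ to $\nu$ in $\nu(B(\mu_m,\cf))$, while each intersection with a boundary component gains an extra unit of codimension. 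A numerical inequality, checked case by case, then shows $Z(\nu)$ is not contained in the boundary, and hence meets $Z^0$.

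The main obstacle is precisely this case-by-case verification: for each family and each residue class of $p$ modulo $m$, one must pair every PEL-decomposable, non-ordinary, non-supersingular polygon with its compatible admissible degenerations and check the codimension inequality arising from purity. The supersingular exclusions in the statement mark where the argument runs out of room: when $\dim Z \geq 2$, purity alone is too weak to force $Z(\nu_{ss})$ into $Z^0$; when $\dim Z = 1$, one is forced back onto Proposition~\ref{basic}, whose proof via Theorem~\ref{prop_irredcomp} on the growth of the number of basic irreducible components requires $p$ to be sufficiently large.
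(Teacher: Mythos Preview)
Your plan is sound and would succeed, but the treatment of the $\dim Z\geq 2$ families differs from the paper's in a way worth noting. You propose to handle each intermediate PEL-decomposable, non-supersingular polygon $\nu$ by a direct dimension comparison: bound $\dim Z(\nu)$ from below and check, boundary component by boundary component, that the $\nu$-locus on $\partial Z$ is strictly smaller. The paper avoids this case-by-case analysis entirely. Its key observation (read off from Lemma~\ref{degen}) is that whenever $\dim Z\geq 2$ and $\nu_b$ is \emph{not} supersingular---which in these families occurs exactly when $p\equiv 1\bmod m$---the basic polygon $\nu_b$ is itself PEL-indecomposable. Proposition~\ref{indecomp} then gives $Z^0[\nu_b]\neq\emptyset$, and a single invocation of de~Jong--Oort purity on $Z^0$, together with the total ordering of $\nu(B(\mu_m,\cf))$ and the equality $\codim(Z^0[\nu_b],Z^0)=\#\nu(B(\mu_m,\cf))-1$, forces every intermediate stratum to be non-empty in $Z^0$. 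So the paper anchors the purity argument at the bottom of the chain and propagates upward; you instead treat each intermediate polygon on its own. Both work, but the paper's route requires checking PEL-indecomposability of a single polygon per case rather than running a dimension comparison for each of the half-dozen intermediate ones.

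One imprecision: the bound ``$\codim_Z Z(\nu)\leq$ poset distance from $\nu_o$ to $\nu$'' is not a consequence of de~Jong--Oort purity alone. It holds here because $Z$ is (a component of) the Shimura variety $S$, where the Newton strata are non-empty (Theorem~\ref{TNPset}) and the poset is totally ordered; combining these with purity gives the codimension equality, not merely an inequality. Once stated this way your comparison with the boundary goes through.
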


In cases (1) and (2), we prove in \cite[Theorem 6.1]{LMPT3} that there exists a smooth curve in the family which is supersingular for sufficiently large $p$.

\begin{proof}
For the family $M[1]$ (resp.\ $M[2]$), where $m=2$, 
the Shimura variety $Z(m,N,a)$ is simply ${\mathcal A}_1$ (resp.\ ${\mathcal A}_2$).
Thus all Newton polygons for $M[1]$ and $M[2]$ are already known to occur for Jacobians of smooth curves. 

Consider one of the families $M[3-20]$, where $m \geq 3$.		
For the $\mu$-ordinary Newton polygon $\nu_o$, the result follows from Proposition \ref{muord}.
Suppose that $\nu \in \nu(B(\mu_m, \cf))$ is not $\mu$-ordinary.
We distinguish two cases: $\dim(Z(m,N,a))=1$ and $\dim(Z(m,N,a))\geq 2$.

Suppose $\dim(Z(m,N,a))=1$. Then the set $\nu(B(\mu_m, \cf))$ contains exactly two polygons; 
since $\nu$ is not $\mu$-ordinary, then $\nu$ is the basic Newton polygon $\nu_b$. 
Direct computations (Lemma \ref{decomp}) show that if 
$\nu_b$ is not supersingular, then it is PEL-indecomposable and the result follows from Proposition \ref{indecomp}.
If $\nu_b$ is supersingular, by Proposition \ref{basic}, $\nu_b$ occurs for a smooth curve if $p$ is sufficiently large.

Suppose $\dim(Z(m,N,a))\geq 2$ and the basic Newton polygon $\nu_b\in \nu(B(\mu_m,\cf))$ is not supersingular. 
From the tables in Section \ref{sec_table}, this occurs exactly when $p\equiv 1\bmod m$.
Again, direct computations (Lemma~\ref{decomp}) show that $\nu_b$ is PEL-indecomposable. 
By Proposition \ref{indecomp}, $\nu_b$ occurs for the Jacobian of a smooth curve in the family, 
i.e., $Z^0[\nu_b]\neq \emptyset$, where $Z^0=Z^0(m,N,a)$.
Furthermore, note that $\nu(B(\mu_m, \cf))$ is totally ordered and 
$$\codim(Z^0[\nu_b],Z^0)=\# \nu(B(\mu_m,\cf))-1.$$
By the de Jong--Oort purity result for the stratification by Newton polygons \cite[Theorem~4.1]{JO}, 
we deduce that $Z^0[\nu]$ is non empty, for all $\nu\in \nu(B(\mu_m, \cf))$.
 
Suppose $\dim(Z(m,N,a))\geq 2$ and the basic Newton polygon $\nu_b\in \nu(B(\mu_m,\cf))$ is supersingular. The proof is complete unless $\nu(B(\mu_m,\cf))$ contains more than $2$ Newton polygons. From the tables in Section \ref{sec_table},
this happens only for the family $M[10]$ when $p \equiv 2 \bmod 3$, in which case $\nu(B(\mu_m, \cf))$ 
contains exactly three polygons: the $\mu$-ordinary $\nu_0$; 
the basic Newton polygon $\nu_b$; and $\eta=(1/4,3/4)$, which is PEL-indecomposable. 
By Proposition \ref{indecomp}, $\eta$ occurs for the Jacobian of a smooth curve in the family $M[10]$.
\end{proof}

\section{Tables}\label{sec_table} 
The following tables contain all the Newton polygons and the $\mu$-ordinary Dieudonn\'e modules
which occur for the cyclic covers of the projective line 
arising from the special families of Moonen.
The label $M[r]$ denotes the $r$th label in \cite[Table 1]{moonen}. 
We organize the tables for $M[1]-M[20]$ by the value of $m$ starting with the ones with $\phi(m) = 1,2$.

\begin{notation}
	The degree of the cover is $m$, the inertia type is $a=(a_1, \ldots, a_N)$, the 
	signature is $\cf=(f(1), \ldots, f(m-1))$.  The prime orbits are denoted by $\CO$.
\end{notation}

\begin{notation}
	Let $ord=\{0,1\}$ (resp.\ $ss=\{1/2,1/2\}$) be the ordinary (resp.\ supersingular) Newton polygon. 
	For $s,t\in \NN$, with $s<t$ and ${\rm gcd}(s,t)=1$, 
	let $(s/t, (t-s)/t)$ denote the Newton polygon with slopes $s/t$ and  $(t-s)/t$, each occuring with multiplicity $t$. 
\end{notation}

\begin{notation}
	As in Section \ref{SnotDM}, let $\EE$ denote the non-commutative ring generated by $F$ and $V$
	with $FV=VF=0$.  Let EO-type stand for Ekedahl-Oort type.  Define:
	
	\begin{itemize}
		\item $L=\EE/\EE(F, V-1) \oplus \EE/\EE(V, F-1)$; it has rank $2$, $p$-rank $1$, and EO-type
		$[1]$;
		\item $N_{r,1} = \EE/\EE(F^r-V^r)$; rank $2r$, $p$-rank $0$, $a$-number $1$,
		and EO-type $[0,1, \ldots, r-1]$;
		\item $N_{r,2} = \EE/\EE(F^{r-1} -V) \oplus \EE/\EE(V^{r-1} -F)$; it has rank $2r$, $p$-rank $0$, $a$-number $2$,
		and EO-type $[0,1, \ldots, r-3, r-2, r-2]$.
	\end{itemize}
\end{notation}

\subsection{Tables for $m$ with $m=2,3,4,6$}  \mbox{ } \\

Here are the tables for Moonen's special families when $\phi(m) \leq 2$.
Because of previous work, e.g., \cite{AP:gen, Pr:large, LMPT}, we do not produce any new examples of Newton polygons and 
Dieudonn\'e modules for smooth curves in this subsection.

When $\phi(m) \leq 2$, each orbit of the eigenspaces under Frobenius has length $1$ or $2$.  
By Examples \ref{Esiglen1} and \ref{Esiglen2}, this implies that the $\mu$-ordinary Dieudonn\'e module $N_o$ is determined by the 
$\mu$-ordinary Newton polygon $\nu_o$; if $\nu_o= ord^{a} \oplus ss^{b}$, 
then $N_o \simeq L^a \oplus N_{1,1}^b$.

For $m=2$ with $p$ odd:
Family $M[1]$ has $a=(1,1,1,1)$, $\cf=(1)$, and $ord$ and $ss$ occur.
Family $M[2]$ has $a=(1,1,1,1,1,1)$, $\cf=(2)$, and $ord^2$, $ord \oplus ss$, and $ss^2$ occur.

\begin{center}
	\begin{tabular}{  |c|c|c|c|c|  }
		\hline
		$m=3$ & 	$a$ & $\cf$ & $p \equiv 1 \bmod 3$ & $p \equiv 2 \bmod 3$ \\
		\hline
		$M[3]$ & $(1,1,2,2)$  &  $(1,1)$ &  $ord^2$ &  $ord^2$  \\
		&&& $ss^2$  & $ss^2$ \\
		\hline
		$M[6]$ & $(1,1,1,1,2) $ &  $(2,1)$ & $ ord^3$ & $ord^2 \oplus ss$ \\
		&&& $ord \oplus ss^2$  & $ss^3$ \\
		&&& $(1/3,2/3)$  &\\
		\hline
		$M[10]$ & $(1,1,1,1,1,1)$ & $(3,1)$ & $ ord^4$ & $ord^2 \oplus ss^2$\\
		&&& $ord^2 \oplus ss^2$  & $(1/4,3/4)$ \\
		&&& $ord \oplus (1/3,2/3)$  & $ss^4$ \\
		&&& $(1/4,3/4)$  &  \\
		\hline
	\end{tabular}
\end{center}

\begin{center}
	\begin{tabular}{  |c|c|c|c|c|  }
		\hline
		$m=4$ & 	$a$ & $\cf$ & $p \equiv 1 \bmod 4$ & $p \equiv 3 \bmod 4$ \\
		\hline
		$M[4]$ & $(1,2,2,3)$  &  $(1,0,1)$ &  $ord^2$ &  $ord^2$\\
		&&& $ss^2$  & $ss^2$ \\
		\hline
		$M[7]$ & $(1,1,1,1)$  & $(2,1,0)$ & $ord^3$ & $ord \oplus ss^2$ \\
		&&& $ord^2 \oplus ss$  & $ss^3$ \\
		\hline
		$M[8]$ & $(1,1,2,2,2) $  & $(2,0,1)$ & $ ord^3$ & $ord^2 \oplus ss$\\
		&&& $ord \oplus ss^2$  & $ss^3$ \\
		&&& $(1/3,2/3)$  &\\
		\hline
	\end{tabular}
\end{center}

\begin{center}
	\begin{tabular}{  |c|c|c|c|  }
		\hline
		$m=6$ & $p$ & $1 \bmod 6$ & $ 5 \bmod 6$\\
		\hline
		$a$ &\backslashbox{$\cf$}{$\CO$} & split & $(1,5),(2,4),(3)$ \\
		\hline
		(2,3,3,4)  &  (1,0,0,0,1) & $ord^2$ &  $ord^2$ \\
		$M[5]$ && $ss^2$ & $ss^2$\\
		\hline
		(1,3,4,4)  &  (1,1,0,0,1) & $ord^3$ &  $ord^2 \oplus ss $ \\
		$M[9]$&& $ord \oplus ss^2$ & $ss^3$\\
		\hline
		(1,1,1,3)  &  (2,1,1,0,0) & $ord^4$ &  $ord \oplus ss^3$ \\ 
		$M[12]$ & &  $ord^3 \oplus ss $ &    $ss^4$\\
		\hline
		(1,1,2,2)  &  (2,1,0,1,0) & $ord^4$ &  $ord^2 \oplus ss^2$\\
		$M[13]$ && $ ord^2 \oplus ss^2$ & $ss^4$\\
		\hline
		(2,2,2,3,3)  &  (2,0,0,1,1) & $ ord^4$ &  $ ord^2 \oplus ss^2$\\
		$M[14]$&& $ ord^2 \oplus ss^2$ & $ ss^4$\\
		&& $ord \oplus (1/3,2/3)$ &\\
		\hline
	\end{tabular}
\end{center}

\subsection{Tables for $m$ with $m=5,7,8,9,10,12$} \label{tablemlarge}\mbox{} \\

Here are the tables of Newton polygons for Moonen's special families when $\phi(m) > 2$.
We include the $\mu$-ordinary Dieudonn\'e module $N_o$ when the prime orbits have $\#\co > 2$; 
when $\# \co \leq 2$, then $N_o$ can be computed from Examples \ref{Esiglen1} and \ref{Edieumod2}.

\begin{center}
\begin{small}
	\resizebox{\columnwidth}{!}{
		\begin{tabular}{  |c|c|c|c|c|  }
			\hline
			$m=5$	& $p$ & $ 1 \bmod 5$ & $2,3 \bmod 5$ & $ 4 \bmod 5$\\
			\hline
			$a$ &\backslashbox{$\cf$}{$\CO$} & split & $(1,2,3,4)$ & $(1,4)$, $(2,3)$  \\
			\hline
			$(1,3,3,3)$  &  $(1,2,0,1)$ &  $ord^4$ &  $(1/4,3/4)$ & $ord^2 \oplus ss^2$\\ $M[11]$ & &$ord^2 \oplus ss^2$ & $ss^4$ &    $ss^4$\\
			\hline
			$(2,2,2,2,2) $ &  $(2,0,3,1)$ & $ ord^6$ &  $ (1/4,3/4) \oplus ss^2$ & $ ord^2 \oplus ss^4$\\
			$M[16]$&& $ ord^4 \oplus ss^2$ & $ ss^6$ & $ ss^6$\\
			&& $ord^3 \oplus (1/3,2/3)$ & &\\
			\hline
		\end{tabular}
	}
\end{small}
\end{center}

When $p \equiv 2,3 \bmod 5$: for $M[11]$, $N_o \simeq N_{4,2}$;
for $M[16]$, $N_o \simeq N_{4,2} \oplus N_{2,1}$.


\begin{small}
	
	\begin{center}
		\resizebox{\columnwidth}{!}{
			\begin{tabular}{  |c|c|c|c|c|c|  }
				\hline
				$m=7$ & $p$ & $1 \bmod 7$ & $2,4 \bmod 7$& $3,5 \bmod 7$ & $6 \bmod 7$\\
				\hline
				$a$ &\backslashbox{$\cf$}{$\CO$} & split & $(1,2,4),(3,5,6)$ & $(1,2,3,4,5,6)$ & $(1,6),(2,5),(3,4)$  \\
				\hline
				(2,4,4,4)  &  (1,2,0,2,0,1) & $ord^6$ &  $ord^3 \oplus (1/3,2/3)$ & $(1/3,2/3)^2$& $ord^2 \oplus ss^4 $ \\ 
				$M[17]$ & &  $ord^4 \oplus ss^2 $ &    $(1/6,5/6)$& $ss^6 $&$ss^6 $\\
				\hline
			\end{tabular}
		}
	\end{center}
\end{small}
\begin{small}
	
	By Example \ref{EOM17}, for $M[17]$:
	when $p \equiv 2,4 \bmod 7$, then $N_o \simeq L^3 \oplus N_{3,2}$;
	when $p \equiv 3,5 \bmod 7$,
	then $N_o \simeq \EE\langle e_5,e_6 \rangle/\EE(F^3 e_5 -V e_6, V e_5 - F e_6) 
	\oplus \EE\langle e'_3, e'_5 \rangle/\EE(Fe_3'- V^3 e_5', Ve_3'-Fe_5')$.
	
	\begin{center}
		\resizebox{\columnwidth}{!}{
			\begin{tabular}{  |c|c|c|c|c|c|  }
				\hline
				$m=8$ & $p$ & $1 \bmod 8$ & $ 3 \bmod 8$& $ 5\bmod 8$ & $ 7 \bmod 8$\\
				\hline
				\multirow{2}{*}{a}& \multirow{2}{*}{\backslashbox{$\cf$}{$\CO$}} & \multirow{2}{*}{split}  & $(1,3),(2,6)$ & $(1,5),(3,7)$ & $(1,7),(2,6)$  \\
				& &  & $(5,7),(4)$ & $(2),(4),(6)$ & $(3,5),(4)$  \\
				\hline
				(2,4,5,5)  &  (1,1,0,0,2,0,1) & $ord^5$ &  $ord^2 \oplus ss^3$ & $ord^3 \oplus ss^2$& $ord^2 \oplus ss^3$ \\  
				$M[15]$ & &  $ord^3 \oplus ss^2$ &   $(1/4,3/4) \oplus ss$& $ord \oplus (1/4,3/4) $&$ss^5 $\\
				\hline
			\end{tabular}
		}
	\end{center}
\end{small}

\begin{center}
	\begin{small}
		\resizebox{\columnwidth}{!}{
			\begin{tabular}{  |c|c|c|c|c|c|  }
				\hline
				$m=9$ & $p$ & $ 1 \bmod 9$ & $ 2,5 \bmod 9$& $4,7 \bmod 9$ & $8 \bmod 9$\\
				\hline
				\multirow{2}{*}{a}& \multirow{2}{*}{\backslashbox{$\cf$}{$\CO$}} & \multirow{2}{*}{split} & $(1,2,4,8,7,5)$ & $(1,4,7),(2,8,5)$ & $(1,8),(2,7)$  \\
				& &  & $(3,6)$ & $(3),(6)$ & $(4,5),(3,6)$  \\
				\hline
				(3,5,5,5)  &  (1,2,0,2,0,1,0,1) & $ord^7$ &  $(1/3,2/3)^2 \oplus ss$ & $ord \oplus (1/3,2/3)^2$& $ord^2 \oplus ss^5$ \\ 
				
				$M[19]$& &  $ord^5 \oplus ss^2$ &    $ss^7$& $ord \oplus ss^6 $&$ss^7 $\\
				\hline
			\end{tabular}
		}
	\end{small}
\end{center}

By Example \ref{EOM19}, for $M[19]$: when $p \equiv 4,7 \bmod 9$, then $N_o \simeq L \oplus N_{3,2}^2$;
when $p \equiv 2,5 \bmod 9$, then $N_o \simeq \EE/\EE(F^4-V^2) \oplus \EE/\EE(V^4-F^2) \oplus \EE/\EE(F-V)$.

\begin{center}
	
	\begin{tabular}{  |c|c|c|c|c|  }
		\hline
		$m=10$ & $p$ & $ 1 \bmod 10$ & $3,7 \bmod 10$ & $ 9 \bmod 10$\\
		\hline
		\multirow{2}{*}{a}& \multirow{2}{*}{\backslashbox{$\cf$}{$\CO$}} & \multirow{2}{*}{split} & $(1,3,9,7)$ & $(1,9),(2,8)$   \\
		& &  & $(2,6,8,4),(5)$ & $(3,7),(4,6),(5)$  \\
		\hline
		(3,5,6,6)  &  (1,1,0,1,0,0,2,0,1) & $ord^6$ &  $(1/4,3/4) \oplus ss^2$ &  $ord^2 \oplus ss^4$ \\  
		$M[18]$ & &  $ord^4 \oplus ss^2$ &  $ss^6 $&$ss^6 $\\
		\hline
	\end{tabular}
\end{center}

For $M[18]$:
when $p \equiv 3,7 \bmod 10$, then $N_o \simeq N_{4,2} \oplus N_{2,1}$.


\begin{center}
	
	\begin{small}
		\resizebox{\columnwidth}{!}{
			\begin{tabular}{  |c|c|c|c|c|c|  }
				\hline
				$m=12$ & $p$ & $1 \bmod 12$ & $5 \bmod 12$ & $ 7 \bmod 12$&  $ 11 \bmod 12$\\
				\hline
				\multirow{3}{*}{a}& \multirow{3}{*}{\backslashbox{$\cf$}{$\CO$}} & \multirow{3}{*}{split} &  &$(1,7),(3,9)$ & $(1,11),(4,8)$  \\
				& & & $(1,5),(2,10),(3)$ & $(2),(4),(5,11)$ & $(3,9),(2,10)$  \\
				&&  & $(4,8),(6),(7,11),(9)$ & $(6),(8),(10)$ & $(5,7),(6)$ \\
				\hline
				(4,6,7,7)  &  (1,1,0,1,0,0,2,0,1,0,1) & $ord^7$ &  $ord^3 \oplus ss^4$ & $ord^4 \oplus ss^3$& $ord^2 \oplus ss^5$ \\  
				$M[20]$ & &  $ord^5 \oplus ss^2$ &    $ord \oplus (1/4,3/4) \oplus ss^2$& $ord^2 \oplus ss^5 $&$ss^7 $\\
				\hline
				
			\end{tabular}
		}
	\end{small}
\end{center}

\subsection{PEL-decomposable Newton polygons}

The next lemma contains the data of all the PEL-decomposable Newton polygons for families $M[3-20]$ from \cite[Table~1]{moonen}. We refer to Remark \ref{Raddmiss} and Notation \ref{Nadddeg}
for explanation and to Example \ref{Em15add} for a sample calculation.

\begin{lemma} \label{degen}\label{decomp}
	In the following tables, for the special families from 
	\cite[Table 1]{moonen},
	the fourth column lists the degenerations of compact type of the inertia type and  
	the last column lists the PEL-decomposable Newton polygons 
	for the family under the given congruence condition on $p$ modulo $m$
	(excluding the $\mu$-ordinary Newton polygon when $p\equiv 1\bmod m$
	\footnote{The $\mu$-ordinary Newton polygon for $p\equiv 1\bmod m$ is ordinary, and always PEL-decomposable.}).
	
\begin{small}
\begin{center}	
	\begin{tabular}{ |c|c|c|c|c|  }
		\hline
label& m & inertia type & degeneration & PEL-decomp. NP (congruence class) \\
\hline
$M[3]$ & 3 & (1,1,2,2) & (1,1,1)+(2,2,2) & $ss^2$, $p \equiv 2$ \\
\hline
\multirow{3}{*}{$M[6]$} & \multirow{3}{*}{3}& \multirow{3}{*}{(1,1,1,1,2)} &\multirow{3}{*} {(1,1,1)+(1,1,2,2)} &  $ord \oplus ss^2$, $p \equiv 1$ \\
&&&& $ord^2 \oplus ss$, $p \equiv 2$\\ &&&& $ss^3$, $p \equiv 2$ \\
\hline
\multirow{3}{*}{$M[10]$} &\multirow{3}{*}{3}& \multirow{2}{*}{(1,1,1,1,1,1)} & \multirow{3}{*}{(1,1,1) + (2,1,1,1,1)} & $ord^2 \oplus ss^2$, $p \equiv 1, 2$ \\
&&&&   $ord\oplus (1/3,2/3)$, $p \equiv 1$\\ &&&& $ss^4$, $p \equiv 2$ \\
\hline
%
$M[4]$ & 4 & (1,2,2,3) & (1,1,2)+(2,3,3) & $ss^2$, $p \equiv 3$ \\
\hline
$M[7]$ &4& (1,1,1,1) & None & None \\
\hline
\multirow{3}{*}{$M[8]$} & \multirow{3}{*}{4}& \multirow{3}{*}{(1,1,2,2,2)} & {(1,2,2,3)+(1,1,2)} &$ord \oplus ss^2$, $p \equiv 1$ \\
&&&{(1,1,2) + ${\rm Ind}_2^4$(1,1,1,1)} & $ord^2 \oplus ss$, $p \equiv 3$\\
&&&&$\sss^3$, $p \equiv 3$ \\
%
\hline
$M[5]$ & 6 & (2,3,3,4) & (1,2,3)+(3,4,5) & $ss^2$, $p \equiv 5$ \\
\hline

\multirow{2}{*}{$M[9]$} & \multirow{2}{*}{6} & \multirow{2}{*}{(1,3,4,4)} & (1,1,4)+(3,4,5) &  \multirow{2}{*}{$ss^3$, $p \equiv 5$} \\
&&& (1,2,3) + ${\rm Ind}_3^6$(2,2,2) & \\
\hline

$M[12]$  & 6 & (1,1,1,3) & (1,1,4) + (2,1,3) &$ss^4$, $p \equiv 5$ \\
\hline 
$M[13]$  &6& (1,1,2,2) & 
(1,1,4) + ${\rm Ind}_3^6$(1,1,1) &$ss^4$, $p \equiv 5$ \\ 
\hline
\multirow{2}{*}{$M[14]$}  &\multirow{2}{*}{6}& \multirow{2}{*}{(2,2,2,3,3)} & (2,3,1) + (5,3,2,2) & $ord^2 \oplus ss^2$, $p \equiv 1,5$ \\
&&& $ (2,3,3,4) + {\rm Ind}_3^6(1,1,1)$ & $ss^4$, $p \equiv 5$ \\
\hline 
\end{tabular}
\end{center}
\end{small}

\begin{small}
	\begin{center}	
		\begin{tabular}{ |c|c|c|c|c|  }
			\hline
			label & $m$ & $a$ & degenerations & PEL-dec. NP, congruence on $p$ \\
			\hline	
$M[11]$ & 5 & (1,3,3,3) & (1,3,1) + (4,3,3) &$ss^4$, $p \not\equiv 1$ \\		
\hline
						\multirow{4}{*}{$M[16]$} & \multirow{4}{*}{5} & \multirow{4}{*}{(2,2,2,2,2)} & \multirow{4}{*}{(2,2,2,4) + (1,2,2)} 
&$(1/4,3/4)\oplus ss^2$, $p \equiv 2,3$\\
			&&&&$ord^2 \oplus ss^4$, $p \equiv 4$\\
			&  & & & $ord^4 \oplus ss^2$, $p \equiv 1$\\
			&&&& $ss^6$, $p \not\equiv 1$ \\
			\hline
			\multirow{2}{*}{$M[17]$} & \multirow{2}{*}{7} &\multirow{2}{*}{(2,4,4,4)}  & \multirow{2}{*}{(4,4,6) + (1,4,2)}& $ord^3 \oplus (1/3,2/3)$, $p \equiv 2,4$\\
			&&&& $ss^6$, $p \equiv 3,5,6$ \\
			\hline
			\multirow{3}{*}{$M[15]$} & \multirow{3}{*}{8} & \multirow{3}{*}{(2,4,5,5)} & 
			(5,5,6) + ${\rm Ind}_4^8$(1,2,1) & $ord^2 \oplus ss^3$, $p \equiv 3$\\ 
			&&& (5,2,1)+(7,4,5) & $ord^3 \oplus ss^2$, $p \equiv 5$\\
			&&&& $ss^5$, $p \equiv 7$  \\
			\hline 

			\multirow{2}{*}{$M[19]$} & \multirow{2}{*}{9} & \multirow{2}{*}{(3,5,5,5)} & \multirow{2}{*}{(3,5,1) + (8,5,5)} &$ ord\oplus (1/3,2/3)^2 $, $p \equiv 4,7$\\
			&&&& $ss^7$, $p \equiv 2,5,8$ \\
			\hline
			\multirow{2}{*}{$M[18]$} & \multirow{2}{*}{10} & \multirow{2}{*}{(3,5,6,6)} & 
			(3,5,2) + ${\rm Ind}_5^{10}$ (4,3,3) & \multirow{2}{*}{$ss^6$, $p \not\equiv 1$} \\
			& & & (6,3,1) + (9,5,6) &\\ 
			\hline
			
			\multirow{3}{*}{$M[20]$} & \multirow{3}{*}{12} & \multirow{3}{*}{(4,6,7,7)} & 
			 (7,7,10) + ${\rm Ind}_6^{12}$ (1,2,3) & $ord^3 \oplus ss^4$, $p \equiv 5$\\
			&&&(7,4,1) + (11,6,7)& $ord^4 \oplus ss^3$, $p \equiv 7$ \\
			&&& &$ss^7$, $p \equiv 11$  \\
			\hline
		\end{tabular}
	\end{center}
\end{small}

	\end{lemma}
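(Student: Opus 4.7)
The plan is to verify, family by family, the two right-hand columns of the tables: first the enumeration of degenerations of compact type of the inertia type, and then the list of PEL-decomposable Newton polygons they produce under each nontrivial congruence class of $p \bmod m$.

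For the degeneration column, I will enumerate, for each monodromy datum $(m,N,a)$ in $M[3$--$20]$, all partitions of the multiset $\{a(1),\dots,a(N)\}$ into two nonempty submultisets $S_1, S_2$, modulo the $\mathrm{Sym}_N \times (\ZZ/m)^*$ equivalence on inertia types. For each such partition, the clutching entries of Notation \ref{Nadddeg} are forced: $b_1 \equiv -\sum_{S_1}\!\pmod m$ and $b_2 \equiv -b_1 \pmod m$, and since $\sum_i a(i) \equiv 0 \pmod m$ the sum-zero condition on $\alpha_1=(S_1,b_1)$ and $\tilde\alpha_2=(b_2,S_2)$ is automatic. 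I will retain only those partitions for which $b_1 \not\equiv 0\pmod m$ (so $Y_1$ is ramified at the node), for which, after dividing $\tilde\alpha_2$ by $r:=\gcd(m,b_1)$, the quotient datum $(m/r,N_2,\alpha_2)$ satisfies the three axioms of Section \ref{prelim_curve}, and for which neither factor has genus zero (since such factors contribute no abelian variety). For $N=4$ (most of the families) this leaves at most one or two partitions, and for the $N=5,6$ families the enumeration is slightly longer but still finite and short.

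For the Newton polygon column, I will, for each retained degeneration, compute the signatures $\cf_1,\cf_2$ from the Deligne--Mostow formula \eqref{DMeqn} applied to $\alpha_1$ and $\alpha_2$, and then apply the Kottwitz method of Theorem \ref{thm_VW} (conditions (D), (WA), (M) from Section \ref{sec_Kottwitz}) to each smaller PEL datum $\mathrm{Sh}(\mu_m,\cf_1)$ and $\mathrm{Sh}(\mu_{m/r},\cf_2)$. Note that the congruence class of $p\bmod (m/r)$ is determined by the class of $p\bmod m$, so the $p$-orbit structures on both factor Shimura data are forced. This yields a finite set of candidate pairs $(\nu_1,\nu_2)$; forming $\nu_1 \oplus \nu_2^r$ for every pair and every degeneration, and then discarding the ordinary polygon when $p\equiv 1\pmod m$ (per the footnote), produces the displayed list. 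Example \ref{Em15add} carries out the procedure end-to-end for $M[15]$, and the remaining families are wholly analogous.

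Two internal consistency checks guide the bookkeeping. First, every listed PEL-decomposable polygon must lie in $\nu(B(\mu_m,\cf))$ and so must appear in the tables of Section \ref{sec_table}; this cross-check catches any arithmetic slip in a factor computation. Second, the total entry count per row is bounded by $\sum_{\text{degen}} \#\nu(B(\mu_m,\cf_1))\cdot\#\nu(B(\mu_{m/r},\cf_2))$, before identifying duplicates. The main obstacle is the combinatorial enumeration: ensuring that the list of partitions up to $\mathrm{Sym}\times(\ZZ/m)^*$-equivalence is exhaustive and irredundant, which is most delicate for the higher-dimensional families $M[6], M[8], M[10], M[14], M[16], M[18]$ where distinct partitions of $a$ can yield equivalent inertia types. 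A secondary pitfall is verifying the gcd condition $\gcd(m/r,\alpha_2(1),\dots,\alpha_2(N_2))=1$ after dividing by $r$, without which the quotient datum would need further reduction; a direct check per degeneration dispatches this.
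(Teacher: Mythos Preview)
Your proposal is correct and follows essentially the same approach as the paper. The paper does not give a formal proof of this lemma; it simply states that the computations are similar in all cases, refers to Remark~\ref{Raddmiss} and Notation~\ref{Nadddeg} for the setup, and points to Example~\ref{Em15add} (the $M[15]$ case) as a worked sample, which is precisely the procedure you outline systematically.
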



\section{Applications} \label{Sapplication}

The method of the previous sections produces numerous Newton polygons and mod $p$ Dieudonn\'e modules
that occur for the Jacobian of a smooth curve.
In Section \ref{Snewapp}, we collect a list of these, focusing on the ones which are new, 
and prove a result about curves of arbitrary large genus whose Newton polygon has slopes $1/6, 5/6$.
In Section \ref{Snonsp}, for an infinite sequence of $g \in \NN$ and a set of primes of density $1/2$, 
we produce an explicit family of smooth curves of genus $g$ such that the 
multiplicity of the slope $1/2$ in the Newton polygon is at least $4\sqrt{g}$.

\subsection{Newton polygons and Dieudonn\'e modules arising from special families} \label{Snewapp}

The label $M[r]$ denotes the $r$th label in \cite[Table 1]{moonen}.

\begin{theorem}[Theorem \ref{Tintro1}]\label{Tapp1} 
There exists a smooth supersingular curve of genus $g$ defined over $\overline{\FF}_p$ 
for $p$ sufficiently large in the congruence class in the following cases:
\begin{itemize}
\item when $g=5$, $p \equiv 7 \bmod 8$, from $M[15]$;
\item when $g=6$, $p \equiv 2,3,4 \bmod 5$, from $M[18]$;
\item when $g=7$, $p \equiv 2 \bmod 3$, from $M[19,20]$.
\end{itemize}
\end{theorem}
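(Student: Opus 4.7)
The plan is to deduce each of the three cases from Proposition~\ref{basic} applied to the indicated special families from \cite[Table 1]{moonen}. The first step is to check the numerical data: each of $M[15], M[18], M[19], M[20]$ has $N=4$ branch points, so the associated moduli image $Z(m,N,a)$ has dimension $N-3=1$, and the genus formula \eqref{Egenus} yields $g=5,6,7,7$ respectively. The congruence hypotheses on $p$ guarantee $p\nmid m$ in each case, so the Newton polygon machinery of Section~\ref{sec_posdim} applies and the tables of Section~\ref{sec_table} may be consulted.

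Next I would read off the basic Newton polygon $\nu_b\in \nu(B(\mu_m,\cf))$ from those tables and verify it is supersingular in each case. For $M[15]$ at $p\equiv 7\bmod 8$, the table gives $\nu_b=ss^5$. For $M[18]$, since $p$ is odd the residues $p\equiv 2,3,4\bmod 5$ correspond to $p\equiv 7,3,9\bmod 10$, and the table entries for $p\equiv 3,7\bmod 10$ and $p\equiv 9\bmod 10$ both give $\nu_b=ss^6$. For $M[19]$, the hypothesis $p\equiv 2\bmod 3$ (with $p\ne 3$) forces $p\in\{2,5,8\}\bmod 9$, and the $M[19]$ table then gives $\nu_b=ss^7$; likewise the $M[20]$ table at $p\equiv 11\bmod 12$ yields $\nu_b=ss^7$, so the two families together cover every congruence class $p\equiv 2\bmod 3$.

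Having identified $\nu_b$ as supersingular of the correct dimension in each case, the third step is to invoke Proposition~\ref{basic}: since $\dim Z(m,N,a)=1$, for all sufficiently large $p$ in the prescribed congruence class, the polygon $\nu_b$ is the Newton polygon of the Jacobian of a smooth curve lying in $Z^0(m,N,a)$. That Jacobian is supersingular and the curve has the asserted genus, so the theorem follows.

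The substantive difficulty is not in this packaging but is already absorbed into Proposition~\ref{basic}: the ``sufficiently large $p$'' clause is needed because one must dominate the $p$-independent upper bound on the number of irreducible components of the boundary $\partial Z(m,N,a)$ by the number of geometrically irreducible components of the basic locus of the ambient unitary Shimura variety. That the latter grows to infinity with $p$ is precisely Theorem~\ref{prop_irredcomp} in the appendix; once that input is granted, the present theorem reduces to the tabular bookkeeping above.
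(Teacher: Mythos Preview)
Your proposal is correct and follows exactly the same approach as the paper's proof: read off from the tables in Section~\ref{sec_table} that the basic Newton polygon is supersingular in each listed case, then invoke Proposition~\ref{basic} (which in turn rests on Theorem~\ref{prop_irredcomp}) to conclude that $\nu_b$ is realized by a smooth curve for $p$ sufficiently large. Your write-up is simply more explicit about the congruence bookkeeping (e.g., translating $p\equiv 2,3,4\bmod 5$ into residues mod $10$ for $M[18]$, and $p\equiv 2\bmod 3$ into residues mod $9$ for $M[19]$) than the paper's two-sentence proof.
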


\begin{proof}
The results appear in Section \ref{sec_table} in the basic loci of families. 
By Proposition \ref{basic}, for $p$ sufficiently large in each congruence class, each of the supersingular Newton polygons above appears as the Newton polygon of the Jacobian of a smooth curve.
\end{proof}

\begin{remark}
\begin{enumerate}
\item In Theorem \ref{Tapp1}, we could also include: genus 4 when $p \equiv 2 \bmod 3$ from 
$M[12-13]$ or $p \equiv 2,3,4 \bmod 5$ from $M[11]$;
and genus $6$ when $p \equiv 3,5,6 \bmod 7$ from $M[17]$. 
We do not include these cases since they already appear in \cite[Theorem 1.1]{LMPT} for all $p$ satisfying the congruence condition.

\item We would like to thank Voight and Long for pointing out that the genus $6$ case of Theorem \ref{Tapp1}
can likely also be proven (without the hypothesis that $p$ is sufficiently large) using truncated hypergeometric functions.
\end{enumerate}
\end{remark}

We collect new examples of Newton polygons and Dieudonn\'e modules
from the $\mu$-ordinary locus.
The two Newton polygons with $*$ appear earlier in \cite{AP:gen, Pr:large}.

\begin{theorem} \label{TmuordNP}
There exists a smooth curve of genus $g$ defined over $\overline{\FF}_p$ with the given Newton polygon $NP$
and mod $p$ Dieudonn\'e module $DM$ in the following cases:
\begin{center}
	\begin{tabular}{ |c|c|c|c|c|  }
		\hline
		genus &	NP & DM & congruence on $p$ & where \\
		\hline
		4 & $*(1/4, 3/4)$ & $N_{4,2}$ & $2,3 \bmod 5$ & $M[11]$\\
		\hline
		5 & $ord^2 \oplus ss^3$ & $L^2 \oplus N_{1,1}^3$ & $3,7 \bmod 8$ & $M[15]$\\
		\hline
		6  & $(1/4,3/4) \oplus ss^2$  & $N_{4,2} \oplus N_{2,1}$ & $2,3 \bmod 5$  & $M[16, 18]$  \\
		\hline
		\multirow{2}{*}{6}  & \multirow{2}{*}{$ord^2 \oplus ss^4$}  & \multirow{2}{*}{$L^2 \oplus N_{1,1}^4$} & $4 \bmod 5$, $6 \bmod 7$,  & $M[16,17]$ \\
		&&& or $9 \bmod 10$ & $M[18]$ \\
		\hline
		6 & $*ord^3 \oplus (1/3,2/3)$ & $L^3 \oplus N_{3,2}$ & $2,4 \bmod 7$ & $M[17]$ \\
		\hline
		6  & $(1/3,2/3)^2 $  & Example \ref{EOM17}(2) & $3,5 \bmod 7$  & $M[17]$  \\
		\hline
		7  & $(1/3,2/3)^2 \oplus ss$  & Example \ref{EOM19}(1) & $2,5 \bmod 9$  & $M[19]$  \\
		\hline
	
		7  & $ord \oplus (1/3,2/3)^2$  & $L \oplus N_{3,2}^2$ & $4,7 \bmod 9$  & $M[19]$ \\
		\hline
		7 & $ord^4 \oplus ss^3$ & $L^4 \oplus N_{1,1}^3$ & $7 \bmod 12$ & $M[20]$\\
		\hline
		7  & $ord^3 \oplus ss^4$  & $L^3 \oplus N_{1,1}^4$ & $5 \bmod 12$  & $M[20]$ \\
		\hline
		7  & $ord^2 \oplus ss^5$  & $L^2 \oplus N_{1,1}^5$ & $8 \bmod 9$ or $11 \bmod 12$  & $M[19,20]$ \\
		\hline

			\end{tabular}
\end{center}
\end{theorem}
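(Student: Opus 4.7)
The proof is essentially a tabulation: each row of the table is already pinpointed to one (or a pair) of Moonen's special families $M[r]$, listed in the rightmost column. The plan is to verify, row by row, that the Newton polygon and mod $p$ Dieudonn\'e module claimed in the theorem coincide with the $\mu$-ordinary invariants $(\nu_o, N_o)$ of the indicated special family under the stated congruence on $p$, and then invoke Proposition~\ref{muord} to produce a smooth curve.

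First I would fix a row, say genus $g$, with family $M[r]$, monodromy datum $(m,N,a)$, and a congruence class for $p$ modulo $m$. Using \eqref{DMeqn}, I would recompute the signature type $\cf$ and the $\sigma$-orbit decomposition $\CO$ of $\CT$ under the action of $p$, and then apply the slope-multiplicity formulas \eqref{slope} and \eqref{multiplicity} from Section~\ref{muordformula} to each orbit; summing $\oplus_{\co}\mu(\co)$ yields $\nu_o$. Matching this against the $\mu$-ordinary entry of the corresponding table in Section~\ref{sec_table} verifies the Newton polygon column.

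Second, for the Dieudonn\'e module column I would apply the recipe of Section~\ref{EO}: for each orbit $\co\in\CO$, build the $\EE$-modules $N_t(\co)$ and assemble $N_o=\bigoplus_\co\bigoplus_t N_t(\co)^{E(t)-E(t+1)}$ as in \eqref{Nomerge}. For orbits of length $1$ or $2$, this reduces to Examples~\ref{Esiglen1} and~\ref{Edieumod2} and the outputs $L^a\oplus N_{1,1}^b$ are immediate. For the orbits of length $>2$ that appear in rows coming from $M[17]$ and $M[19]$ (the cases involving slopes $1/3,2/3$ and the exotic Dieudonn\'e modules), the computation is precisely the one carried out in Examples~\ref{EOM17} and~\ref{EOM19}; I would simply cite those examples. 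For $M[15]$, $M[16]$, $M[18]$, $M[20]$ the orbits relevant to $\nu_o$ all have length $\le 2$, so the claimed decompositions $L^a\oplus N_{1,1}^b$ or $N_{4,2}\oplus N_{2,1}$ follow from Example~\ref{Edieumod2} (noting $N_{4,2}$ comes from the length-$4$ self-dual orbit with signature $(1,2,0,1)$ or its analogue, which is the mod-$p$ form of the $(1/4,3/4)$ isoclinic piece produced by the Moonen--Shimura--Taniyama formula).

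Finally, with $(\nu_o, N_o)$ matched to the claimed $(NP, DM)$ in each row, Proposition~\ref{muord} applied to the family $M[r]$ (allowed since the hypothesis $p\nmid m$ is subsumed in the congruence condition) produces a smooth curve in $Z^0(m,N,a)$ whose Jacobian realizes these invariants; the genus is read from \eqref{Egenus} and agrees with the first column. There is no serious obstacle: the whole argument is bookkeeping organized by Moonen's table and the $\mu$-ordinary formulas, with the only mildly nontrivial step being the Dieudonn\'e module computations for $M[17]$ and $M[19]$, which are already carried out in Examples~\ref{EOM17} and~\ref{EOM19}.
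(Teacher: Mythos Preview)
Your approach is correct and essentially identical to the paper's: the paper's proof is the single sentence ``The result follows from Proposition~\ref{muord} since these Newton polygons and Dieudonn\'e modules occur in $\mu$-ordinary cases in Section~\ref{sec_table} under these congruence conditions,'' and your proposal is just an expanded account of what that verification entails. One small inaccuracy: for $M[11]$, $M[16]$, $M[18]$ at $p\equiv 2,3\bmod 5$ (resp.\ $3,7\bmod 10$) the relevant orbits have length $4$, not $\leq 2$, so the $N_{4,2}$ factor does not come from Example~\ref{Edieumod2} but from the general recipe of Section~\ref{EO} (these computations are stated in Section~\ref{tablemlarge}); your parenthetical acknowledges this, so the plan still goes through.
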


\begin{proof}
The result follows from Proposition \ref{muord} since
these Newton polygons and Dieudonn\'e modules occur in $\mu$-ordinary cases in Section \ref{sec_table}
under these congruence conditions. 
\end{proof}

\begin{theorem} \label{TbasicNP}
There exists a smooth curve of genus $g$ defined over $\overline{\FF}_p$ with the given Newton polygon $NP$
in the following cases:
\begin{center}
	\begin{tabular}{ |c|c|c|c|  }
		\hline
		genus &	NP & congruence on $p$ & where \\
		\hline
		5  & $(1/4,3/4) \oplus ss $  & $3 \bmod 8$  & $M[15]$  \\
		\hline
		6  & $(1/6,5/6)$  & $2,4 \bmod 7$  & $M[17]$  \\
		\hline
		7  & $ord \oplus ss^6$  & $4,7 \bmod 9$  & $M[19]$ \\
		\hline
		7  & $ord^2 \oplus ss^5$  & $7 \bmod 12$  & $M[20]$ \\
		\hline		
		7  & $ord \oplus (1/4,3/4) \oplus ss^2$  & $5 \bmod 12$  & $M[20]$ \\
		\hline
			\end{tabular}
\end{center}

\end{theorem}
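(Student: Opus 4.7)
The plan is to show that each Newton polygon listed in Theorem \ref{TbasicNP} is the basic element of $\nu(B(\mu_m,\cf))$ for the indicated special family, is not PEL-decomposable, and is therefore forced to occur on the open Torelli locus by Proposition \ref{indecomp}. All five rows of the table involve families $M[15]$, $M[17]$, $M[19]$, $M[20]$, each of which has $N=4$, so $\dim Z(m,N,a)=1$ and the set $\nu(B(\mu_m,\cf))$ consists of exactly two polygons, namely the $\mu$-ordinary polygon $\nu_o$ and the basic polygon $\nu_b$.

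First, I would cross-reference each row of the statement with the tables in Section \ref{sec_table} to identify the listed polygon as $\nu_b$: for $M[15]$ with $p\equiv 3\bmod 8$ it is $(1/4,3/4)\oplus ss$; for $M[17]$ with $p\equiv 2,4\bmod 7$ it is $(1/6,5/6)$; for $M[19]$ with $p\equiv 4,7\bmod 9$ it is $ord\oplus ss^6$; and for $M[20]$ it is $ord^2\oplus ss^5$ at $p\equiv 7\bmod 12$ and $ord\oplus (1/4,3/4)\oplus ss^2$ at $p\equiv 5\bmod 12$. In each case, the polygon contains a slope different from $1/2$, so $\nu_b$ is not supersingular, which means the conclusion of Theorem \ref{onemore} applies without the exceptional cases (1)--(2).

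Next I would check PEL-indecomposability by consulting the table in Lemma \ref{decomp}. For each of the five families and congruence classes above, the only PEL-decomposable Newton polygon that appears in $\nu(B(\mu_m,\cf))$ is $\nu_o$ itself (modulo the running convention that $\mu$-ordinary polygons at $p\equiv 1\bmod m$ are omitted). In particular the five target polygons do not arise from any degeneration of compact type of the inertia type $a$. By Proposition \ref{indecomp}, each such $\nu_b$ must occur as the Newton polygon of a point in $Z^0(m,N,a)$, i.e., of the Jacobian of a smooth $\mu_m$-cover of ${\mathbb P}^1$ in the corresponding family.

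There is essentially no hard step here: the theorem is a direct reading of the tables combined with the PEL-indecomposability criterion. The only thing to double-check carefully is the verification in Lemma \ref{decomp}'s table that, in each of the five cases, no compact-type degeneration $(\alpha_1,\tilde\alpha_2)$ of $a$ produces a splitting of $\nu_b$ compatible with the signature partition $(\cf_1,\tilde\cf_2)$. Since the list of admissible degenerations is finite and already tabulated, this amounts to a case-by-case inspection that I would present as a table parallel to Example \ref{Em15add}. Once this is recorded, Proposition \ref{indecomp} gives the conclusion uniformly and Theorem \ref{TbasicNP} follows.
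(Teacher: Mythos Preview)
Your proposal is correct and follows essentially the same approach as the paper: identify each listed polygon as the basic element $\nu_b$ from the tables in Section~\ref{sec_table}, verify via Lemma~\ref{decomp} that it is not PEL-decomposable, and conclude by Proposition~\ref{indecomp}. The paper's proof is exactly this three-line argument, with slightly less detail than you provide.
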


\begin{proof}
These Newton polygons appear in basic loci cases from Section \ref{sec_table}
under the given congruence conditions. 
By Lemma \ref{degen}, these cases are PEL-indecomposable. 
Thus, by Proposition \ref{dec}, they occur for Jacobians of smooth curves.
\end{proof}

The $p$-divisible group $G_{1, d-1} \oplus G_{d-1,1}$ has slopes $1/d, (d-1)/d$.
The next result was proven for all $p$, when $d=2$ \cite[Theorem~2.6]{FVdG}; 
$d=3$ \cite[Theorem~4.3]{Pr:large}; $d=4$ \cite[Corollary~5.6]{AP:gen}; 
and (under congruence conditions on $p$) when $d=5$ or $d=11$ in \cite[Theorem~5.6]{LMPT}.

\begin{theorem} \label{Tapp2} 
When $d=6$ and $p \equiv 2,4 \bmod 7$, for all $g \geq d$, 
there exists a smooth curve of genus $g$ defined over $\overline{\FF}_p$
whose Jacobian has $p$-divisible group isogenous
to $(G_{1, d-1} \oplus G_{d-1,1})\oplus (G_{0,1} \oplus G_{1,0})^{g-d}$:
\end{theorem}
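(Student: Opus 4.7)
The strategy is induction on $g$, with base case supplied by Theorem~\ref{TbasicNP}. Denote the target Newton polygon by $\nu_g := (G_{1,5}\oplus G_{5,1})\oplus (G_{0,1}\oplus G_{1,0})^{g-6}$. For $g = 6$, Theorem~\ref{TbasicNP} provides a smooth curve of genus $6$ over $\overline{\FF}_p$ whose Jacobian has Newton polygon $\nu_6 = G_{1,5}\oplus G_{5,1}$, arising from the basic locus of the special family $M[17]$ at $p \equiv 2, 4 \bmod 7$. For the inductive step with $g \geq 7$, assume the statement for $g - 1$, and let $X/\overline{\FF}_p$ be a smooth curve of genus $g-1$ whose Jacobian has Newton polygon $\nu_{g-1}$. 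Choose an ordinary elliptic curve $E/\overline{\FF}_p$, and form the compact-type nodal curve $C_0 := X \cup_{x = e} E$ of genus $g$ by clutching at arbitrary points $x \in X(\overline{\FF}_p)$, $e \in E(\overline{\FF}_p)$. Since $\mathrm{Jac}(C_0) \cong \mathrm{Jac}(X) \times E$, the Newton polygon of $\mathrm{Jac}(C_0)$ equals $\nu_g$; however, $[C_0]$ lies on the boundary divisor $\kappa(\overline{\CM}_{g-1,1}\times \overline{\CM}_{1,1})\subset \overline{\CM}_g$, where $\kappa$ denotes the standard clutching morphism.

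The remaining task is to smooth $C_0$ within $\overline{\CM}_g$ while preserving the Newton polygon. I would consider a versal one-parameter deformation $\mathcal{C}\to \mathrm{Spec}(R)$ over a complete discrete valuation ring $R$ with special fibre $C_0$ and smooth generic fibre $C_\eta$, chosen transverse to the boundary divisor. Grothendieck's semicontinuity of Newton polygons gives $\nu(\mathrm{Jac}(C_\eta)) \leq \nu_g$. To upgrade this to equality, I would combine three ingredients: (a) the Faber--van der Geer theorem \cite{FVdG} that the $p$-rank $g-6$ stratum of $\CM_g$ has pure codimension $6$; (b) the de Jong--Oort purity theorem for Newton polygon stratifications, which bounds the codimension of each irreducible component of the closed stratum $\{\nu \geq \nu_g\}$ in any equicharacteristic family; and (c) a dimension comparison showing that the closed stratum $\overline{V_{\nu_g}}\subset \overline{\CM}_g$ cannot be entirely contained in the boundary divisor $\kappa(\overline{\CM}_{g-1,1}\times \overline{\CM}_{1,1})$, which has codimension $1$ in $\overline{\CM}_g$.

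The main obstacle will be step (c): ruling out the possibility that $\nu(\mathrm{Jac}(C_\eta))$ drops strictly below $\nu_g$, for instance to a Newton polygon with strictly larger $p$-rank. Resolving this will require computing the codimension of $\overline{V_{\nu_g}}$ in $\CA_g$ from the combinatorics of the Newton polygon, transferring this bound to $\overline{\CM}_g$ via purity, and then comparing with the codimension of the boundary divisor. Once this comparison is established, an irreducible component of $\overline{V_{\nu_g}}$ through $[C_0]$ must extend into the open locus $\CM_g$, yielding a smooth curve with Newton polygon exactly $\nu_g$ and completing the induction.
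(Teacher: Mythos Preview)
Your base case and the clutching strategy agree with the paper. The gap is in your step~(c). Codimensions of Newton strata computed in $\CA_g$ do not transfer to $\CM_g$: for $g\geq 4$ the Torelli locus already has large codimension in $\CA_g$, so a bound there places no constraint inside $\CM_g$. And de Jong--Oort purity only asserts that Newton jumps occur in pure codimension~$1$ in a given family; it does not compute the codimension of a fixed Newton stratum in $\CM_g$, nor does it let you import such a number from $\CA_g$.

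The missing ingredient, stated explicitly in the paper, is that $(1/6,5/6)$ is the \emph{maximal} (most ordinary) Newton polygon in dimension~$6$ with $p$-rank~$0$; equivalently $\nu_g$ is maximal among height-$2g$ polygons with $p$-rank $g-6$. Consequently
\[
\{\nu=\nu_g\}=\{\nu\geq\nu_g\}\cap\{p\text{-rank}\leq g-6\},
\]
and the first set on the right is open by Grothendieck semicontinuity. Thus the locus with Newton polygon exactly $\nu_g$ is \emph{open} in the $p$-rank $g-6$ stratum, and the problem reduces from Newton strata (whose codimension in $\CM_g$ is unknown in general) to $p$-rank strata, where Faber--van der Geer gives pure codimension~$6$ directly in $\CM_g$. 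The paper then invokes \cite[Corollary~6.4]{priesCurrent}, which packages the clutching-plus-dimension argument for components of the $p$-rank~$0$ stratum of $\CM_d$ carrying a prescribed generic Newton polygon. With the ``maximal for its $p$-rank'' observation inserted, your inductive scheme can be made to work; without it, step~(c) as written cannot be completed.
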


\begin{proof}
From the basic locus for $M[17]$ when $m=7$ in Section \ref{sec_table},
there exists a curve of genus $d=6$ 
defined over $\overline{\FF}_p$ whose $p$-divisible group is isogenous to $G_{1, d-1} \oplus G_{d-1,1}$.
By Proposition \ref{indecomp}, this curve is smooth. 
The Newton polygon for $G_{1, d-1} \oplus G_{d-1,1}$ is the lowest Newton polygon in dimension $d$ with $p$-rank $0$.
Thus there is at least one component of the $p$-rank $0$ stratum of $\CM_d$ whose 
generic geometric point has $p$-divisible group isogenous to $G_{1, d-1} \oplus G_{d-1,1}$.
The result is then immediate from \cite[Corollary~6.4]{priesCurrent}.
\end{proof}

\subsection{A non-special family} \label{Snonsp} \mbox{}\\

Let QR (resp.\ QNR) be an abbreviation for quadratic residue (resp.\ non-residue).

\begin{notation} \label{Nnonspecial}
Let $m > 7$ be a prime such that $m \equiv 3 \bmod 4$.
Let $p \geq m(m-7)/2$ be a prime which is a QNR modulo $m$.
Let $N=(m-1)/2$.
Let $\alpha_m=(a_1, \ldots, a_N)$ be an ordering of the QRs modulo $m$.
\end{notation}

The triple $(m, N, \alpha_m)$ is a monodromy datum because $\sum_{i=1}^N a_i \equiv 0 \bmod m$. 

Consider the family $C=C(m,N,\alpha_m) \to U$ of curves defined as in \eqref{EformulaC}.
For $t \in U$, the genus of the curve $C_t$ is $g_m=(m-5)(m-1)/4$ by \eqref{Egenus}.
Recall that $Z(m,N,\alpha_m)$ is the closure in $\CA_{g_m}$ of the image of $C$ under the Torelli map, and
$S(m,N,\alpha_m)$ is the smallest Shimura subvariety in $\CA_{g_m}$ containing $Z(m,N,\alpha_m)$.

\begin{proposition}\label{P121}
Consider the monodromy datum $(m,N, \alpha_m)$ with $m, p, N, \alpha_m$ as in Notation \ref{Nnonspecial}.
There are explicitly computable constants $E(1), E(2) \in \NN$, with $E(1) > E(2)$ and $E(1)+E(2)=(m-5)/2$,
such that the $\mu$-ordinary Newton polygon $\nu_o$ and $\mu$-ordinary mod $p$ Dieudonn\'e module $N_o$
of the reduction modulo $p$ of $S(m,N,\alpha_m)$ 
are
\begin{eqnarray} \label{muordnonspecial}
\nu_o & = & (ord^{2E(2)} \oplus ss^{E(1)-E(2)})^{(m-1)/2},\\
N_o & = & (L^{2E(2)} \oplus N_{1,1}^{E(1)-E(2)})^{(m-1)/2}.
\end{eqnarray}
\end{proposition}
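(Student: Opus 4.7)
The plan is to apply Moonen's explicit formula from Section \ref{muordformula} after computing the signature and the orbit structure imposed by the QR/QNR hypotheses.

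First, I will compute $\cf(\tau_n)$ via \eqref{DMeqn}. Since $m\equiv 3\bmod 4$, $-1$ is a quadratic non-residue modulo $m$; together with $\alpha_m$ enumerating the QRs, this means the multiset $\{-na_i\bmod m\}_i$ consists of QRs when $n$ is a QNR and of QNRs when $n$ is a QR. Hence $\cf(\tau_n)$ takes only two values,
\[\cf_Q := -1 + \frac{1}{m}\sum_{k\text{ QNR}} k,\quad \cf_N := -1 + \frac{1}{m}\sum_{k\text{ QR}} k,\]
depending on whether $n$ is a QR or a QNR respectively, and I set $E(1):=\max(\cf_Q,\cf_N)$, $E(2):=\min(\cf_Q,\cf_N)$. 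Summing $\cf(\tau_n)$ over all $n\ne 0$ recovers $g_m$, yielding $E(1)+E(2)=(m-5)/2$. The Dirichlet class number formula for $\QQ(\sqrt{-m})$ identifies $\cf_Q-\cf_N$ with a positive multiple of the class number $h(-m)$, giving $E(1)>E(2)$; standard bounds on $h(-m)$ (e.g.\ from the Minkowski bound) force $E(2)\geq 1$ whenever $m>7$.

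Next, I analyze the $\sigma$-orbits on $\CT\setminus\{\tau_0\}$. Because $p$ is a QNR modulo $m$, multiplication by $p$ swaps QRs and QNRs, so every orbit $\co$ has even length and contains exactly $\#\co/2$ QRs and $\#\co/2$ QNRs. In particular $s(\co)=2$ uniformly, and \eqref{slope}--\eqref{multiplicity} give per-orbit slopes $0,\,1/2,\,1$ with multiplicities $\#\co\cdot E(2),\ \#\co(E(1)-E(2)),\ \#\co\cdot E(2)$. Summing over the $(m-1)/\#\co$ orbits recovers $\nu_o=(ord^{2E(2)}\oplus ss^{E(1)-E(2)})^{(m-1)/2}$.

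For the Dieudonn\'e module, I identify each factor $N_t(\co)$ from \eqref{Nomerge}. By construction $N_0(\co)$ has $V=0$ and $F$ acting as a cyclic shift on $\#\co$ basis vectors; a standard semi-linear descent (the $F$-fixed vectors form an $\FF_p$-subspace of dimension $\#\co$ whose $\bfp$-span is $N_0(\co)$) shows $N_0(\co)\simeq(\EE/\EE(V,F-1))^{\#\co}$, and dually $N_2(\co)\simeq(\EE/\EE(F,V-1))^{\#\co}$, so $N_0(\co)\oplus N_2(\co)\simeq L^{\#\co}$. For $N_1(\co)$, the alternating QR/QNR pattern along $\co$ forces $F^2=V^2=FV=VF=0$ with $a$-number $\#\co/2$; a BT$_1$-group scheme killed by $F^2,V^2$ of this dimension and $a$-number has no $\alpha_p$-factors, and the classification of BT$_1$ group schemes by Ekedahl--Oort type then forces $N_1(\co)\simeq N_{1,1}^{\#\co/2}$. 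Assembling these per orbit and summing yields $N_o=(L^{2E(2)}\oplus N_{1,1}^{E(1)-E(2)})^{(m-1)/2}$. The main obstacle I expect is this last identification: $F$ and $V$ act on $N_1(\co)$ by \emph{intertwined} cyclic shifts and do not preserve any obvious rank-$2$ decomposition, so one either invokes the EO-type classification or exhibits an explicit change of basis (e.g.\ starting from $e_{\tau_0}+e_{\tau_2}+\cdots+e_{\tau_{\#\co-2}}$, on which $F=V$, and generating the remaining $N_{1,1}$-summands from the $\FF_{p^{\#\co/2}}$-action on the space of such fixed elements).
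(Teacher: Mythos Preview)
Your approach coincides with the paper's: compute the signature via \eqref{DMeqn}, observe it takes only two values $c_1,c_2$ depending on the QR/QNR status of $n$, note that $p$ being a QNR forces every $\sigma$-orbit to split evenly between QRs and QNRs, and apply the formulas of Section~\ref{muordformula}.

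Two differences in execution deserve comment. For the strict inequality $E(1)>E(2)$, the paper uses the one-line parity observation that $c_1+c_2=(m-5)/2$ is odd, so $c_1\neq c_2$; your appeal to the Dirichlet class number formula is correct but much heavier than necessary (it tells you \emph{which} of $\cf_Q,\cf_N$ is larger, information the proposition does not require). Conversely, for the Dieudonn\'e module the paper simply invokes Example~\ref{Edieumod2}, which as stated treats only orbits of length~$2$; your treatment is more explicit here. The cleanest way to phrase your $N_1(\co)$ step is that the strict QR/QNR alternation along $\co$ forces $F^2=V^2=0$ and $FM=VM$ on $N_1(\co)$, so $N_1(\co)$ is superspecial and hence isomorphic to $N_{1,1}^{\#\co/2}$ --- this is more direct than the EO-classification argument you sketch. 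Finally, neither proof carefully justifies $E(2)\geq 1$ for $m>7$: the paper asserts $c_1,c_2\in\ZZ^{>0}$ without argument, and your Minkowski-bound sketch is not sharp enough to cover the smallest cases without supplementing by direct computation.
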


\begin{proof}
Let $n\in (\ZZ/m\ZZ)^*$.  Since the inertia type $\alpha_m$ consists of all the QRs,
by \eqref{DMeqn}, there exist $c_1, c_2\in {\mathbb Z}^{>0}$ such that
$\cf(\tau_n)=c_1$ when $n$ is a QR
and $\cf(\tau_n)=c_2$ when $n$ is a QNR.
Since $m \equiv 3 \bmod 4$, exactly one of $n$ and $-n$ is a QR.
So $g(\tau_n)=\cf(\tau_n)+\cf(\tau_{-n})=c_1+c_2=(m-5)/2$. 
Also $c_1\neq c_2$ since $(m-5)/2$ is odd.

Since $p$ is a QNR
in $(\ZZ / m\ZZ)^*$, each orbit $\co$ has the same number of 
QRs and QNRs.
By \S\ref{muordformula}, $s(\co)=2$, $E(0)=c_1+c_2$, $E(1)=\max \{ c_1, c_2\}$ and $E(2)=\min \{ c_1, c_2\}$.
By \eqref{slope}-\eqref{multiplicity}:
\begin{align*}
&\lambda(0)=0, \  m(\lambda(0)=(\#\co) \cdot E(2);\\
&\lambda(1)=1/2, \ m(\lambda(1))  = (\#\co)\cdot(E(1)-E(2)); \ {\rm and}\\
&\lambda(2)=1, \ m(\lambda(2))  = (\#\co)\cdot E(2).
\end{align*}
Hence, $\mu(\co)=(ord^{2E(2)} \oplus ss^{E(1)-E(2)})^{\#\co/2}$ and $\nu_o=\sum \mu(\co)=(ord^{2E(2)} \oplus ss^{E(1)-E(2)})^{(m-1)/2}$. 
Because $E(1)-E(2) \ne 0$, the multiplicity of the slope $1/2$ in $\nu_o$ is at least $m-1$.  

Since $s(\co)=2$ for any orbit $\co$ and $E(1)=\max \{ c_1, c_2\}$ and $E(2)=\min \{ c_1, c_2\}$ are independent of $\co$, 
then $N_o = (L^{2E(2)} \oplus N_{1,1}^{E(1)-E(2)})^{(m-1)/2}$ by Example \ref{Edieumod2}.
\end{proof}

For $m > 7$, the image of the Torelli morphism is not open and dense in $S(m,N,\alpha_m)$.
This makes it extremely difficult to determine the generic Newton polygon for the curve $C \to U$; 
In this case, a result of Bouw allows us to prove that the $\mu$-ordinary Newton polygon $\nu_o$ 
occurs for the curves in the family.

\begin{theorem}\label{last}
Consider the monodromy datum $(m, N, \alpha_m)$ with $m, p,N, \alpha_m$ as in Notation \ref{Nnonspecial}.
Consider the family $C(m,N,\alpha_m) \to U$ of curves of genus $g_m$ defined as in \eqref{EformulaC}. 
For $t$ in an open dense subset of $U$, the curve $C_t$ is smooth and has Newton polygon $\mu_o$ and mod $p$ Dieudonn\'e module $N_o$
as in \eqref{muordnonspecial}.
In particular, the Newton polygon only has slopes $0$, $1/2$, and $1$ and the multiplicity of the slope $1/2$ is at least 
$m-1 \geq 2\sqrt{g_m}$.
\end{theorem}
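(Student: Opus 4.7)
The plan is as follows. Smoothness is automatic: by the construction recalled in Section~\ref{prelim_curve}, every $C_t$ with $t\in U$ is smooth and geometrically irreducible of genus $g_m$, so only the assertions on the Newton polygon and mod $p$ Dieudonn\'e module require argument. Since the Torelli image of $U$ lies in the irreducible substack $Z(m,N,\alpha_m)\subset S(m,N,\alpha_m)\subset \CA_{g_m}$ carrying the $\ZZ[\mu_m]$-action, Theorem~\ref{thm_VW} forces $\nu(\mathrm{Jac}(C_t))\in \nu(B(\mu_m,\cf))$ for all $t$, and Proposition~\ref{P121} identifies $\nu_o$ as the unique maximal such polygon. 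Because the $\mu$-ordinary Newton (and Ekedahl--Oort) stratum is open in $S(m,N,\alpha_m)$ and $Z(m,N,\alpha_m)$ is irreducible, it suffices to exhibit a single $t_0\in U$ for which $\mathrm{Jac}(C_{t_0})$ realizes $(\nu_o, N_o)$.

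To produce such a $t_0$, the strategy is to invoke the theorem of Bouw on the generic $p$-rank of a family of $\mu_m$-Galois covers of $\PP$ with prescribed inertia type. For $(m,N,\alpha_m)$ as in Notation~\ref{Nnonspecial}, Bouw's formula yields, orbit by orbit, the same count as the Moonen formula of Section~\ref{muordformula} specialized to the Deligne--Mostow signature $\cf$ of Proposition~\ref{P121}: the generic $p$-rank of $C_t$ equals $(m-1)E(2)$, matching the $p$-rank of $\nu_o$. Because the weakly admissible condition (WA) of Section~\ref{sec_Kottwitz} forces $\nu(\mathrm{Jac}(C_t))\le \nu_o$ for every $t$, and because on each orbit $\co$ the equality of slope-$0$ multiplicity pins down $\mu(\co)=ord^{2E(2)}\oplus ss^{E(1)-E(2)}$ uniquely within its weakly admissible set (since $s(\co)=2$ leaves the middle slopes determined by the remaining height, by symmetry and condition (D)), the $p$-rank equality promotes itself to equality of Newton polygons: $\nu(\mathrm{Jac}(C_{t_0}))=\nu_o$. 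The corresponding $\EE$-module structure is then $N_o$ by Example~\ref{Edieumod2}, applied orbit-wise.

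The last assertion of the theorem is immediate from \eqref{muordnonspecial}: the multiplicity of the slope $1/2$ in $\nu_o$ equals $(E(1)-E(2))(m-1)\ge m-1$, since $E(1)>E(2)$ (as recorded in Proposition~\ref{P121}, using that $(m-5)/2$ is odd because $m\equiv 3\bmod 4$). The inequality $m-1\ge 2\sqrt{g_m}$ reduces to $(m-1)^2\ge 4g_m=(m-5)(m-1)$, i.e.\ to $m-1\ge m-5$.

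The main obstacle is the translation between Bouw's formula, stated in terms of local invariants of the $\mu_m$-cover at each branch point, and the Shimura-theoretic description coming from Moonen's formula for the $\mu$-ordinary element of $B(\mu_m,\cf)$. The reconciliation must be carried out orbit by orbit, and its success relies crucially on the symmetry of the QR/QNR partition built into Notation~\ref{Nnonspecial}: each orbit visits QRs and QNRs equally, $s(\co)=2$ uniformly, and the signature takes only the two values $c_1,c_2$. This simultaneously simplifies Bouw's output and rigidifies the ambient set $\nu(B(\mu_m,\cf))$ enough that matching the $p$-rank determines the full Newton polygon, and hence (together with Example~\ref{Edieumod2}) the mod $p$ Dieudonn\'e module.
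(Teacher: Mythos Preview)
Your proposal is correct and follows essentially the same route as the paper: invoke Bouw's theorem to show the maximal $p$-rank is attained on a dense open in $U$, then argue that attaining the maximal $p$-rank forces the Newton polygon to equal $\nu_o$ (and hence the Dieudonn\'e module to equal $N_o$). The paper phrases the second step more globally---since $\nu_o=ord^{(m-1)E(2)}\oplus ss^{(m-1)(E(1)-E(2))/2}$ has only slopes $0,1/2,1$, it is the minimal element (in the partial order) among all Newton polygons with its $p$-rank, so $\nu\le\nu_o$ together with equality of $p$-ranks forces $\nu=\nu_o$---whereas you do this orbit-by-orbit; both are valid. One small correction: your final appeal to Example~\ref{Edieumod2} does not justify that $\nu=\nu_o$ implies the Dieudonn\'e module is $N_o$; that step requires Moonen's theorem that the $\mu$-ordinary Newton stratum coincides with the maximal Ekedahl--Oort stratum, which you correctly invoked earlier in your setup.
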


\begin{proof}
Let $S_p$ denote the reduction modulo $p$ of $S(m, N, \alpha_m)$; it has 
$\mu$-ordinary Newton polygon $\mu_o$ by Proposition \ref{P121}. 
The Newton polygon $\mu_o$ is the lowest among all Newton polygons of the same $p$-rank.
Thus in $S_p$,
the maximal $p$-rank stratum agrees with the $\mu$-ordinary Newton polygon (and Dieudonn\'e module) stratum.  
In particular, the Jacobian of $C_t$ has Newton polygon $\nu_o$ if and only if it has maximal possible $p$-rank.
By \cite[Theorem 6.1]{Bouwprank}, for $p \geq m(N-3)=m(m-7)/2$, 
the maximal possible $p$-rank is achieved on an open dense subset of $U$.
\end{proof}

In the first few examples, the multiplicity of the slope $1/2$ in the Newton polygon
matches the lower bound $(m-1)/2$.

\begin{example}
	When $m=7$ and $a=(1,2,4)$, then $c_1=1$ and $c_2=0$. 
Thus, for $p \equiv 3,5,6 \bmod 7$, the Newton polygon is $ss^3$.
\end{example}

\begin{example}
	When $m=11$ and $a=(1,3,4,5,9)$, then $c_1=2$ and $c_2=1$. 
Thus, for $p \equiv 2,6,7,8,10 \bmod 11$, the Newton polygon is $ord^{10} \oplus ss^5$.
\end{example}

\begin{example}
	When $m=19$ and $a=(1,4,5,6,7,9,11,16,17)$, then $c_1=4$ and $c_2=3$. 
Thus, for $p \equiv 2,3,8,10,12,13,14,15,18 \bmod 19$, the Newton polygon is $ord^{54} \oplus ss^9$.
\end{example}

\section{Appendix: Bounding the number of irreducible components of the basic locus of simple Shimura varieties}
In this section, we study the basic locus of a certain type of unitary Shimura variety.  Under some natural
restrictions on the prime $p$, 
we prove that the number of irreducible components of the basic locus of its reduction modulo $p$ is unbounded as $p$ goes to $\infty$.

\subsection{Notation}

Let $E$ be a CM field, Galois over $\QQ$, and  $F$  its maximal totally real subfield. 
Recall from Section \ref{sec_Sh_GSp}, that $V=\QQ^{2g}$ has a standard symplectic form $\Psi$.
There is an action of $E$ on $V$ which is compatible with $\Psi$. In other words, if we view $V$ as an $E$-vector space, 
the symplectic form on $V$ naturally induces an $E/F$-Hermitian form $\psi$ on $V$.

Using the notation in Section \ref{sec_Sh_PEL}, we consider the PEL-Shimura datum $(H_B, h)$ for $B=E$.
In the following, we write $G=H_E$.   
Note that we have an exact sequence
\[1\rightarrow \Res^F_\QQ U(V,\psi)\rightarrow G\rightarrow \GG_m \rightarrow 1\]
where $U(V,\psi)$ is the unitary group over $F$ with respect to $\psi$. 
For example,  in the notation of Section \ref{sec_Sh_PEL}, when $E=K_m$ is a cyclotomic field, 
{for each embedding} $\tau:K_m\rightarrow \CC$, then 
the signature of $\psi$ at the real place of $F$ induced by $\tau$ is $(\frf(\tau), \frf(\tau^*))$.
Note that the reflex field of the Shimura datum $(G,h)$ is contained in $E$. 

For any rational prime $p$, we fix a prime $\p$ of $E$ above $p$. In the following, with some abuse of notation, we still denote by $\p$ the corresponding primes of $F$ and of the reflex field.  
We write ${\mathbb A}_f$ for the finite adeles of $\mathbb Q$.
Let $K\subset G(\bA_f)$ be an open compact subgroup, and denote by $\Sh (G,h)$  the associated Shimura variety of  level $K$.
Assume $p$ is unramified in $E$, and $K=K_pK^p\subset G(\QQ_p)\times G(\bA^p_f)$ with $K_p$ hyperspecial.\footnote{If $p$ is unramified in $E$, then $G_{\QQ_p}$ admits a smooth reductive model $\mathcal G$ over $\ZZ_p$, and $K_p={\mathcal G}(\ZZ_p)$ is hyperspecial.} Note that this assumption holds for any prime $p$ sufficiently large.
Then, we denote 
by $\cS$ the canonical integral model of $\Sh(G,h)$  at $\p$, and write $\cS_\p$ for the $\bmod$  $\p$ reduction of $\cS$,
and  $\cS_\p(\nu_b)$ for its basic locus.

\subsection{Main theorem}

\begin{theorem}\label{prop_irredcomp}
	Assume that the signature of the unitary group $U(V,\psi)$ is $(1,n-1)$ or $(n-1,1)$ at one real place of $F$ and is $(0,n)$ or $(n,0)$ at any other real place. 
	If $\p\mid p$ is inert in $E/F$, we further assume that $n$ is even.
	Then for any such prime $\mathfrak p$, the number of (geometrically) irreducible components of
	${\mathcal S}_\p(\nu_b)$ grows to infinity with $p$.
\end{theorem}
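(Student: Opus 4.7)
The strategy is to reduce the count of irreducible components of $\cS_\p(\nu_b)$ to a double-coset count on an inner form $I$ of $G$ via $p$-adic uniformization, and then to obtain unbounded growth in $p$ from a mass formula.

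First, by Rapoport--Zink uniformization of the basic locus, there is an isomorphism of formal schemes
\[\cS_\p(\nu_b)^{\wedge} \;\cong\; I(\QQ) \backslash \bigl(\mathcal{N} \times G(\bA_f^p)/K^p\bigr),\]
where $\mathcal{N}$ is the Rapoport--Zink space attached to the basic isocrystal with additional structure, and $I$ is the inner form of $G$ associated to the basic element. The assumption that the signature is definite at every archimedean place except one forces $I$ to be anisotropic modulo center at infinity; in particular $I(\QQ)$ is discrete in $I(\bA_f)$ and the double-coset sets in sight are finite. Passing to (top-dimensional) irreducible components then yields
\[\#\,\mathrm{Irr}(\cS_\p(\nu_b)) \;=\; \bigl|\,I(\QQ) \backslash \bigl(\mathrm{Irr}(\mathcal{N}) \times G(\bA_f^p)/K^p\bigr)\,\bigr|.\]

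I would then give an explicit description of $\mathrm{Irr}(\mathcal{N})$ in the two allowed local cases at $\p$. If $\p$ splits in $E/F$, the local factor of $\mathcal{N}$ at $\p$ is built from Lubin--Tate towers of signature $(1,n-1)$, and $\mathrm{Irr}(\mathcal{N})$ at $\p$ is parametrized by homothety classes of $\cO_{E_\p}$-lattices in a rank $n$ Hermitian space; this is the framework used by Harris--Taylor in \cite{HT} and refined in \cite{XZ2}. If $\p$ is inert in $E/F$ with $n$ even, I invoke the Vollaard--Wedhorn description \cite{vollaardwedhorn}, as extended beyond the quadratic imaginary case by Rapoport--Terstiege--Wilson and Howard--Pappas, identifying top-dimensional components of $\mathcal{N}_{\mathrm{red}}$ with generalized Deligne--Lusztig varieties indexed by self-dual vertex lattices in the local Hermitian space. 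In either case, $\mathrm{Irr}(\mathcal{N})$ is a homogeneous $J_b(\QQ_p)$-space with stabilizer an explicit open compact (parahoric) subgroup $K_p'\subset I(\QQ_p)$, so the count above collapses to a class number $h(I, K_p'K^p)$.

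To finish, I show $h(I, K_p'K^p) \to \infty$ as $p \to \infty$ with $K^p$ fixed by invoking a Gross/Kottwitz-type mass formula
\[\sum_{x \,\in\, I(\QQ) \backslash I(\bA_f)/K_p'K^p}\frac{1}{|\Gamma_x|}\;=\;\tau(I)\cdot\mu_p(K_p')^{-1}\cdot\bigl(\mu^p(K^p)\bigr)^{-1},\]
where $\Gamma_x$ denotes the finite arithmetic stabilizers, the local volume factor at $p$ is an explicit product of Euler-type terms of positive degree in $p$ (growing like $p^{\dim\mathcal{N}}$), and the factor away from $p$ together with $\tau(I)$ is independent of $p$. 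Coupled with a uniform bound on $|\Gamma_x|$, the unbounded growth of the right-hand side forces $h(I, K_p'K^p)\to\infty$. The main obstacle is ensuring that the bound on $|\Gamma_x|$ is genuinely uniform in $p$: $\Gamma_x$ is the torsion of an arithmetic subgroup of $I(\QQ)$ cut out by a level structure whose factor at $p$ varies with $p$, and one must argue that replacing the parahoric at $p$ does not allow new torsion elements to accumulate. A secondary technical point is verifying that the Vollaard--Wedhorn type description of $\mathcal{N}$ persists for general CM fields $E$ in the inert case, which is precisely the source of the parity hypothesis on $n$.
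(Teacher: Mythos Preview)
Your approach is essentially the same as the paper's: Rapoport--Zink uniformization reduces the component count to a double-coset cardinality for the inner form $I$, and a mass formula (the paper uses Gan--Hanke--Yu, \cite[Proposition~2.13]{GHY}, with explicit local factors $\lambda_p=\prod_{i=2}^n(q^{n-i+1}-1)$ in the split case and $\lambda_p=(q^n-1)/(q+1)$ in the inert case) shows this grows with $p$.

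Two small points are worth flagging. First, the ``main obstacle'' you identify is illusory: since $|\Gamma_x|\geq 1$ for every $x$, one has
\[
\sum_{x}\frac{1}{|\Gamma_x|}\;\leq\;\#\bigl(I(\QQ)\backslash I(\bA_f)/K_p'K^p\bigr),
\]
so unbounded growth of the mass already forces $h(I,K_p'K^p)\to\infty$ with no uniformity needed on the torsion. Second, the paper only asserts a \emph{lower bound} on the number of components by the class number (Proposition~\ref{prop_double_coset}), not an equality; in the inert case the identification of $\mathrm{Irr}(\mathcal{N})$ with $I(\QQ_p)/C_p$ requires the Deligne--Lusztig description, which the paper sketches directly for general $E/F$ rather than citing Rapoport--Terstiege--Wilson or Howard--Pappas. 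Neither point affects the validity of your outline.
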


\begin{remark}
	When $n$ is odd or when the signature has another form, the statement of Theorem \ref{prop_irredcomp} does not hold in general. For example, when the center $Z_G$ of $G$ is connected, Xiao and Zhu show that if the dimension of the basic locus is half the dimension of a Hodge type Shimura variety, then the number of its irreducible components is the same for all unramified primes \cite[Lemma 1.1.3, Theorem 1.1.4 (1), Remark 1.1.5 (2), Proposition 7.4.2]{XZ}. 
	This dimension requirement is satisfied for any unitary Shimura variety with the signature as in Theorem \ref{prop_irredcomp} and $n$ odd at inert prime $\p$. For more examples, see  \cite[Remark~4.2.11]{XZ}.
	
	Related material is in \cite{yumass}. 
\end{remark}

\begin{corollary}\label{cor_irredcomp}
	The statement of Theorem \ref{prop_irredcomp}
	holds true for any connected component of a product of finitely many unitary Shimura varieties satisfying the assumptions of Theorem \ref{prop_irredcomp}. 
\end{corollary}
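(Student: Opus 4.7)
The plan is to reduce Corollary \ref{cor_irredcomp} to Theorem \ref{prop_irredcomp} by exploiting the compatibility of the integral model, the Newton stratification, and the operation of taking components with products of PEL data.

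Let $(G_i,h_i)$, for $i=1,\ldots,k$, be PEL-type Shimura data satisfying the hypotheses of Theorem \ref{prop_irredcomp} at primes $\p_i\mid p$. The first step is to identify the geometric special fiber of the Shimura variety for $\prod_i(G_i,h_i)$ at $\p=(\p_i)_i$ with a product of the individual special fibers. This is built into the moduli interpretation: an abelian variety with a $\prod_iE_i$-action decomposes canonically as a product of abelian varieties with $E_i$-actions, and together with compatible polarizations and level structures this identifies the integral model of the product Shimura variety with $\prod_i\cS^{(i)}_{\p_i}$ up to a finite \'etale cover arising from the identification of similitude characters.

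Next, I would observe that the Newton stratification factors: the Newton polygon of a product of $p$-divisible groups is the union of the Newton polygons of the factors, and the basic element of $B\bigl(\prod_iG_{i,\QQ_p},\mu\bigr)$ is precisely the tuple of basic elements in each $B(G_{i,\QQ_p},\mu_i)$. Consequently,
\[\cS_\p(\nu_b)\;=\;\prod_{i=1}^k\cS^{(i)}_{\p_i}(\nu_{b,i}).\]
Over $\bfp$, any connected component $\cC$ of the product is of the form $\prod_i\cC_i$, where $\cC_i$ is a connected component of $\cS^{(i)}_{\p_i}$, so
\[\cC\cap\cS_\p(\nu_b)\;=\;\prod_{i=1}^k\bigl(\cC_i\cap\cS^{(i)}_{\p_i}(\nu_{b,i})\bigr).\]
Since the number of irreducible components of a product of finite-type schemes over an algebraically closed field is the product of the numbers of irreducible components of the factors, applying Theorem \ref{prop_irredcomp} to each factor (and invoking the transitivity of the Hecke action on connected components to distribute the lower bound uniformly across connected components) yields that the number of irreducible components of $\cC\cap\cS_\p(\nu_b)$ grows to infinity with $p$.

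The main delicate point I anticipate is the precise comparison in the first step between the Shimura datum for the product group $\prod_iG_i$ and the fibered product of the individual Shimura data over the common similitude factor. This is a well-understood issue in PEL theory: it only modifies the geometric picture by a finite quotient whose order is independent of $p$, and hence does not affect the asymptotic growth in $p$ of the number of irreducible components of the basic locus.
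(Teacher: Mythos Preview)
The paper states Corollary \ref{cor_irredcomp} without proof, treating it as an immediate consequence of Theorem \ref{prop_irredcomp}. Your proposal is therefore not competing with a written argument but supplying one, and the strategy you outline---decompose the integral model, the Newton stratification, and the set of connected components as products, then multiply irreducible-component counts---is correct and is almost certainly what the authors have in mind.

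Two small remarks. First, for the bare statement of the corollary (a literal product of Shimura varieties as schemes), you only need a single factor to satisfy the hypotheses of Theorem \ref{prop_irredcomp}: the number of irreducible components of the product is the product of the counts, and it suffices that one factor's count grows with $p$ while the others are nonzero (which they are, by nonemptiness of the basic locus). Requiring all factors to satisfy the hypotheses is harmless but not needed.

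Second, your discussion of the similitude character is not strictly required by the corollary as worded, but it is exactly what is needed for the application in Proposition \ref{basic}: there the relevant group is $H_{\QQ[\mu_m]}$, which is a fibered product of unitary similitude groups over a common $\GG_m$ rather than a direct product. Your observation that the discrepancy is controlled by a factor independent of $p$ is the right way to handle this; the point can be made precise by noting that the map from the Shimura variety for the direct product to that for the fibered product is finite of degree bounded independently of $p$ (governed by the component group of the quotient torus), so the asymptotic growth of the irreducible-component count transfers. The Hecke-transitivity step you invoke to pass from the total basic locus to a chosen connected component is likewise correct and necessary.
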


\begin{definition}\textbf{The group of self-isogenies and certain open compact subgroups.}\label{ISO}
	Fix a point $x$ in ${\mathcal S}_\p(\nu_b)$, and let $A_x$ denote the associated abelian variety, endowed with the
	additional PEL-structure. We write $I=I_x$ for the group of quasi-isogenies of $A_x$ which 
	are compatible with the $E$-action and the $\QQ$-polarization.
	Then $I$ is an inner form of the algebraic reductive group $G$. Furthermore, $I({\mathbb Q}_\ell)=G({\mathbb Q}_\ell)$ at any finite prime $\ell\not= p$, and $I(\RR)/\RR^*$ is compact (due to the positivity of the Rosati involution). 
	
	In the following, we construct an open compact subgroup $C_p$ of $I(\QQ_p)$ such that the mass of $C_pK^p$ gives a lower bound of the number of irreducible components of $\cS_\p(\nu_b)$.
	
	\begin{enumerate}
		\item
		Assume $\p$ is split in $E/F$. Then, $I(\QQ_p)$ decomposes as $I(\QQ_p)=\QQ_p^\times\prod_{v|p}I_v$,\footnote{As a convention in the appendix, if we write $G=H_1H_2$, it means that $H_1,H_2$ are subgroups of $G$ and every element $g$ in $G$ can be written as $h_1h_2$ for $h_i\in H_i$, $i=1,2$ with \emph{no} assumption that such decomposition is unique. For instance, we do not assume that $H_1\cap H_2=\{1\}$.} where $v$ runs through all places of $F$ above $p$. Furthermore, $I_v\cong\GL_{n}(F_v)$ for $v\neq \p$, and  $I_\p$ is isomorphic to $D^\times_{F_\p,n}$ the division algebra over $F_\p$ with invariant $1/n$. We define $C_p$ as the maximal compact subgroup of $I(\QQ_p)$ given by 
		\[C_p=\ZZ_p^\times \cO^\times_{D_{F_\p,n}}\prod_{v|p, v\neq \p}\GL_n(\cO_{F_v}),\] where $\cO^\times_{D_{F_\p,n}}\subset D^\times_{F_\p,n}$ is the subgroup of elements with norm in $\cO_{F_\p}^\times$. 
		(For more details, see for example \cite[Chapter II.1]{HT}). 
		\item
		Assume $\p$ is inert in $E/F$. 
		We generalize the discussion in \cite[\S 2]{vollaard}, where Vollaard treats the case $F=\QQ$. Here, we follow \cite{XZ2} and recall that $n$ is even in this case.

		Let $V^\bullet$ be the $n$-dimensional $E/F$-Hermitian space such that
		\begin{itemize}
			\item $V^\bullet\otimes_F \bA_{F,f}^\p\cong V\otimes_F \bA_{F,f}^\p$ as Hermitian spaces; (we fix such an isomorphism);
			\item $V^\bullet$ has signature $(n,0)$ or $(0,n)$ at all archimedean places (more precisely, we only change the signature by $1$ at the one indefinite place of $V$),
			\item $V^\bullet \otimes_F F_\p$ is a ramified Hermitian space over $E_\p/F_\p$. 
		\end{itemize}
		
		Fix an $\cO_E\otimes \ZZ_p$-lattice $\Lambda^\bullet_p\subset V^\bullet \otimes_{\QQ}\QQ_p$, such that the dual $\Lambda^{\bullet, \vee}_p$ of $\Lambda^\bullet_p$ with respect to the Hermitian form satisfies $\Lambda^\bullet_p\subset \Lambda^{\bullet,\vee}_p$ and $\Lambda^{\bullet,\vee}_p/\Lambda^\bullet_p\simeq \cO_E/\p$ (such lattice exists due to the above assumption on $V^\bullet$). Then $I$ is the unitary similitude group of $V^\bullet$\footnote{Step (2) of the inert case of the proof of Proposition \ref{prop_double_coset} contains a proof for this well-known fact.} and we define $C_p\subset I(\QQ_p)$ to be the stabilizer group of $\Lambda^\bullet_p$.
		
		
	\end{enumerate}
\end{definition}

\begin{proposition}\label{prop_double_coset}
	The number of irreducible components of ${\mathcal S}(\nu_b)_{\mathfrak p}$ is bounded below by the mass of $C_pK^p$,
	$$m(C_pK^p):=\# I({\mathbb Q})\backslash I({\mathbb A}_f) /C_pK^p.$$
\end{proposition}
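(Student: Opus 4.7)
The strategy is to apply the Rapoport--Zink $p$-adic uniformization of the basic locus. Under our unramified hypotheses, there is a natural isomorphism between the formal completion of $\cS_\p$ along $\cS_\p(\nu_b)$ and the double quotient
\[
I(\QQ)\backslash \bigl(\CM \times G(\bA_f^p)/K^p\bigr),
\]
where $\CM$ is the Rapoport--Zink formal scheme attached to the basic local datum at $\p$, the group $I(\QQ)$ acts on $\CM$ through the natural action of $I(\QQ_p)$, and we identify $I(\bA_f^p)$ with $G(\bA_f^p)$ on the prime-to-$p$ part. Passing to top-dimensional irreducible components on both sides yields
\[
\mathrm{Irr}(\cS_\p(\nu_b)) \cong I(\QQ)\backslash\bigl(\mathrm{Irr}(\CM^{\rm red}) \times G(\bA_f^p)/K^p\bigr).
\]

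The heart of the argument will be to produce a surjective $I(\QQ_p)$-equivariant map
\[
\Phi\colon \mathrm{Irr}(\CM^{\rm red}) \twoheadrightarrow I(\QQ_p)/C_p.
\]
Granting such a $\Phi$, taking $I(\QQ)$-quotients on both sides gives
\[
\# \mathrm{Irr}(\cS_\p(\nu_b)) \geq \# I(\QQ) \backslash \bigl( I(\QQ_p)/C_p \times G(\bA_f^p)/K^p \bigr) = m(C_pK^p),
\]
which is the desired lower bound.

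In the split case, $\CM$ decomposes as a product over the places $v\mid p$ of $F$ together with a central $\QQ_p^\times$-factor. For $v \neq \p$ the local factor is zero-dimensional with point set $\GL_n(F_v)/\GL_n(\cO_{F_v})$, and for $v = \p$ it is a Drinfeld-type tower whose set of top-dimensional irreducible components is naturally a torsor under $D^\times_{F_\p,n}/\cO^\times_{D_{F_\p,n}}$, as in the Harris--Taylor analysis \cite{HT}. The resulting projection is exactly $\Phi$. In the inert case with $n$ even, one uses the Bruhat--Tits stratification of $\CM^{\rm red}$ that generalizes \cite{vollaardwedhorn} from $F = \QQ$ to arbitrary totally real $F$, as developed in \cite{XZ2}: the top-dimensional strata are indexed by the $I(\QQ_p)$-translates of the lattice $\Lambda^\bullet_p$, whose stabilizer is exactly $C_p$.

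The main obstacle is establishing the surjectivity and $I(\QQ_p)$-equivariance of $\Phi$, which reduces to intrinsically identifying $C_p$ inside the Rapoport--Zink framework. This is also where the parity hypothesis in the inert case is essential: for $n$ even, the modified Hermitian space $V^\bullet$ and the lattice $\Lambda^\bullet_p$ exist globally with stabilizer $C_p$, whereas for $n$ odd, local--global obstructions on the Hasse invariant prevent the analogous construction, and the structure of the basic locus differs qualitatively.
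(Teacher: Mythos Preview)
Your overall framework---Rapoport--Zink uniformization followed by showing that the set of top-dimensional irreducible components of the RZ space surjects $I(\QQ_p)$-equivariantly onto $I(\QQ_p)/C_p$---is exactly what the paper does. The difference lies in how the two cases are actually handled.

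In the split case your description is off. The paper observes, via a count of Newton polygons and Hamacher's codimension theorem, that the basic locus is \emph{zero-dimensional}; the relevant local RZ factor at $\p$ is of Lubin--Tate type (formally smooth of dimension $n-1$, but with a discrete underlying reduced scheme), not a Drinfeld upper half-space. Your phrase ``Drinfeld-type tower whose set of top-dimensional irreducible components\ldots'' suggests a positive-dimensional stratified object, which is the wrong picture here. The correct statement is simply that the points of the reduced RZ space form an $I(\QQ_p)$-set on which $C_p$ is the stabilizer of the base point (the Dieudonn\'e lattice of $A_x$), which gives the surjection $\mathrm{Irr}\to I(\QQ_p)/C_p$ immediately.

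In the inert case you invoke the Bruhat--Tits stratification of Vollaard--Wedhorn and its generalization in \cite{XZ2}; this is legitimate, but the paper does not merely cite---it carries out the construction explicitly. It builds an auxiliary zero-dimensional Shimura variety for $GU(V^\bullet)$, exhibits a Deligne--Lusztig variety $DL$ parametrizing certain rank-$(n/2+1)$ submodules of $H_1^{\rm dR}(A^\bullet)_{\tau_0}$, constructs from each such $M$ a sub-Dieudonn\'e module of $D(A^\bullet)$ defining a point of $\cS_\p(\nu_b)$, and then verifies via a dimension count ($\dim DL = n/2-1 = \dim \cS_\p(\nu_b)$) that this produces a full irreducible component whose stabilizer in $I(\QQ_p)$ is $C_p$. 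Your outline skips this content entirely.

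Finally, your explanation of why $n$ must be even in the inert case is not the operative one. The issue is not a local--global Hasse-invariant obstruction to constructing $V^\bullet$; rather, for $n$ odd at an inert prime the basic locus has half the dimension of the Shimura variety, and by Xiao--Zhu the number of its irreducible components is then \emph{independent of $p$} (see the Remark following Theorem~\ref{prop_irredcomp}). So the mass bound, while it may still hold as an inequality, cannot grow with $p$, and the parity hypothesis enters through the geometry of the stratification rather than through obstructions to globalizing $V^\bullet$.
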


\begin{proof}
	By Rapoport and Zink's $p$-adic uniformization theorem \cite[Theorem 6.30]{RZ}, to prove the statement it suffices to show that the irreducible components (of a subset) of the Rapoport--Zink space $RZ$ {are} indexed by $I(\QQ_p)/C_p$.
	
	Assume $\p$ is split in $E/F$. 
	By Theorem \ref{thm_VW}, there are $n$ possible Newton polygons on $\cS_\p$ and hence by \cite[Theorem 1.1]{hamacher}, the basic locus is $0$-dimensional. Note that the group $C_p$ is the stabilizer of the Dieudonn\'e lattice of $A_x$. Hence $I(\QQ_p)$ acts on $RZ$ with stabilizer $C_p$ and the number of irreducible components is bounded below by $I(\QQ_p)/C_p$.
	
	Assume $\p$ is inert in $E/F$.
	For $F=\QQ$, {the statement} is \cite[Theorem 5.2 (2), Prop. 6.3]{vollaardwedhorn}. 
	For $F$ totally split at $p$, the statement is proved in \cite{XZ2}. Here, we sketch a proof that generalizes \cite{vollaardwedhorn} and \cite{XZ2}. The proof is in three steps. 
	\begin{enumerate}
		\item Construct a $0$-dimensional Shimura variety $\Sh (GU(V^\bullet))$ of level $C_pK^p$, parametrizing abelian varieties $A^\bullet$ of dimension $[F:\QQ]n$  with $\cO_E$-action and polarization $\lambda^\bullet$ satisfying Kottwitz's determinant condition. Here we assume that the polarization $\lambda^\bullet$ admits the same polarization type as in $\Sh (G,h)$ outside $\p$; at $\p$ we assume that $(\ker \lambda^\bullet)[p^\infty]=(\ker \lambda^\bullet)[\p]$ and that the latter is a finite flat group scheme of order $\#\cO_E/\p$.
		To describe the Kottwitz's condition, let $\Phi\subset \hom (E,\CC)$ be the subset of $\tau$ such that the signature $(\cf(\tau),\cf(\tau^*))$ of $V^\bullet$ is $(n,0)$. Note that $\Phi\sqcup \Phi^*=\hom(E,\CC)$. Kottwitz's determinant condition says that the characteristic polynomial of the action $b\in \cO_E$ on $\Lie(A^\bullet)$ is given by $\prod_{\tau\in \Phi}(x-\tau(b))^n$. 
		\item We will
		construct a Deligne--Lusztig variety $DL$ and, for a fixed $A^\bullet$ as in (1),  a family of abelian varieties $A$ in $\cS_\p(\nu_b)$ together with a universal isogeny from $A^\bullet$ parametrized by $DL^{\perf}$, the perfection of $DL$. 
		
		Let $\tau_0$  denote the unique indefinite real place for $U(V)$ and let $f$ denote the inertia degree of $\p$ in $F$. Consider the $\sigma$-orbit $\co_{\tau_0}=\{ \tau_0, \sigma\tau_0, \cdots, \sigma^f\tau_0=\tau_0^*,\sigma\tau_0^*, \cdots, \sigma^{f-1}\tau_0^*\}$. For $\tau\in\co_{\tau_0}$,
			define $\widetilde{\Frob}^f=F_f\circ\cdots F_1$, where $F_i:H_1^{\rm dR}(A^\bullet)_{\sigma^{i-1}\tau_0}\rightarrow H_1^{\rm dR}(A^\bullet)_{\sigma^i \tau_0}$ is equal to $\Frob$ if $\sigma^{i-1}\tau\in \Phi$, and to $\Ver^{-1}$ otherwise.\footnote{We have that $\Ver:H_1^{\rm dR}(A^\bullet)_{\sigma\tau}\rightarrow H_1^{\rm dR}(A^\bullet)_{\tau}$ is invertible if and only if $\tau\notin \Phi$.}
			Then, there exists a submodule $M\subset H^{\rm dR}_1(A^\bullet)_{\tau_0}\otimes \bar{\FF}_p$ of rank $n/2+1$ satisfying the condition 
			\[
			\widetilde{\Frob}^f((M^{(p^f)}))^\perp\subset M,
			\] 
			where $^\perp$ is taken with respect to the pairing \[\langle-,-\rangle^\bullet: H_1^{\rm dR}(A^\bullet)_{\tau_0}\times H_1^{\rm dR}(A^\bullet)_{\tau_0^*}\rightarrow \bar{\FF}_p.\footnote{For the totally split case, see \cite{XZ2}. To prove that such a submodule $M$ exists, one can argue by induction on $n$ as in \cite{XZ2}.}\]
	The Deligne--Lusztig variety $DL$ is the moduli of all submodules $M$ satisfying the above conditions.
		
		To construct $A$ over $DL^{\perf}$, we use covariant Dieudonn\'e theory and by a theorem of Gabber (see for instance \cite[Theorem D]{Lau}), we only need realize the Dieudonn\'e module of $A$ as a sub-Dieudonn\'e module of $D(A^\bullet)$.

		We define $D(A)_\tau\subset D(A^\bullet)_\tau$ as follows.
		
		\begin{enumerate}
			\item For $\tau\notin \co_{\tau_0}$:
			we define $D(A)_\tau=D(A^\bullet)_\tau$  if $\tau\notin \Phi$, and $D(A)_\tau=pD(A^\bullet)_\tau$  if $\tau\in \Phi$.
			\item For $\tau\in\co_{\tau_0}$:		
			let $\tilde{M}$ be the preimage of $M$ in $D(A^\bullet)_{\tau_0}$ under the $\bmod$ $p$ map, and $\tilde{M}^*$ the preimage of $M^\perp$ in  $D(A^\bullet)_{\tau_0^*}$.
			We define $D(A)_{\tau_0^*}=\Frob(\tilde{M})$ and $D(A)_{\tau_0}=\Ver^{-1}(\tilde{M}^*)$. For $\tau=\sigma^i\tau_0$, where $1\leq i\leq f$ (resp. $f+1\leq i\leq 2f$), we define $D(A)_\tau=\widetilde{\Ver}D(A)_{\sigma\tau}$ inductively from $\tau_0^c=\sigma^f\tau_0$ (resp. $\tau_0=\tau_0^{2f}$), where $\widetilde{\Ver}=\Ver$ if $\tau\notin \Phi$ and $\widetilde{\Ver}=p^{-1}\Ver$ otherwise.
		\end{enumerate}
		
		Note that by definition the submodule $D(A)$ of $D(A^\bullet)$ is invariant under $\Frob$ and $\Ver$. Hence, it is a sub-Dieudonn\'e module.\footnote{One may use the non-emptiness of basic locus due to Wedhorn and Viehmann to prove the existence of such $M$. The argument goes the reverse way: let $A$ be the abelian variety with $\nu(A)=\nu_b$. As in \cite{vollaard}, one can construct an $\Frob$-invariant lattice from $D(A)$ and this lattice recovers $\Lambda_p^\bullet$.}
		Furthemore, the abelian variety $A$ inherits a polarization and additional PEL-structures from those of $A^\bullet$, and satisfies Kottwitz's determinant condition.

		\item Show that step (2) constructs an irreducible component of $\cS_\p(\nu_b)$, or similarly of $RZ$. Indeed, this can be proven by showing that the image of $DL$ has the correct dimension. On one hand, by the same argument as in the proof of \cite[Proposition 2.13]{vollaard} (replacing the $p$-Frobenius by the $q$-Frobenius), the dimension of the Deligne--Lusztig variety $DL$ in step (2) is $n/2-1$. On the other hand, by Theorem \ref{thm_VW}, the $\mu$-ordinary Newton polygon $\nu_o(\co_{\tau_0})$ 
		has break points 
		\[(2f,f), \ (2f(n-1), (n-1)f-1), \ {\rm and} \ (2fn, fn).\] 
		Hence all possible non-supersingular Newton polygons are in one-to-one correspondence with integer points with abscissa $2ft$, for some $t\in \ZZ\cap [1,n/2]$. In particular,   by \cite[Theorem 1.1]{hamacher} (combined with Theorem \ref{thm_VW}) the basic locus (i.e., the supersingular locus) has codim $n/2$, and thus dimension $n/2-1$. 
	\end{enumerate}
	
	To conclude, we observe that $C_p$ is the stabilizer of this irreducible component under  the action of  $I(\QQ_p)$ on $RZ$ arising from its natural action on the associated Dieudonn\'e modules.
\end{proof}

\begin{proof}[Proof of Theorem \ref{prop_irredcomp}]
	By Proposition \ref{prop_double_coset}, the theorem follows once we provide an asymptotic lower bound for $m(C_pK^p)$ which grows to infinity with $p$.
	By \cite[Proposition 2.13]{GHY},\footnote{A mass formula for the stabilizer of a maximal lattice in a quadratic or Hermitian space over a totally real field was first proved by Shimura in \cite{shimura, sh3, sh2}.} 
	$$m(C_pK^p)= c\cdot \lambda_S$$
	where  $\lambda_S=\prod_{p\in S} \lambda_p$,
	for $\lambda_p$ an explicit local factor at $p$ and $S$ the set of finite places $v$ of $\QQ$ where $I_v$ is not isomorphic to $G_v$, and 
	$c={2^{-(n[F:\QQ]+1)}}c'\cdot L(M_I)\cdot \tau(I),$ where 
	\begin{itemize}
		\item $M_I$ is a motive of Artin--Tate type attached to $I$ by Gross, 
		\item $L(M_I)$ is the value of the $L$-function of $M_I$ at $0$, 
		\item $\tau(I)$ is the Tamagawa number of $I$, and 
		\item $c'$ depends only on the non-hyperspecial piece of the level $K$.
	\end{itemize}
	
	Note that in our case $S=\{p\}$ (see Section \ref{ISO}).
	We claim that $c$ is independent of $p$. Indeed, the constant $L(M_I)$ only depends on the quasi-split inner form of $I$ over $\QQ$, which is independent of $p$; by \cite{Kottwitz88}, $\tau(I)$ is independent of $p$ because the center of the neutral connected component of the Langlands dual group of $I$ is independent of $p$; 
	since $K$ is hyperspecial at $p$, then $c'$ is independent of $p$.

	Hence, to conclude, it suffices to prove that the local factor $\lambda_p$ is unbounded as $p$ grows to infinity.
	In  \cite[Formula (2.12)]{GHY}, the local factor $\lambda_p$ is explicitely computed as
	$$\lambda_p=\frac{p^{-N(\overline{G}_p)}\cdot \# \overline{G}_p({\mathbb F}_p)}{{p^{-N(\overline{I}_p)}\cdot \# \overline{I}_p({\mathbb F}_p)}},$$
	where, for $H=G,I$, the group $\overline{H}_p$ denotes the maximal reductive quotient of
	the special fiber of $H_p$, and $N(\overline{H}_p)$ denotes the number of positive roots of $\overline{H}_p$ over $\overline{\mathbb F}_p$. Note that the integral structure of $I_p$ is given by $C_p$.
	
	Assume $p$ is split in $E/F$.  Then, $G$ and $I$ only differ at $\p$. More precisely, $G_{\ZZ_p}=\GG_m\prod_{v|p} G_v$ and $G_v\cong I_v$ for $v\neq \p$. At $\p$, the group $C_\p=\cO^\times_{D_{F_\p,n}}\subset I_\p$ is Iwahori and 
	$\overline{I}_\p$ modulo $\GG_m$ is a totally non-split torus of rank $n-1$. More precisely, $\overline{I}_\p$ is the multiplicative group of the degree $n$ extension of $\cO_F/\p$. 
	Hence, for  $q=\# \cO_F/\p$, we have
	\[
	\lambda_p=\frac{q^{-N(\GL_n)}\# \GL_n(\FF_q)}{\# \overline{I}_\p(\FF_q)}=\frac{q^{(1-n)n/2}\prod_{i=1}^n(q^n-q^{i-1})}{q^n-1}=\prod_{i=2}^n(q^{n-i+1}-1). 
	\]

	Assume $\p$ is inert in $E/F$.  In  Lemma \ref{lem_inert_L} below, we verify that the $\cO_{E}\otimes \ZZ_p$-lattice $\Lambda_p^\bullet$ is maximal.
	Hence, the computation in \cite[\S 3, Table 2]{GHY} applies for
	$n$ even.  For $q=\# \cO_F/\p$, we have \[\lambda_p=(q^{n}-1)/(q+1).\] 
\end{proof}

\begin{lemma}\label{lem_inert_L}
	Maintaining the same assumptions as in Theorem \ref{prop_irredcomp}, suppose  $\p$ is inert in $E/F$ and $n$ is even. 
	Then the $\cO_{E}\otimes \ZZ_p$-lattice $\Lambda_p^\bullet$ is maximal. More precisely, let $\HH$ be the split rank $2$ Hermitian space over $E_v$ with a standard basis $\{e,f\}$ and let $\Delta$ be the maximal lattice $\cO_{E,v}e\oplus \cO_{E,v}f$.
	
	\begin{enumerate}
		\item if $v\neq \p$, then $\Lambda_v^\bullet\cong \Delta^n$ as Hermitian lattices.
		\item if $v=\p$, then, as Hermitian lattices, $\Lambda_v^\bullet\cong \Delta^{n-1}\oplus \cO_D$, where $\cO_D$ is the maximal order of the unique quaternion algebra over $F_v$.\footnote{More precisely, we equip $D$ with the following Hermitian form. As an $E_v$ vector space, write $D=E_v+zE_v$ such that $z^2=\alpha:=\disc(V^\bullet\otimes_E E_v)$ and $xz=zx^c$ for $x\in E_v$. Then the Hermitian form is given by $\langle x_1+zx_2, y_1+zy_2 \rangle=x_1y_1^c-\alpha x_2y_2^c$}
	\end{enumerate}
	
\end{lemma}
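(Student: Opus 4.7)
\emph{Proposal.} The plan is to argue place-by-place for $v \mid p$, using the standard classification of Hermitian lattices over unramified quadratic extensions of local fields. First, I would identify the local Hermitian space $V^\bullet \otimes_F F_v$ for each such $v$. For $v \ne \p$, the defining isomorphism $V^\bullet \otimes \bA_{F,f}^\p \cong V \otimes \bA_{F,f}^\p$ from Definition~\ref{ISO} specialises to $V^\bullet \otimes F_v \cong V \otimes F_v$. Hyperspeciality of $K$ at $p$ forces $V \otimes F_v$ to admit a self-dual $\cO_{E,v}$-lattice, hence to be the split Hermitian space, i.e.\ an orthogonal sum of copies of $\HH$. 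For $v = \p$, by the construction of $V^\bullet$ in Definition~\ref{ISO} the space $V^\bullet \otimes F_\p$ is non-split; since $n$ is even, it decomposes orthogonally as $\HH^{(n-2)/2}$ plus the unique anisotropic Hermitian plane, which I will denote $\HH^{-}$.

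Next, I would exploit the duality condition $\Lambda^{\bullet,\vee}_p/\Lambda^\bullet_p \simeq \cO_E/\p$ componentwise. Since the quotient is supported only at the prime $\p$, the lattice $\Lambda_v^\bullet$ is self-dual for every $v \mid p$ with $v \ne \p$, while $\Lambda_\p^{\bullet,\vee}/\Lambda_\p^\bullet \cong \cO_E/\p$. By Jacobowitz's classification, self-dual Hermitian lattices in the split space are unique up to isomorphism and equal to an orthogonal sum of copies of $\Delta$, which establishes part~(1).

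For part~(2), I would exhibit the identification $\HH^{-} \cong D$ explicitly. As an $E_\p$-vector space, the unique quaternion algebra $D$ over $F_\p$ decomposes as $D = E_\p \oplus z E_\p$, where $z^2 = \alpha$ is a representative of the nontrivial class in $F_\p^\times / N_{E_\p/F_\p}(E_\p^\times)$ and $x z = z \bar x$ for $x \in E_\p$. The Hermitian form $\langle x_1 + z x_2,\, y_1 + z y_2 \rangle = x_1 \bar y_1 - \alpha x_2 \bar y_2$ realises $D$ as $\HH^{-}$, and a direct computation shows $\cO_D^\vee/\cO_D \cong \cO_E/\p$. Combining with $\Delta$'s in the split part, the model lattice has precisely the prescribed discriminant. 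Finally, one checks that any $\cO_{E,\p}$-lattice in the non-split Hermitian space whose dual/lattice quotient is $\cO_E/\p$ is automatically maximal (any proper overlattice would be self-dual, contradicting non-splitness of $V^\bullet \otimes F_\p$), and the classification of maximal Hermitian lattices over the unramified quadratic extension $E_\p/F_\p$ then pins $\Lambda_\p^\bullet$ down up to isomorphism.

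The main obstacle, and also the most delicate input, is part~(2): the identification of the anisotropic Hermitian plane over $E_\p/F_\p$ with the quaternion order $\cO_D$ equipped with its natural $E_\p$-Hermitian structure, together with the verification that the resulting lattice has the right discriminant and is the unique maximal one. Part~(1) then follows with little additional work once $V^\bullet \otimes F_v$ is recognised as the split Hermitian space.
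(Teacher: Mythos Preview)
Your proposal is correct and reaches the same conclusion by essentially the same structure-theoretic input, but the organization differs from the paper's. The paper argues more economically: it picks any maximal lattice $\Lambda$ containing $\Lambda_p^\bullet$, invokes the integral Witt decomposition for maximal Hermitian lattices (Shimura, \emph{Arithmetic of Quadratic and Hermitian Forms}, Chapter~1) to read off that $\Lambda^\vee/\Lambda \simeq \cO_E/\p$, and then observes that since $\Lambda_p^\bullet \subseteq \Lambda \subseteq \Lambda^\vee \subseteq \Lambda_p^{\bullet,\vee}$ with both outer quotients of the same length, one must have $\Lambda = \Lambda_p^\bullet$. The explicit shapes in parts (1) and (2) are then simply the Witt decomposition of that maximal lattice at each place. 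Your route---identifying the local Hermitian spaces, writing down the model lattice $\Delta^{n/2}$ or $\Delta^{n/2-1}\oplus\cO_D$, checking its discriminant, and appealing to Jacobowitz's uniqueness---is the same argument unpacked, with the advantage of making the quaternionic model for the anisotropic plane explicit; the paper's version has the advantage of handling maximality and the decomposition in one stroke without a separate case analysis.
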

\begin{proof}
	Let $\Lambda$ be a maximal lattice containing   $\Lambda^\bullet_p$. We use integral Witt decomposition for the maximal lattice $\Lambda$ to check that $\Lambda$ itself satisfies the duality condition $\Lambda^\vee/\Lambda\simeq \cO_E/\p$. Hence, $\Lambda=\Lambda^\bullet_p$ and it is maximal. 
	(For definition and results on the integral Witt decomposition, we refer to \cite[Chapter 1]{shimura}. Also,  note that since $p$ is unramified in $E/\QQ$, the inverse of the different ideal $\fd^{-1}$ is relatively prime to $p$.)  
\end{proof}

\bibliographystyle{amsplain}
\bibliography{npsecondbib}

\end{document}